\newtheorem{theorem}{Theorem}[section]
\newtheorem{fact}[theorem]{Fact}
\newtheorem{lemma}[theorem]{Lemma}
\newtheorem{claim}[theorem]{Claim}
\newtheorem{proposition}[theorem]{Proposition}
\newtheorem{introtheorem}{Theorem}
\newtheorem{introcon}[introtheorem]{Conjecture}
\theoremstyle{definition}
\newtheorem{definition}[theorem]{Definition}
\newtheorem{example}[theorem]{Example}
\newtheorem{non-example}[theorem]{Non-Example}
\newtheorem{convention}[theorem]{Convention}
\newtheorem{notation}[theorem]{Notation}
\newtheorem{assumption}[theorem]{Assumption}
\theoremstyle{remark}
\newtheorem{remark}[theorem]{Remark}
\newcommand{\rk}{\operatorname{rk}}
\newcommand{\acl}{\operatorname{acl}}
\newcommand{\cb}{\operatorname{Cb}}
\begin{document}

\thanks{Partially supported by the Fields Institute for Research in Mathematical Sciences; NSF grant DMS 1800692; ISF grant No. 555/21; a BGU Kreitman foundation fellowship; and a UMD Brin postdoc.}

\begin{abstract} We prove the higher dimensional case of the o-minimal variant of Zilber's Restricted Trichotomy Conjecture. More precisely, let $\mathcal R$ be an o-minimal expansion of a real closed field, let $M$ be an interpretable set in $\mathcal R$, and let $\mathcal M=(M,...)$ be a reduct of the induced structure on $M$. If $\mathcal M$ is strongly minimal and not locally modular, then $\dim_{\mathcal R}(M)=2$. As an application, we prove the Zilber trichotomy for all strongly minimal structures interpreted in  the theory of compact complex manifolds.
\end{abstract}

\subjclass{Primary 03C45; Secondary 03C64}

\title{The O-minimal Zilber Conjecture in Higher Dimensions}
\author{Benjamin Castle}
\address{Department of Mathematics, University of Maryland College Park}
\email{bcastle@berkeley.edu}
\maketitle

\section{Introduction}\label{S: intro}

This is the next paper in a series starting with \cite{CasACF0} and \cite{CaHaYe}. In \cite{CasACF0}, we proved the Zilber trichotomy for structures definable in ACF$_0$. In \cite{CaHaYe}, we gave an axiomatic framework for the techniques in \cite{CasACF0}, and deduced analogs for ACF$_p$ and ACVF. In this paper, we use the same axiomatic setting to study an o-minimal variant. Our main result is:

\begin{introtheorem}\label{T: intro} Let $\mathcal R$ be an o-minimal expansion of a real closed field. Let $\mathcal M=(M,...)$ be a non-locally modular strongly minimal $\mathcal R$-relic. Then $\dim_{\mathcal R}(M)=2$.
\end{introtheorem}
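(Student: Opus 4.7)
The plan is to deduce the theorem from the axiomatic framework of \cite{CaHaYe} by verifying its axioms in the o-minimal setting. The inequality $\dim_{\mathcal R}(M) \geq 2$ follows from the (previously known) fact that a strongly minimal reduct of a one-dimensional o-minimal structure is always locally modular---a version of the Hasson--Onshuus--Peterzil classification. The bulk of the work is the upper bound $\dim_{\mathcal R}(M) \leq 2$, which is the ``higher dimensional'' case promised in the title.

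For the upper bound, I would argue by contradiction: assume $n := \dim_{\mathcal R}(M) \geq 3$. Non-local modularity of $\mathcal M$ yields a two-dimensional $\mathcal M$-definable family $\{C_t : t \in T\}$ of plane curves in $M^2$, with generic pairwise intersection finite and generic fibers of $\mathcal M$-rank one. Viewed in $\mathcal R$, each $C_t$ is $\mathcal R$-definable of $\mathcal R$-dimension $n$, sitting inside the $2n$-dimensional ambient $M^2$. The axiomatic framework of \cite{CaHaYe} processes such data and produces, under suitable geometric axioms on the ambient structure, either a definable infinite field in $\mathcal M$ or a configuration directly contradicting non-local modularity. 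By Peterzil--Starchenko, any infinite field interpretable in $\mathcal R$ has $\mathcal R$-dimension $1$ or $2$, and the non-locally modular case pins it to the essentially unique algebraically closed field of $\mathcal R$-dimension $2$. A matching step then identifies $M$ with this field up to finite-to-finite correspondence, forcing $\dim_{\mathcal R}(M) = 2$ and contradicting $n \geq 3$.

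The hardest part will be the verification of the framework's axioms in the o-minimal setting, particularly when $\dim_{\mathcal R}(M)$ is large. The axioms abstract properties such as ``generic points admit a well-behaved local analytic structure,'' ``plane curves intersect in controlled ways,'' and ``projections of definable sets behave dimensionally as expected.'' In ACF these follow from classical algebraic geometry; here one must instead use cell decomposition, definable choice, and the theory of definable manifolds over $\mathcal R$, and accommodate the fact that the plane curves $C_t$ are ``thickly embedded''---their $\mathcal R$-dimension is $n > 2$, rather than $2$ as in the ACF setting. The technical heart should therefore be the construction of $\mathcal R$-definable maps from $M$ to a two-dimensional set that are compatible with the $\mathcal M$-structure, together with dimension-theoretic arguments forcing such maps to be generically finite-to-one precisely when $\mathcal M$ is non-locally modular.
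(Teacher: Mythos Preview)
Your plan has a genuine gap: it misreads what the axiomatic framework of \cite{CaHaYe} actually delivers in the higher-dimensional case, and it assumes an axiom that provably fails o-minimally.

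First, the framework does not ``produce a definable infinite field in $\mathcal M$'' in the higher-dimensional setting. Interpreting a field is the endpoint of the \emph{one}-dimensional case; in higher dimensions the argument is purely a dimension bound, obtained by comparing two estimates on the ramification locus of a suitable map. So the route ``interpret a field, invoke Peterzil--Starchenko to pin its dimension to $2$, then match $M$ with it'' is circular: interpreting the field is exactly Steps 3--4 of Peterzil's Conjecture, which presuppose $\dim_{\mathcal R}(M)=2$ rather than proving it.

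Second, the specific geometric axiom you would need to verify---purity of the ramification locus in the sense of \cite[Definition~6.16]{CaHaYe}---is \emph{false} in o-minimal expansions of fields. The complex squaring map, viewed semialgebraically, has ramification locus of o-minimal codimension $2$, not $1$. So ``verify the axioms and apply the machine'' cannot work as stated. What the paper actually does is prove a weaker, purely topological substitute (Proposition~\ref{ramification prop}): for a continuous definable finite-to-one map of definable $n$-manifolds, the ramification locus is either empty or of codimension at most $2$. This $2$ is what ultimately gives $\dim(M)\le 2$.

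Third, this substitute only applies to \emph{everywhere} finite-to-one maps, which kills the self-intersection trick used in \cite{CasACF0} to produce a ramification point. The bulk of the paper (Sections~\ref{group section}--\ref{construction section}) is a new construction: in the group case one intersects a curve with its negation rather than with itself; in general one builds a dualizable family with the ``strong identity property'' that is not self-dual, or else extracts a group configuration and reduces to the group case. Your proposal does not anticipate this obstacle or its resolution.
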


Recall (see \cite{CasHas}) that an $\mathcal R$-\textit{relic} is another structure whose definable sets are all interpretable sets in $\mathcal R$ (equivalently, an $\mathcal R$-interpretable structure equipped with a fixed interpretation in $\mathcal R$). Thus, Theorem \ref{T: intro} is a step toward proving the Zilber trichotomy for strongly minimal structures interpreted in o-minimal structures. As an application, we show that Theorem \ref{T: intro} implies the trichotomy for strongly minimal structures interpreted in compact complex manifolds (see Section \ref{S: CCM}). Below, we elaborate more on the meaning and significance of Theorem \ref{T: intro}.

\subsection{History and Peterzil's Conjecture} Recall that a structure $\mathcal M=(M,...)$ is \textit{strongly minimal} if every definable subset of $M$ is finite or cofinite (uniformly in families). A strongly minimal structure is \textit{locally modular} if it does not admit a two-dimensional family of plane curves (see Fact \ref{F: nlm via families}). Zilber famously conjectured that every non-locally modular strongly minimal structure interprets an algebraically closed field (this is Zilber's \textit{trichotomy conjecture}, see e.g. \cite[Theorem 3.1 an Conjecture B]{zil84}). Hrushovski found a family of counterexamples to Zilber's conjecture (\cite{Hrcon}). After Hrushovski's counterexamples, attention turned toward various restricted versions of the original conjecture, which turned out to be useful in applications (e.g. \cite{HrZil} and \cite{HrMordellLang}). 

Many papers (\cite{Martin}, \cite{Raz}, \cite{MaPi}, \cite{Ra}, \cite{KowRand}, \cite{HaSu}, \cite{CasACF0}, \cite{HaOnPi}, \cite{CaHaYe}) considered the trichotomy for strongly minimal relics of algebraically closed fields (and more generally algebraically closed valued fields). The case of pure ACF relics ultimately became known as the Restricted Trichotomy Conjecture, and was solved in \cite{CaHaYe}. Meanwhile, in 2005 Peterzil posed the trichotomy for strongly minimal relics of o-minimal structures (generalizing the Restricted Trichotomy in characteristic zero). Because all sufficiently rich o-minimal structures are locally expansions of real closed fields, it is reasonable to restrict Peterzil's question to the case of fields:\footnote{In fact, it is natural to wonder whether Peterzil's conjecture for expansion of fields directly implies Peterzil's conjecture for all o-minimal structures. We hope to prove this in a future paper.}

\begin{introcon}[Peterzil's Conjecture]\label{C: o-minimal conjecture} Suppose $\mathcal R$ is an o-minimal expansion of a real closed field, and $\mathcal M$ is a non-locally modular strongly minimal $\mathcal R$-relic. Then $\mathcal M$ interprets a copy of the algebraically closed field $R[i]$.
\end{introcon}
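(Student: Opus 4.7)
The plan is to extend Theorem~\ref{T: intro} via Hrushovski's two-step program: construct an interpretable group from the curve family, then an interpretable field from the group, and finally identify that field with $R[i]$ using o-minimal structure theory. Theorem~\ref{T: intro} supplies the essential geometric input: because $\dim_{\mathcal R}(M)=2$, the set $M$ (after coding) sits as a $2$-dimensional definable set in some $R^n$, so that definable subsets of $M^k$ inherit o-minimal tools like cell decomposition, generic smoothness, and dimension of generic projections. Non-local modularity supplies an almost faithful definable $2$-dimensional family $\mathcal C$ of plane curves in $M^2$. Together these inputs give enough control over $\mathcal C$ to analyze generic compositions of curves and their canonical bases.

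The group-construction step adapts the curve-composition arguments of \cite{CasACF0} and \cite{CaHaYe} to the o-minimal setting: one shows that the generic composition of two members of $\mathcal C$ is again (essentially) a member of $\mathcal C$, producing a generic group chunk on canonical bases. Hrushovski's group configuration theorem, applied inside the strongly minimal $\mathcal M$, then yields an interpretable strongly minimal infinite group $G$, which is abelian by Reineke and carries a nontrivial definable family of endomorphisms coming from $\mathcal C$. A second round of generic-composition analysis, now applied to translates of the graph of this endomorphism action, produces a field configuration and hence an algebraically closed field $K$ interpretable in $\mathcal M$ (and therefore in $\mathcal R$).

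By Peterzil--Starchenko's classification of infinite fields interpretable in o-minimal structures, $K$ is definably isomorphic to $R'[i]$ for some real closed field $R'$ definable in $\mathcal R$. Standard o-minimal rigidity then identifies $R'$ with a definable copy of the underlying field of $\mathcal R$, yielding the required interpretation of $R[i]$ in $\mathcal M$.

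The main obstacle, as in \cite{CasACF0} and \cite{CaHaYe}, will be the generic composition step: Hrushovski's counterexamples show that a strongly minimal $2$-dimensional family of plane curves need not compose to lie back in a $2$-dimensional family, so one has to genuinely use the o-minimal geometry of $\mathcal R$ (via Theorem~\ref{T: intro}) to rule this pathology out. I expect this to come down to verifying the axiomatic hypotheses of \cite{CaHaYe} in the o-minimal setting, which will require careful use of o-minimal tangent-space and dimension arguments; once this composition property is in hand, the remainder of the argument is largely standard machinery.
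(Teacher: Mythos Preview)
The statement you are attempting to prove is labeled \texttt{introcon} in the paper: it is Peterzil's \emph{Conjecture}, not a theorem. The paper does not prove it and contains no proof for you to be compared against. On the contrary, the paper explicitly lays out the four-step program (Steps~1--4 in the introduction), notes that Steps~1 and~4 were previously established (\cite{HaOnPe} and \cite{ElHaPe}), proves Step~2 as its main theorem (Theorem~\ref{T: intro}), and states plainly that Step~3---interpreting a strongly minimal group when $\dim_{\mathcal R}(M)=2$---remains open.

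Your proposal is precisely an outline of this four-step program, and the genuine gap is that you hand-wave exactly the open step. You write that ``one shows that the generic composition of two members of $\mathcal C$ is again (essentially) a member of $\mathcal C$, producing a generic group chunk,'' and that you ``expect this to come down to verifying the axiomatic hypotheses of \cite{CaHaYe}.'' But the axiomatic machinery of \cite{CaHaYe} is already deployed in this paper (e.g.\ Fact~\ref{F: closure}, enough open maps); it yields weak detection of closures and the higher-dimensional bound, not a group configuration in dimension~2. The paper's own group-configuration argument (Lemma~\ref{L: group configuration from small comp}) only fires when a certain composite family \emph{fails} to be almost faithful, which is not something one can force. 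Nothing currently known shows that generic compositions of curves in a non-locally modular $2$-dimensional o-minimal relic must land back in a rank-$2$ family, and you have not supplied such an argument. Until Step~3 is carried out, the conjecture stands as a conjecture; your proposal is a correct description of the \emph{strategy}, but not a proof.
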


This is equivalent to $\mathcal M$ satisfying the Zilber trichotomy, because by \cite{PeSte} any infinite stable field interpreted in $\mathcal R$ is definably isomorphic to $R[i]$.

Peterzil's Conjecture is part of a larger program of classifying general relics of o-minimal structures. Many results along these lines were obtained by Hasson, Onshuus, and Peterzil (in different combinations -- see \cite{HaOn}, \cite{HaOn1}, \cite{HaOnPe}, \cite{HaOnPe1}). An imprecise and incomplete summary of the picture is the following: let $\mathcal M$ be any relic of an o-minimal structure. Then:

\begin{itemize}
    \item Types in $\operatorname{Th}(\mathcal M)$ admit finite coordinatizations into `rank 1' types.
    \item Every rank 1 type is (essentially) either o-minimal or of U-rank 1.
    \item A satisfying classification of the o-minimal case is given by the Peterzil-Starchenko trichotomy (\cite{PeStTricho}). 
    \item A satisfying classification of the U-rank 1 case would amount to solving Peterzil's conjecture.
\end{itemize}

In particular, a solution of Peterzil's conjecture would give a satisfying (local) description of \textit{all} relics of o-minimal structures (or at least relics of o-minimal expansions of fields). This is similar to the ACF-case, where the trichotomy for strongly minimal relics implies a similar statement for all relics (i.e. the main theorem of \cite{CasACF0}). 


\subsection{Past Results}

Suppose $\mathcal R$ and $\mathcal M$ are as in Peterzil's conjecture (so we assume $\mathcal M$ is not locally modular). Let $M$ be the universe of $\mathcal M$. By elimination of imaginaries in $\mathcal R$, we can assume $M\subset R^n$ for some $n$. Let $\dim_{\mathcal R}(M)$ be the o-minimal dimension of the set $M$. 

Conjecture \ref{C: o-minimal conjecture} naturally divides into cases according to the value $\dim_{\mathcal R}(M)$. This is similar to the original Restricted Trichotomy Conjecture, but a bit more complicated. Working over ACF, one considers the `one-dimensional' and `higher-dimensional' cases -- essentially because the desired interpreted field is one-dimensional. In the o-minimal setting, the desired interpreted field $R[i]$ is \textit{two-dimensional} -- and accordingly, there are \textit{three} natural cases for $\dim_{\mathcal R}(M)$: 1, 2, and $\geq 3$. Indeed, a proof of the conjecture is expected to comprise the following four steps:\\

\textbf{Step 1:} $\dim_{\mathcal R}(M)$ cannot be 1.

\textbf{Step 2:} $\dim_{\mathcal R}(M)$ cannot be $\geq 3$.

\textbf{Step 3:} If $\dim_{\mathcal R}(M)=2$, then interprets a strongly minimal group.

\textbf{Step 4:} If $\dim_{\mathcal R}(M)=2$ and $\mathcal M=(M,+,...)$ is an expansion of a group, then $\mathcal M$ interprets a copy of $R[i]$.\\

Steps 1 and 4 were done previously. Step 1 is due to Hasson, Onshuus, and Peterzil (\cite{HaOnPe}). Step 4 is due to Eleftheriou, Hasson, and Peterzil (\cite{ElHaPe}), building on work of Hasson and Kowalski (\cite{HaKo}).

Our Theorem \ref{T: intro} is precisely Step 2. Thus, we reduce Peterzil's Conjecture to Step 3 (interpreting a strongly minimal group in dimension 2). The ultimate goal is to prove this final step by combining our techniques with those of \cite{ElHaPe}. 



\subsection{Summary of Proof}

We now summarize the proof of Theorem \ref{T: intro}. Most of the proof will be done axiomatically, using language from \cite{CaHaYe} (with some additions). For now, for concreteness, we stick to the o-minimal setting.

The proof of Theorem \ref{T: intro} follows the same general strategy as \cite{CasACF0} and \cite{CaHaYe} (at least their `higher-dimensional' parts), but the o-minimal version introduces several challenges. We start by reviewing the strategy in \cite{CasACF0}. So let $\mathcal M=(M,...)$ be a non-locally modular strongly minimal $(\mathbb C,+,\cdot)$-relic (where the goal is to show that $\dim_{\mathbb C}(M)=1$). The following are the main steps of the argument:

\begin{enumerate}
    \item Prove a `closure detection' result, stating that $\mathcal M$ can partially recover the closure operator on $\mathcal M$-definable sets.
    \item Construct a sufficiently well-behaved \textit{ramification} point of an $\mathcal M$-definable map $f:X\rightarrow Y$ (i.e. a point in $X$ where the differential is not injective and with some nice geometric properties).
    \item Using the \textit{Purity of the Ramification Locus}, show that $\dim(\operatorname{Ram}(f))\geq\dim(X)-1$, where $\operatorname{Ram}(f)\subset X$ is the ramification locus of $f$.
    \item On the other hand, use (1) to show that $\operatorname{Ram}(f)$ is (approximately) $\mathcal M$-definable.
    \item By (3) and (4), conclude that $\mathcal M$ defines two sets whose complex dimensions differ by 1; use this to show $\dim(M)=1$.
\end{enumerate}

Step (1) is the hardest part of \cite{CasACF0}. Thankfully, this step was treated axiomatically in \cite{CaHaYe}, and we get it for free from the axiomatic version (see Fact \ref{F: closure}). Likewise, steps (4) and (5) will transfer appropriately when they are needed. The challenges are steps (2) and (3). In fact, (2) would also transfer smoothly as stated, but we will have to adapt (3) in a way that also changes (2). 

The issue with (3) is that purity of ramification is an algebraic fact about morphisms of smooth varieties, which is easily seen to fail in o-minimal structures. For example, the complex squaring map can be viewed as a semi-algebraic map with ramification locus of o-minimal codimension 2. Moreover, this is the only place in \cite{CasACF0} where the background geometry interacts with the dimension of $M$, so the strategy cannot be salvaged without this step.
 
As it turns out, there is an o-minimal version of the purity of the ramification locus, which we prove in an appendix (Proposition \ref{ramification prop}). This seems not to have been known, and may be of interest in its own right:

\begin{proposition}[Proposition \ref{ramification prop}] Let $\mathcal R=(R,+,\cdot,<,...)$ be an o-minimal expansion of a real closed field. Let $X$ and $Y$ be definable $n$-manifolds over $\mathcal R$, let $f:X\rightarrow Y$ be a continuous definable finite-to-one map, and let $\operatorname{Ram}(f)\subset X$ be the set of points where $f$ is not locally injective. Then either $\operatorname{Ram}(f)=\emptyset$, or $\dim(\operatorname{Ram}(f))=n-1$, or $\dim(\operatorname{Ram}(f))=n-2$.
\end{proposition}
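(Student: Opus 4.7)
The plan is a proof by contradiction: we assume $\operatorname{Ram}(f)\neq\emptyset$ and $\dim(\operatorname{Ram}(f))\leq n-3$, aiming to construct a continuous definable local inverse of $f$ at a chosen ramification point and thereby contradict the defining property of $\operatorname{Ram}(f)$. Fix $p\in\operatorname{Ram}(f)$, set $q=f(p)$, and pass to definable manifold charts so that $X$ and $Y$ become definable open subsets of $R^{n}$ with $p=q=0$. Using finite-to-oneness, choose a small closed definable box $K\subset X$ with $p$ in its interior and $f^{-1}(q)\cap K=\{p\}$; the set $f(\partial K)$ is then closed definable and avoids $q$, so for a small open definable box $V\ni q$ disjoint from $f(\partial K)$, replacing $X$ by $\operatorname{int}(K)\cap f^{-1}(V)$ and $Y$ by $V$ makes $f\colon X\to Y$ definably proper.

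Next we remove the critical locus. Set $R=\operatorname{Ram}(f)\subset X$ and $C=f(R)\subset Y$; the finite-to-oneness of $f$ gives $\dim C=\dim R\leq n-3$ and $\dim f^{-1}(C)\leq n-3$. Let $X^{\circ}=X\setminus f^{-1}(C)$ and $Y^{\circ}=Y\setminus C$. On $X^{\circ}$, $f$ is locally injective by definition of $R$, hence a local homeomorphism by the definable invariance of domain; together with properness this promotes $f|_{X^{\circ}}\colon X^{\circ}\to Y^{\circ}$ to a definable finite covering map of some degree $d\geq 1$.

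The heart of the argument is the topological fact that, since $\dim C\leq n-3$, the space $Y^{\circ}$ is definably simply connected. This reduces, via o-minimal triangulation (van den Dries), to the classical general-position statement that removing a subcomplex of codimension at least $3$ from a simply connected $n$-manifold preserves simple connectivity; equivalently it follows from the o-minimal fundamental group theory of Berarducci--Otero and Edmundo--Jones--Peatfield. Granted this, the covering $f|_{X^{\circ}}$ is trivial, splitting into graphs of definable continuous sections $s_{1},\dots,s_{d}\colon Y^{\circ}\to X^{\circ}$. Because $f$ is proper and $f^{-1}(q)\cap X=\{p\}$, the cluster set of each $s_{i}$ at $q$ is contained in $\{p\}$, so each $s_{i}$ extends continuously across $q$ with value $p$. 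A parallel argument using only that $Y\setminus C$ is locally definably connected around every point of $C$ -- which needs just codimension $\geq 2$ -- extends each $s_{i}$ continuously across all of $C$. Any such extension $\overline{s}\colon Y\to X$ is a continuous definable section of $f$ with $\overline{s}(q)=p$, forcing $f$ to be locally injective at $p$ and delivering the contradiction.

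The main obstacle will be the topological input of step three: definable simple connectivity of $Y^{\circ}$. Classically the codimension-$3$ statement is folklore via transversality, but here we have only continuous definable maps in an arbitrary o-minimal expansion of an arbitrary real closed field, so we must either reduce to the classical case via o-minimal triangulation or work inside one of the existing definable-homotopy frameworks. The remaining ingredients -- arranging properness, invoking invariance of domain, and extending definable sections across a codimension-$\geq 2$ set -- use standard o-minimal machinery via cell decomposition and Pillay's frontier-dimension inequality.
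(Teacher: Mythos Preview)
Your plan is correct and tracks the paper's proof closely: both argue by contradiction from $\dim(\operatorname{Ram}(f))\leq n-3$, localize to a proper map over a box (the paper's ``good pair'' is exactly your $K,V$ setup), delete the image of the ramification locus to obtain a finite covering over a definably simply connected base, and use that trivialization to contradict ramification at the chosen point. The paper proves the needed simple-connectivity statement directly (its Lemma~\ref{manifold stays connected}, via a one-parameter translation of paths and homotopies), rather than routing through triangulation.

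The only substantive divergence is the endgame. After trivializing the covering into sheets $S_1,\dots,S_m$, the paper shows that each $\overline{S_i}$ contains $x_0$, deduces that the full preimage $U_0\cap f^{-1}(V_0)$ is definably connected, and then uses that removing a codimension-$3$ set preserves connectedness to force $m=1$; injectivity on $U_0\cap f^{-1}(V_0)$ is then read off using good pairs at each preimage. You instead extend a single section continuously across $C$ via the cluster-set argument and conclude local injectivity at $p$ from the extended section. This is valid and arguably slicker, but you should make the final implication explicit: the extended section $\overline s$ is an injective continuous definable map between $n$-manifolds, so by invariance of domain $\overline s(Y)$ is an \emph{open} neighborhood of $p$ on which $f$ is injective (being inverse to $\overline s$). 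Without that invocation of invariance of domain, ``section through $p$ implies local injectivity at $p$'' is not immediate.
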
 

That is, the `complex squaring' example above is the worst-case scenario: the codimension of $\operatorname{Ram}(f)$ cannot go beyond 2 (unless $\operatorname{Ram}(f)$ is empty). Notice that the bound $\operatorname{codim}(\operatorname{Ram}(f))\leq 2$ is perfect for adapting step (5) above (since we want to show $\dim(M)\leq 2$ as opposed to $\dim(M)\leq 1$ in the algebraic setting).

An interesting feature of Proposition \ref{ramification prop} is that it is purely topological; in fact, the proof involves only o-minimal algebraic topology, and does not use any differential structure. This was surprising, as differential data typically plays a significant role in the Zilber trichotomy. In fact, because of the statement of Proposition \ref{ramification prop}, the entire proof of Theorem \ref{T: intro} (modulo the results of \cite{CaHaYe}) is done in the topological category.

Proposition \ref{ramification prop} might seem to fix everything; however, it also introduces a new problem. Notice that Proposition \ref{ramification prop} only works for \textit{finite-to-one} maps (we do not know if `generically finite-to-one' is enough). This actually bans the version of step (2) in \cite{CasACF0}. In that paper, (2) was done using a self-intersection of plane curves: given a family $\{C_t:t\in T\}$ of plane curves in $\mathcal M$, we chose a generic $\hat t\in T$, and a generic $\hat x\in C_{\hat t}$. Then the intersection $\hat x\in C_{\hat t}\cap C_{\hat t}$ is trivially a ramification point of an appropriate map, because it belongs to an infinite fiber. The same trick will not work in the current paper. Thus, the rest of the proof of Theorem \ref{T: intro} is a modified construction of a ramification point of a finite-to-one map.

First we assume, as in \cite{ElHaPe}, that $M$ carries a definable group structure (say $(M,+)$). In this case there is a nice trick to avoid infinite fibers: we work with the family $\{C+t:t\in M^2\}$ of translates of a distinguished plane curve $0\in C\subset M^2$; but instead of looking at pairwise intersections of the family $\{C+t\}$ with itself, we look at intersections between the two \textit{different} families $\{C+t\}$ and $\{-C+t\}$. This produces a similar ramification point at $0\in C\cap -C$ -- but because the intersection is no longer a self-intersection, we can force all nearby fibers to be finite. As it turns out, this argument can be expressed quite succinctly after simplifying with the group operation -- the shortest version is presented in Section \ref{group section}.

We then move to the general case. In Section \ref{S: informal} we informally discuss how to abstract a general strategy from the group case. The idea is to carry out the same construction, replacing the family of translates of $C$ with a rank 2 family $\{C_t:t\in T\}$ indexed by a set $T\subset M^2$; and replacing the `negative' operation $t\mapsto -C+t$ with the `dual' operation $t\mapsto\{x:t\in C_x\}$. In the case of groups, these two operations are equivalent, so this is a natural idea.

We then run the general argument in Section 6. The proof works assuming certain nice properties of the family $\{C_t:t\in T\}$ -- namely, that dualizing indeed changes the family (to avoid infinite intersections), and that $t\in C_t$ holds in a rather strong way for (almost) all $t\in T$ (this replaces the assumption that $0\in C$ in the group case). The remaining task is to construct a family $\{C_t:t\in T\}$ with these properties. We do this in Section \ref{construction section}. As it turns out, there is a construction that either works, or produces a strongly minimal group configuration. So in the end, we either run the general argument or reduce to the group case.

Finally, in Section \ref{S: main thm} we collect the pieces of the argument and conclude Theorem \ref{T: intro}; and in Section \ref{S: CCM}, we deduce the trichotomy for relics of compact complex manifolds (CCM). This is a rather immediate consequence of Theorem \ref{T: intro}. Let $\mathcal M=(M,...)$ be a non-locally modular strongly minimal CCM-relic. Since CCM is interpretable in the o-minimal structure $\mathbb R^{an}$, we can view $\mathcal M$ as an o-minimal relic. By Theorem \ref{T: intro}, $M$ has o-minimal dimension 2, and thus CCM dimension 1. But all one-dimensional CCM-definable sets are algebraic -- so we reduce to the trichotomy for ACF$_0$-relics.

\section{Preliminaries}

We assume basic model theory; see \cite{mar} for any necessary facts and definitions. Since they have been discussed at length in recent similar works, we also assume many basic properties of geometric and strongly minimal structures (e.g. the dimension theory of tuples and definable sets). For a survey, see the second sections of \cite{CaHaYe} and \cite{CasACF0}. We do remind and clarify a few things in Section \ref{S: setting}.

Throughout, we denote structures by calligraphic letters such as $\mathcal M$ and $\mathcal R$. The universe of a structure is denoted by the corresponding plain letter -- e.g. $M$ and $R$, respectively. Parameter sets are usually (but not always) denoted $A,B,...$, and tuples are typically denoted either $a,b,c,...$ or $x,y,z,...$. Throughout, the word `definable' allows parameters but not imaginaries. When dealing with imaginary sorts, we use `interpretable'. Finally, in the presence of more than one structure, we use the notation $\mathcal M(A)$-definable as a shorthand for `definable over $A$ in $\mathcal M$' (and similarly for interpretable).

\subsection{t-minimal Hausdorff Geometric Structures}

In \cite{CaHaYe}, we defined \textit{Hausdorff geometric structures} as an abstract setting for the Zilber trichotomy. In that paper, we notated such structures as $(\mathcal K,\tau)$. In this paper, we use $(\mathcal R,\tau)$ to match the standard notation for o-minimal structures.

\begin{notation} Throughout the paper, in the presence of a topology, we use $\overline X$ to denote the closure of $X$, and $\operatorname{Fr}(X)$ to denote the frontier of $X$ (i.e. $\overline X-X$).
\end{notation}

\begin{definition}\label{D: HGS}
    Let $\mathcal R$ be a structure, and let $\tau$ be a topology on $R$, extended to a topology on every definable subset of each $R^n$ using the product and subspace topologies. We call $(\mathcal R,\tau)$ a \textit{Hausdorff geometric structure} if the following hold:
    \begin{enumerate}
        \item $\tau$ is Hausdorff. 
    \item $\mathcal R$ is geometric and $\aleph_1$-saturated (but $\mathcal L$ might be uncountable).
    \item (Strong Frontier Inequality) If $X\subset K^n$ is definable over $A$ and 
    $a\in\overline{\operatorname{Fr}(X)}$ then $\dim(a/A)<\dim(X)$.
    \item (Baire Category Axiom) Let $X\subset R^n$ be definable over a countable set $A$, and let $a\in X$ be generic over $A$. Let $B\supset A$ be countable. Then every neighborhood of $a$ contains a generic of $X$ over $B$.
    \item (Generic Local Homeomorphism Property) Suppose $X$, $Y$, and $Z\subset X\times Y$ are definable over $A$ of the same dimension, and each of $Z\rightarrow X$ and $Z\rightarrow Y$ is finite-to-one. Let $(x,y)$ be generic in $Z$ over $A$. Then there are open neighborhoods $U$ of $x$ in $X$, and $V$ of $y$ in $Y$, such that the restriction of $Z$ to $U\times V$ is the graph of a homeomorphism $U\rightarrow V$.   
    \end{enumerate}
\end{definition}

In \cite{CaHaYe}, it was important that the topology $\tau$ need not admit a definable basis. In this paper, everything is modeled after densely ordered structures -- so there seems to be little harm in assuming a definable basis. In this case, there are only finitely many isolated points of $R$ -- so it is harmless to remove them. In other words, in addition to Definition \ref{D: HGS}, we assume $(\mathcal R,\tau)$ is \textit{t-minimal} (\cite{mathews}, Definition 2.4).

\begin{definition}\label{L: definable HGS} A \textit{t-minimal Hausdorff geometric structure} is a Hausdorff geometric structure with a $\emptyset$-definable basis and no isolated points; where by `$\emptyset$-definable basis', we mean a formula $\phi(x,\overline y)$ without parameters so that the instances $\phi(x,\overline a)$ for $\overline a\in R^{|\overline x|}$ form a basis of open sets for the topology $\tau$.
\end{definition}

\begin{remark}
    Among the \textit{uniform structures}, t-minimal Hausdorff geometric structures are precisely the visceral structures with the exchange property (see \cite{visceral} and \cite{johvisceral} for more about these notions). We do not prove this (and do not need it).
\end{remark}

\begin{remark}
    For t-minimal Hausdorff geometric structures, there is no need to assume $\aleph_1$-saturation in the definition (i.e. given a definable basis, one can rewrite the definition in a way that doesn't depend on the specific model). We still assume $\aleph_1$-saturation for consistency with \cite{CaHaYe}.
\end{remark}

For t-minimal structures, Definition \ref{D: HGS} simplifies considerably. We will not need this, so we omit the proof.

\begin{lemma} Let $\mathcal R$ be an $\aleph_1$-saturated geometric structure. Let $\tau$ be a Hausdorff topology on $R$ with a $\emptyset$-definable basis and no isolated points. Then $(\mathcal R,\tau)$ is a t-minimal Hausdorff geometric structure if and only if the following hold:
\begin{enumerate}
    \item For every definable set $X$, $\dim(\operatorname{Fr}(X))<\dim(X)$.
    \item If $X$ and $Y$ are definable, and $f:X\rightarrow Y$ is a definable function, then there is a definable $X'\subset X$ such that $f$ is continuous on $X'$ and $\dim(X-X')<\dim(X)$.
\end{enumerate}
\end{lemma}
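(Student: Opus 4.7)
My plan is to establish the equivalence in two directions.

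\textbf{Forward direction.} Assuming $(\mathcal R,\tau)$ is a t-minimal Hausdorff geometric structure, I will derive (1) and (2). Statement (1) follows immediately from the Strong Frontier Inequality applied to a generic point $a$ of $\operatorname{Fr}(X)$ over a parameter set $A$ defining $X$: since $\operatorname{Fr}(X)\subseteq\overline{\operatorname{Fr}(X)}$, SFI gives $\dim(a/A)<\dim(X)$, and hence $\dim(\operatorname{Fr}(X))<\dim(X)$. For (2), I will use the $\emptyset$-definable basis $\phi(x,y)$ to write the continuity locus $X_{\operatorname{cont}}$ of an $A$-definable $f:X\to Y$ as itself $A$-definable. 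Supposing for contradiction that $a$ is generic in $X\setminus X_{\operatorname{cont}}$ over $A$ with $b=f(a)$, a discontinuity witness $y$ places $a$ in $\operatorname{Fr}(X\setminus f^{-1}(\phi(\cdot,y)))$, whose dimension is strictly less than $\dim(X)$ by SFI; a careful dimension count, using that $y$ can be selected from a family parametrised over $b\in\acl(Aa)$, then contradicts the genericity of $a$.

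\textbf{Backward direction.} Assuming (1) and (2), I will derive SFI, the Baire Category Axiom, and the Generic Local Homeomorphism Property (GLHP). For SFI, I observe $\overline{\operatorname{Fr}(X)}=\operatorname{Fr}(X)\cup\operatorname{Fr}(\operatorname{Fr}(X))$ and apply (1) twice to obtain $\dim(\operatorname{Fr}(\operatorname{Fr}(X)))<\dim(\operatorname{Fr}(X))<\dim(X)$, so every point of $\overline{\operatorname{Fr}(X)}$ has dimension $<\dim(X)$ over the parameters of $X$. For the Baire Axiom, I will use $\aleph_1$-saturation and a compactness argument: if no generic of $X$ over a countable $B\supseteq A$ lies in a basic neighbourhood $U=\phi(\cdot,y_0)$ of $a$, then $U\cap X$ is covered by finitely many $B$-definable sets of dimension $<\dim(X)$; combining this with (1) (which ensures that low-dimensional definable sets are nowhere dense in $X$ near a generic point) lets me contradict the genericity of $a$ despite the possible entanglement of $y_0$ with $a$.

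For GLHP, let $(x,y)$ be generic in $Z$ over $A$. Since $\pi_X$ is finite-to-one, $\pi_X^{-1}(x)=\{(x,y_1),\ldots,(x,y_k)\}$ with $y_1=y$, and the Hausdorff property supplies pairwise disjoint opens $V_i\ni y_i$ in $Y$. Let $S=\pi_X(Z\cap(X\times(Y\setminus\bigcup_i V_i)))$; by construction $x\notin S$. Because each $y_i\in\acl(Ax)$ (finite-to-one fibres), $S$ is definable over $\acl(Ax)$, and (1) applied to $S$ forces $x\notin\overline{S}$, producing an open $U\ni x$ with $Z\cap\pi_X^{-1}(U)\subseteq U\times\bigcup_i V_i$. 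After shrinking $U$, a symmetric argument on the $\pi_Y$ side isolates the sheet $Z\cap(U\times V_1)$ as a bijection with $U$; applying (2) to its inverse (and its $\pi_Y$-analogue) upgrades this to the desired homeomorphism.

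The main obstacle I anticipate is the Baire Axiom: while the compactness framework is standard, handling neighbourhoods of $a$ whose parameters may be entangled with $a$ itself requires care, and is the step most sensitive to the $\emptyset$-definable basis assumption and the absence of isolated points.
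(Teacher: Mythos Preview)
The paper explicitly omits this proof (``We will not need this, so we omit the proof''), so there is nothing to compare against; I comment only on the soundness of your plan.

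Your forward direction and your derivation of SFI from (1) are essentially fine. But your GLHP and Baire sketches in the backward direction share a genuine gap: you repeatedly apply (1) to sets whose defining parameters were chosen \emph{after} the generic point. In GLHP you assert that $S=\pi_X\bigl(Z\cap(X\times(Y\setminus\bigcup_iV_i))\bigr)$ is definable over $\acl(Ax)$; but the separating neighbourhoods $V_i$ are non-canonical and carry their own parameters, so (1) only yields $\dim(x/A,\text{params of }V_i)<\dim(X)$, which says nothing about $\dim(x/A)$. The same problem recurs when you apply (2) to the local inverse (definable only over those extra parameters), and again in Baire: the $B$-definable small sets may perfectly well contain $a$ together with a full neighbourhood of $a$ in $X$, since $a$ need not be generic over $B$, and (1) alone does not rule this out.

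For GLHP the clean fix is to avoid auxiliary parameters entirely. Set
\[
D=\{(x',y',y''):(x',y'),(x',y'')\in Z,\ y'\neq y''\},
\]
an $A$-definable set with $\dim(D)\le\dim(X)$ since $D\to X$ is finite-to-one. Failure of local injectivity of $\pi_X$ at $(x,y)$ says exactly that $(x,y,y)\in\overline D$, hence $(x,y,y)\in\operatorname{Fr}(D)$; then (1) forces $\dim((x,y)/A)<\dim(X)$, contradicting genericity. A symmetric argument handles $\pi_Y$, and the upgrade to a homeomorphism goes through (2) applied to $A$-definable data rather than to the local inverse directly. For Baire, the right move is to reorder: once GLHP is available, use it (with $Z$ the graph of a generically finite-to-one coordinate projection $X\to R^{\dim X}$) to identify a neighbourhood of $a$ in $X$ with an open subset of $R^{\dim X}$; the no-isolated-points hypothesis then guarantees every nonempty open in $R^{\dim X}$ has full dimension, so no $B$-definable set of smaller dimension can contain it. This dissolves the obstacle you flagged.
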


\subsection{Enough Open Maps}

Suppose $(\mathcal R,\tau)$ is a Hausdorff geometric structure. In \cite{CaHaYe}, we also defined the assertion `$(\mathcal R,\tau)$ has \textit{enough open maps}' \cite[Definition 3.25]{CaHaYe}. This is a technical condition that we don't state, but is needed for most of the results of \cite{CaHaYe}.

\begin{fact}\label{F: enough opens}\cite[Theorem 9.4 and Proposition 3.40]{CaHaYe}  Every $\aleph_1$-saturated o-minimal expansion of a real closed field (equipped with the order topology) is a t-minimal Hausdorff geometric structure with enough open maps.
\end{fact}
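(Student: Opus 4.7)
The plan is to verify each clause of Definition \ref{D: HGS}, together with the t-minimality hypotheses and the `enough open maps' condition, by citing standard o-minimal geometry (cell decomposition, dimension theory, generic continuity). I would organize the argument axiom by axiom, treating the topological/structural hypotheses first and the deeper geometric ones last.

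First I would dispatch the easy points. The order topology is Hausdorff because density of $<$ separates any two points by open intervals, the open boxes $\prod_i(a_i,b_i)$ form a $\emptyset$-definable basis for each $R^n$ (given by one parameterized formula), and density forbids isolated points; this already handles the extra t-minimality clauses of Definition \ref{L: definable HGS}. Geometricity and $\aleph_1$-saturation are assumed. The Strong Frontier Inequality is immediate from the classical o-minimal fact that $\dim(\overline X \setminus X)<\dim(X)$ for every definable $X$ (Pillay / van den Dries), combined with the identification of $\dim(a/A)$ with the dimension of the locus of $a$ over $A$. For the Baire Category Axiom, given $a$ generic in $X$ over $A$, I would pick an $A$-definable cell decomposition; then $a$ lies in the unique top-dimensional cell $C$, any neighborhood of $a$ meets $C$ in a nonempty open subset of $C$, and by $\aleph_1$-saturation this open subset contains a realization of the generic type of $X$ over the countable set $B$.

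For the Generic Local Homeomorphism Property I would start from $Z\subset X\times Y$ equidimensional with both projections finite-to-one. A simultaneous cell decomposition of $X$, $Y$, and $Z$, together with o-minimal generic continuity, reduces matters to the case where $Z$ is an open cell and both projections are continuous finite-to-one maps between cells of equal dimension. Near a generic point of $Z$, the finiteness of the fibers together with Hausdorffness allows one to isolate a single branch of each projection, giving a definable section, and the two sections patch to the desired homeomorphism on a product neighborhood.

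The main obstacle is the last clause, namely that the structure has `enough open maps' in the technical sense of \cite[Definition 3.25]{CaHaYe}, whose statement is not reproduced in the excerpt. I expect the verification, once the definition is unwound, to boil down to an omnibus corollary of cell decomposition: every $A$-definable $X\subset R^n$ of dimension $d$ admits, at a generic point $a\in X$, an $A$-definable projection to $R^d$ whose restriction to a neighborhood of $a$ is open and has the required compatibility with generic finite-to-one maps (read off from the $d$-coordinate projection associated to the cell of $a$). This is essentially an o-minimal packaging step with no new geometric content, and the bulk of the work is bookkeeping to match the axioms of \cite{CaHaYe} exactly; this is presumably why the authors content themselves with a citation to Theorem 9.4 and Proposition 3.40 of that paper rather than reproducing the argument.
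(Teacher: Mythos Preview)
The paper gives no proof of this fact at all: it is stated purely as a citation to \cite[Theorem 9.4 and Proposition 3.40]{CaHaYe}, so there is nothing to compare your argument against beyond that reference. Your sketch of the Hausdorff geometric structure axioms (Hausdorffness, definable basis, no isolated points, geometricity, the Strong Frontier Inequality via $\dim(\overline X\setminus X)<\dim X$ together with $\dim(\overline Y)=\dim Y$, and the Baire Category Axiom via cell decomposition and saturation) is sound and is indeed how one would verify these clauses directly. Your treatment of the Generic Local Homeomorphism Property is a bit informal---``isolating a single branch'' needs the o-minimal monotonicity/cell decomposition machinery to ensure the local section is definable and continuous---but this is standard and easily filled in.

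You are also right that the only substantive content is the `enough open maps' clause, and right that without \cite[Definition 3.25]{CaHaYe} in front of you the verification cannot be carried out precisely; your guess that it reduces to a cell-decomposition statement about generic coordinate projections is in the correct spirit, and this is exactly why the paper is content to cite rather than reprove. In short, your proposal does strictly more than the paper (which proves nothing here), and what you do is correct modulo the unavoidable black box.
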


\subsection{Manifolds in t-minimal Hausdorff Geometric Structures}\label{SS: Man}

Many of the results of \cite{CaHaYe} relied on an abstract definition of `smoothness'. Given a Hausdorff geometric structure $(\mathcal R,\tau)$, a notion of smoothness is a map $X\mapsto X^S$ assigning each definable set to a `smooth locus' and satisfying certain properties.

O-minimal expansions of fields carry a canonical notion of smoothness, coming from o-minimal differential geometry. We expected to use this extensively, but ultimately it will not be used at all (aside from quoting results from \cite{CaHaYe}). Instead, the pure topological category is more effective. Thus, we now introduce \textit{manifolds} as a `purely topological' analog of smoothness. 

In fact, we do something a bit more general. First, we define `lores' (short for \textbf{lo}cal \textbf{re}ducts) of a t-minimal Hausdorff geometric structure: a lore $\mathcal S$ is a certain rich-enough collection of definable sets (called loric sets) that are determined locally and satisfy certain preservation properties (approximating the notion of a reduct of $\mathcal R$). The canonical lore (and the one to use in the o-minimal setting) is the \textit{full lore} (containing every definable set).

Now suppose $(\mathcal R,\tau)$ is a t-minimal Hausdorff geometric structure with a lore $\mathcal S$. We then define the class of loric manifolds (i.e. manifolds with respect to $\mathcal S$): these are just like usual manifolds, except the charts are loric maps (i.e. maps with loric graph). In the o-minimal setting, if $\mathcal S$ is the full lore, the loric manifolds are the usual definable manifolds.

Our motivation for this approach comes from real closed valued fields (RCVF). In a future paper, we plan to extend our results to certain expansions of RCVF. However, since valued fields are totally discontinuous, the pure topological category won't be as useful. Instead, we will work with loric maps and loric manifolds, with loric interpreted as `locally semialgebraic' in an appropriate sense. For subtle reasons, some of our results are true for locally semialgebraic sets and false in general. We delay further discussion of this issue to a future paper.

\begin{definition}\label{D: strong set}
    Let $(\mathcal R,\tau)$ be a t-minimal Hausdorff geometric structure. A \textit{lore} of $(\mathcal R,\tau)$ is a collection $\mathcal S$ of $\mathcal R$-definable sets (called \textit{loric sets}) such that:
    \begin{enumerate}
        \item\label{D: strong sets basic} (Basic Properties) $R$ is loric. The diagonal $\{(x,x)\}\subset R^2$ is loric. The loric sets are closed under finite products, finite intersections, and coordinate permutations.
        \item\label{D: strong sets invariance} (Invariance) If $f:X\rightarrow Y$ is a definable homeomorphism, $X$ is loric , and the graph of $f$ is loric, then $Y$ is loric.
        \item\label{D: strong sets loc} (Locality) Every definable open subset of a loric set is loric. If $X$ is definable, and $\{U_i:i\in I\}$ is an open cover of $X$ by loric sets, then $X$ is loric.
        \item\label{D: strong sets gen} (Genericity) If $X\neq\emptyset$ is $A$-definable, then there is a relatively open $A$-definable $X'\subset X$ such that $\dim(X-X')<\dim(X)$ and $X'$ is loric.
    \end{enumerate}
     Suppose $\mathcal S$ is a local reduct. An \textit{loric map} is a continuous function whose graph is loric. A \textit{loric homeomorphism} is loric map that is also a homeomorphism. Two sets $X,Y$ are \textit{lorically homeomorphic} if there is a loric homeomorphism $f:X\rightarrow Y$.
\end{definition}

\begin{remark} Definition \ref{D: strong set}(1) and (2) approximate the assertion that $\mathcal S$ contains the definable sets of some reduct of $\mathcal R$ (where (2) is the analog of existential quantification). Of course, an actual reduct would entail closure under complements, unions, and projections -- but these are less compatible with locality, and can be seen to fail for locally semialgebraic sets in RCVF (Example \ref{E: RCF Lore}).
\end{remark}

\begin{remark}
    The term `loric' depends on the lore $\mathcal S$, and should really be `loric for $\mathcal S$'. Since we will work in a t-minimal Hausdorff geometric structure with a fixed lore, we stick to `loric'. This also applies to loric manifolds (defined below).
    \end{remark}

Several other properties follow easily from (1)-(4) above. We omit the details.

\begin{lemma}\label{L: strong basic} Suppose $(\mathcal R,\tau,\mathcal S)$ is a t-minimal Hausdorff geometric structure, and $\mathcal S$ is a lore of $(\mathcal R,\tau)$. 
\begin{enumerate}
    \item\label{L: finite sets strong} Finite sets (in $R^n$) are loric.
    \item\label{L: strong projection property} If $X$ and $Y$ are loric, then the projections $X\times Y\rightarrow X$ and $X\times Y\rightarrow Y$ are loric maps.
    \item\label{L: domain is strong} (Preimage Property) If $f:X\rightarrow R^n$ is loric, and $Y\subset R^n$ is loric, then $f^{-1}(Y)$ is loric. In particular, the domain of any loric map is loric.
    \item\label{L: strong inverse} (Inverse property) The inverse of a loric homeomorphism is loric.
    \item\label{L: strong coordinate} (Coordinate-by-coordinate property) Suppose $f_i:X\rightarrow R$ is loric for $i=1,...,n$, and $f:X\rightarrow R^n$ is given by $f_i$ on each coordinate. Then $f$ is loric if and only if each $f_i$ is.
    \item\label{L: strong homeo strong} If $X$ and $Y$ are lorically homeomorphic, then $X$ and $Y$ are loric.
    \item\label{L: strong composition} (Composition property) The composition of two loric maps is loric, and the identity map on any loric set is loric (thus the relation `$X$ and $Y$ are strongly homeomorphic' is an equivalence relation on loric sets).
    \item\label{L: strong rest} (Restriction property) If $f:X\rightarrow Y$ is loric, and $X'\subset X$ is loric, then $f\restriction_{X'}$ is loric.
\end{enumerate}
\end{lemma}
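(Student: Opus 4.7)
The plan is to derive each item mechanically from the four axioms of Definition \ref{D: strong set}, with \emph{invariance} under loric homeomorphisms (axiom (2)) as the main engine. Throughout, I will use freely that any ``equal-coordinate'' set $\{x\in R^n : x_i=x_j\}$ is loric (a product of the diagonal of $R^2$ with $R^{n-2}$, followed by a coordinate permutation); together with closure under products and intersections, this manufactures a large supply of loric sets by pure set-theoretic bookkeeping.

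For (\ref{L: finite sets strong}), any singleton $\{a\}$ has dimension $0$, so the genericity axiom forces the relatively open loric $X'\subset\{a\}$ to equal $\{a\}$ itself. A finite set is then covered by its singletons, each open (the subspace topology on a finite Hausdorff set is discrete) and loric, so locality applies. For (\ref{L: strong projection property}), the graph of $X\times Y\to X$, viewed as $\{(x,y,x')\in R^{m+n+m}: (x,y)\in X\times Y,\ x=x'\}$, is an intersection of $X\times Y\times R^m$ with equal-coordinate sets; the projection is continuous, so it is loric.

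The foundational step is (\ref{L: domain is strong}), which I split in two. First, the \textbf{domain} claim: if $f:X\to R^n$ is loric, then $\pi_1:\mathrm{Graph}(f)\to X$ is a definable homeomorphism, because its set-theoretic inverse $x\mapsto(x,f(x))$ is continuous precisely because $f$ is. The graph of $\pi_1$, namely $\{((x,y),x):(x,y)\in\mathrm{Graph}(f)\}$, is $\mathrm{Graph}(f)\times R^m$ cut by equal-coordinate conditions, hence loric; invariance then gives that $X$ is loric. For the main claim, the bijection $f^{-1}(Y)\leftrightarrow\mathrm{Graph}(f)\cap(X\times Y)$, $x\mapsto(x,f(x))$, is a homeomorphism (both directions continuous because $f$ is), and its graph is loric by the same diagonal trick; the target is loric as the intersection of two loric sets (using the domain claim to see $X$ is loric), so invariance finishes.

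The remaining items are variations on the same pattern. (\ref{L: strong inverse}): $\mathrm{Graph}(f^{-1})$ comes from $\mathrm{Graph}(f)$ by swapping coordinates, and $f^{-1}$ is continuous by hypothesis. (\ref{L: strong coordinate}): use the homeomorphism $\mathrm{Graph}(f)\to\mathrm{Graph}(f_i)$, $(x,y)\mapsto(x,y_i)$, plus invariance in one direction, and assemble $\mathrm{Graph}(f)=\bigcap_i\{(x,y):y_i=f_i(x)\}$ from the loric graphs of the $f_i$'s in the other. (\ref{L: strong homeo strong}) is immediate from (\ref{L: domain is strong}) and invariance. For (\ref{L: strong composition}), form the loric set $W=(\mathrm{Graph}(f)\times Z)\cap(X\times\mathrm{Graph}(g))$ and apply invariance via the homeomorphism $W\to\mathrm{Graph}(g\circ f)$, $(x,y,z)\mapsto(x,z)$; the identity map on a loric $X\subset R^m$ has graph $(X\times X)\cap\Delta$, visibly loric. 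Finally (\ref{L: strong rest}) is just $\mathrm{Graph}(f|_{X'})=\mathrm{Graph}(f)\cap(X'\times Y)$. The one genuinely subtle point is the domain claim inside (\ref{L: domain is strong}): without it, (\ref{L: strong composition}) and (\ref{L: strong rest}) cannot even be stated cleanly (the composed or restricted map needs a loric domain), and the argument works only because the definition of ``loric map'' already bakes in continuity, so $\pi_1:\mathrm{Graph}(f)\to X$ really is a homeomorphism rather than a mere bijection.
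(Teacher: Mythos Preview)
The paper omits the proof entirely (``Several other properties follow easily from (1)--(4) above. We omit the details''), so there is nothing to compare against; your argument is a correct and well-organized filling-in of those details, with the diagonal/equal-coordinate trick plus invariance doing exactly the work the author had in mind.

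Two cosmetic points worth tightening. In (\ref{L: strong composition}) you form $W=(\mathrm{Graph}(f)\times Z)\cap(X\times\mathrm{Graph}(g))$, but the codomain $Z$ of $g$ is not assumed loric; replace $Z$ by the ambient $R^p$ (and likewise $X$ by $R^m$) so that both factors are visibly loric products --- the intersection is unchanged since $\mathrm{Graph}(f)\subset X\times R^n$ and $\mathrm{Graph}(g)\subset R^n\times Z$ anyway. The same remark applies to $Y$ in (\ref{L: strong rest}): write $\mathrm{Graph}(f|_{X'})=\mathrm{Graph}(f)\cap(X'\times R^n)$ rather than $\cap(X'\times Y)$. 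Also, in your preimage argument the direction of invariance should be stated as: $\mathrm{Graph}(f)\cap(X\times Y)$ is loric, the projection to $f^{-1}(Y)$ is a homeomorphism with loric graph, hence $f^{-1}(Y)$ is loric --- your phrasing (``the target is loric \ldots so invariance finishes'') reverses source and target, though the underlying argument is the one you intend.
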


Note that the image of a loric map need not be loric (this only holds for homeomorphisms). In particular, while a projection of loric sets is a loric map, the projection of a loric set need not be a loric set.

\begin{example}
    Let $(\mathcal R,\tau)$ be any t-minimal Hausdorff geometric structure. Let $\mathcal S$ be the collection of all $\mathcal R$-definable sets. Then $\mathcal S$ is a lore of $(\mathcal R,\tau)$. We call $\mathcal S$ the \textit{full lore}.
\end{example}

\begin{example}\label{E: RCF Lore}
    Let $(\mathcal R,\tau)$ be a real closed valued field. Say that a definable set $X$ is \textit{locally semialgebraic} if it admits an open cover $\{U_i:i\in I\}$ such that each $U_i$ is definable in the pure field language. Let $\mathcal S$ be the collection of all locally semialgebraic sets. Then $\mathcal S$ is a lore of $(\mathcal R,\tau)$. We leave the details to a future paper.
\end{example}

\begin{definition}\label{D: manifold}
    Let $(\mathcal R,\tau)$ be a t-minimal Hausdorff geometric structure, and let $\mathcal S$ be a lore of $(\mathcal R,\tau)$. Let $X$ be definable with $\dim(X)=n$, and let $x\in X$. 
    \begin{enumerate}
        \item $x$ is a \textit{loric manifold point of $X$} if there is a definable set $U$ with $x\in U\subset X$, $U$ open in $X$, and $U$ is lorically homeomorphic to an open subset of $R^n$.
        \item The set of loric manifold points of $X$ is called the \textit{loric manifold locus of $X$} and is denoted $X^{lman}$.
        \item A \textit{loric $n$-manifold} is a definable set $Y$ with $\dim(Y)=n$ and $Y^{lman}=Y$.
    \end{enumerate}
    \end{definition}

    Note that a loric manifold is loric, because it admits an open cover by loric sets (Definition \ref{D: strong set}(\ref{D: strong sets loc})).

\begin{example}
    Let $(\mathcal R,\tau,\mathcal S)$ be an $\aleph_1$-saturated o-minimal expansion of a field with the full lore. Then the loric $n$-manifolds are precisely the definable sets which are definable $n$-manifolds in the usual sense. This is not automatic, because in o-minimality one requires a finite atlas. However, for definable sets, any atlas implies the existence of a finite atlas (\cite[Proposition 4.2]{BerOte}).
\end{example}

In general, the loric manifold locus need not be definable (because one would need to quantify over all potential loric homeomorphisms). It \textit{is} definable in o-minimal expansions of fields with the full lore, but we won't need this.

We now give a few basic properties of loric manifolds. \textbf{For the rest of Subsection \ref{SS: Man}, fix $(\mathcal R,\tau,\mathcal S)$, a t-minimal Hausdorff geometric structure with a lore.}

\begin{lemma}[Generic Local Loric Homeomorphism Property]\label{L: strong local homeo} Let $X$ and $Y$ be $A$-definable sets of the same dimension, and let $f:X\rightarrow Y$ be a finite-to-one $A$-definable function. Let $x\in X$ be generic over $A$. Then $f$ restricts to a loric homeomorphism between neighborhoods of $x$ and $f(x)$.
\end{lemma}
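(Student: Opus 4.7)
The plan is to combine the Generic Local Homeomorphism Property already built into a Hausdorff geometric structure (Definition \ref{D: HGS}(5)) with the Genericity axiom for lores (Definition \ref{D: strong set}(\ref{D: strong sets gen})). The first gives a \emph{definable} local homeomorphism at the generic point; the second will upgrade it to a \emph{loric} one after passing to a smaller open neighborhood.

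First I would apply axiom (5) of Definition \ref{D: HGS} to the graph $Z\subseteq X\times Y$ of $f$ at $(x,f(x))$, obtaining open neighborhoods $U\ni x$ and $V\ni f(x)$ such that $Z\cap(U\times V)$ is the graph of a definable homeomorphism $g:U\to V$. Next I would invoke Definition \ref{D: strong set}(\ref{D: strong sets gen}) on the $A$-definable set $Z$ to produce a relatively open, loric, $A$-definable $Z'\subseteq Z$ with $\dim(Z-Z')<\dim(Z)$. Because $f$ is finite-to-one, $\dim((x,f(x))/A)=\dim(x/A)=\dim(Z)$, so genericity of $x$ forces $(x,f(x))\in Z'$. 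Setting $W:=Z'\cap(U\times V)$ then yields a loric (by Locality) open neighborhood of $(x,f(x))$ inside $Z'$; and since $W$ lies inside the graph of the homeomorphism $g$, the projections $U':=\pi_1(W)$ and $V':=\pi_2(W)=g(U')$ are open in $X$ and $Y$ respectively, and $W$ is precisely the graph of $g|_{U'}:U'\to V'$.

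The one remaining --- and I expect genuinely nontrivial --- point is to show that $U'$ and $V'$ are themselves loric, so that $g|_{U'}$ actually qualifies as a loric homeomorphism (its graph $W$ is already loric and it is plainly continuous). The subtlety is that a lore is deliberately not required to be closed under projection (cf.\ Example \ref{E: RCF Lore}), so there is no direct way to assert ``$U'$ is a projection of a loric set, hence loric.'' Instead I would exploit that $\pi_1|_W:W\to U'$ is a \emph{homeomorphism} and argue it is loric: using the diagonal axiom (Definition \ref{D: strong set}(\ref{D: strong sets basic})) together with closure of the loric sets under finite products, intersections, and coordinate permutations, the graph of each coordinate of $\pi_1|_W$ can be exhibited as a loric subset of the ambient space. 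The coordinate-by-coordinate criterion (Lemma \ref{L: strong basic}(\ref{L: strong coordinate})) then makes $\pi_1|_W$ loric, and Lemma \ref{L: strong basic}(\ref{L: strong homeo strong}) upgrades $U'$ to a loric set. The symmetric argument with $\pi_2|_W$ handles $V'$, completing the proof.
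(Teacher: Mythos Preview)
Your argument is correct and uses the same two ingredients as the paper --- Genericity for lores applied to the graph of $f$, and the Generic Local Homeomorphism Property --- just in the opposite order (the paper first extracts a loric relatively open $Z'$ inside the graph and then runs GLHP there, whereas you run GLHP first and then intersect with $Z'$). Your final-paragraph worry is misplaced: a loric map is by definition merely a continuous function with loric graph, with no separate condition on the domain or codomain, so once $W$ is loric and $g|_{U'}$ is a homeomorphism you are already finished; that $U'$ and $V'$ are loric is then a \emph{consequence} (Lemma~\ref{L: strong basic}(\ref{L: domain is strong}) and~(\ref{L: strong homeo strong})) rather than a hypothesis you need to verify separately.
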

\begin{proof} By the genericity property of loric sets (Definition \ref{D: strong set}(\ref{D: strong sets gen})), there is an $A$-definable open $X'\subset X$ with $\dim(X-X')<\dim(X)$ such that the restriction $f\restriction_{X'}$ is loric. Then $x$ is generic in $X'$ over $A$. By the generic local homeomorphism property, $f{\restriction_{X'}}$ restricts further to a homeomorphism $f\restriction_U:U\rightarrow V$ between neighborhoods of $U$ of $x$ and $V$ of $f(x)$ (in $X'$ an $Y$, respectively). By locality (Definition \ref{D: strong set}(\ref{D: strong sets loc})), $U$ is loric. By the restriction property (Definition \ref{L: strong basic}(\ref{L: strong rest})), $f\restriction_U$ is loric.
\end{proof}

 \begin{lemma}[Genericity Property of Loric Manifolds]\label{L: manifold genericity}
        Let $X$ be definable over $A$, and let $x\in X$ be generic over $A$. Then $x\in X^{lman}$.
    \end{lemma}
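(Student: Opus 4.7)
The approach is the standard local-coordinate-projection argument, combined with two applications of the strong frontier inequality to control finiteness and openness. Let $n=\dim(X)$ and, after embedding, write $X\subseteq R^m$. Since $\dim(x/A)=n$, some $n$-tuple of coordinates of $x$ is algebraically independent over $A$; let $\pi\colon R^m\to R^n$ be the corresponding coordinate projection. Then $\dim(\pi(x)/A)=n$, so $\pi(X)$ has dimension $n$, and by the fiber-dimension formula the generic fiber of $\pi\colon X\to\pi(X)$ is $0$-dimensional. Consequently, the $A$-definable set $Z\subseteq\pi(X)$ of points with infinite fiber has dimension $<n$ and does not contain the generic element $\pi(x)$; in particular $\pi^{-1}(\pi(x))\cap X$ is finite.

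Let $B:=\pi^{-1}(Z)\cap X$ denote the set of points of $X$ at which $\pi$ has infinite fiber. Then $x\notin B$, and I claim $x\notin\overline B$ as well: if $x\in\overline B$ then $x\in\operatorname{Fr}(B)\subseteq\overline{\operatorname{Fr}(B)}$, and the strong frontier inequality yields $\dim(x/A)<\dim(B)\leq n$, contradicting $\dim(x/A)=n$. Hence $U:=X\setminus\overline B$ is an $A$-definable open neighborhood of $x$ in $X$ on which $\pi$ is everywhere finite-to-one. Since $x\in U\subseteq X$ and $\dim(x/A)=n$, the point $x$ is still generic in $U$ over $A$, so in particular $\dim(U)=n=\dim(\pi(U))$.

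Now apply Lemma~\ref{L: strong local homeo} to the $A$-definable finite-to-one map $\pi|_U\colon U\to\pi(U)$ between sets of equal dimension $n$: this produces open neighborhoods $W\ni x$ in $U$ (hence in $X$, since $U$ is open in $X$) and $V\ni\pi(x)$ in $\pi(U)$ such that $\pi|_W\colon W\to V$ is a loric homeomorphism. To promote $V$ to an open subset of $R^n$, I apply the strong frontier inequality a second time, to $C:=R^n\setminus\pi(U)$. Since $\pi(x)\in\pi(U)$, if $\pi(x)$ failed to lie in the $R^n$-interior of $\pi(U)$ then $\pi(x)\in\overline C\setminus C=\operatorname{Fr}(C)$, forcing $\dim(\pi(x)/A)<\dim(C)\leq n$, a contradiction. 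Shrinking $V$ to $V\cap\operatorname{int}_{R^n}(\pi(U))$ and $W$ to its preimage under $\pi|_W$ yields a loric homeomorphism from an open neighborhood of $x$ in $X$ onto an open subset of $R^n$, witnessing $x\in X^{lman}$.

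The only real technical point is ensuring that the locus $B$ of bad fibers does not accumulate at $x$; this is precisely what the strong frontier inequality delivers, and the same mechanism also settles the openness of $\pi(U)$ in $R^n$ near $\pi(x)$. Beyond these two frontier applications, everything is a direct unpacking of the definitions together with the Generic Local Loric Homeomorphism Property, so I do not anticipate further obstacles.
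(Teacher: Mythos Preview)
Your argument is correct. The paper takes a shorter route: rather than fixing a single coordinate projection and then excising the infinite-fiber locus, it invokes compactness to produce from the outset an $A$-definable \emph{globally} finite-to-one map $f\colon X\to R^n$, and then a single application of Lemma~\ref{L: strong local homeo} with $Y=R^n$ immediately yields a loric homeomorphism from a neighborhood of $x$ in $X$ onto a neighborhood of $f(x)$ in $R^n$---which is automatically open in $R^n$. This sidesteps both of your frontier-inequality steps (removing $\overline B$, and arguing that $\pi(U)$ is open near $\pi(x)$). Your approach is more elementary in that it avoids the compactness construction and makes explicit exactly where a naive projection could fail and how genericity repairs it; the paper's version is terser but leaves the construction of the global finite-to-one map as a routine exercise.
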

    \begin{proof} Let $n=\dim(X)$. Using compactness, one can construct an $A$-definable finite-to-one map $f:X\rightarrow R^n$. Now apply Lemma \ref{L: strong local homeo}.
    \end{proof}

    \begin{lemma}[Locality Property of Loric Manifolds]\label{L: manifold locality} Let $X$ be a definable set. Suppose for some $n$ that $X$ admits an open cover by loric $n$-manifolds. Then $n=\dim(X)$, and so $X$ is a loric $n$-manifold.
    \end{lemma}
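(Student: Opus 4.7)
The conclusion has two parts: the dimension equality $n = \dim(X)$, and the loric manifold property of $X$. The second follows almost trivially from the cover once the first is known, so my plan is to focus on proving $n = \dim(X)$ and to collect the manifold statement at the end.

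Assume $X \neq \emptyset$; otherwise the statement is vacuous. The lower bound $\dim(X) \geq n$ is immediate: any nonempty $U_i$ from the cover is a loric $n$-manifold sitting inside $X$, so $n = \dim(U_i) \leq \dim(X)$.

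For the reverse inequality, the key idea is to play two local loric charts at a generic point of $X$ against each other. Pick a set $A$ of parameters defining $X$ and choose a point $x \in X$ generic over $A$. Applying Lemma \ref{L: manifold genericity} directly to $X$ yields a definable $X$-open neighborhood $U$ of $x$ that is lorically homeomorphic to an open subset of $R^{\dim(X)}$. Meanwhile, $x$ lies in some cover piece $U_i$, and since $U_i$ is a loric $n$-manifold, it contains a $U_i$-open (hence $X$-open) neighborhood $V$ of $x$ lorically homeomorphic to an open subset of $R^n$. Set $W := U \cap V$. This is a nonempty $X$-open neighborhood of $x$, and by restricting the two loric homeomorphisms above to $W$ (via the restriction property in Lemma \ref{L: strong basic}), $W$ is realized as simultaneously lorically homeomorphic to open subsets of both $R^{\dim(X)}$ and $R^n$. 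Since loric homeomorphisms are definable bijections (hence dimension-preserving) and since nonempty open subsets of $R^k$ in our t-minimal setting have dimension $k$, this forces $\dim(X) = n$.

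Once $n = \dim(X)$ is in hand, the manifold property is immediate: each $x \in X$ lies in some $U_i$, a loric $n$-manifold, and so has a $U_i$-open (therefore $X$-open) neighborhood lorically homeomorphic to an open subset of $R^n$, placing $x$ in $X^{lman}$. Combined with $\dim(X) = n$, this gives $X^{lman} = X$, so $X$ is a loric $n$-manifold by definition. I do not anticipate a serious obstacle: the argument is essentially bookkeeping once the ``two charts at a generic point'' idea is in place, and the only technical subtlety is the simultaneous restriction of two loric homeomorphisms to a common open neighborhood, handled cleanly by the restriction and locality properties recorded in Definition \ref{D: strong set} and Lemma \ref{L: strong basic}.
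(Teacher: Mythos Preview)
Your proposal is correct and follows essentially the same approach as the paper: pick a generic point of $X$, obtain one chart from Lemma \ref{L: manifold genericity} and another from the hypothesized cover, restrict both to a common open neighborhood, and compare dimensions via a definable bijection between open subsets of $R^{\dim(X)}$ and $R^n$. The only notable difference is that the paper explicitly verifies the auxiliary fact that a nonempty definable open subset of $R^k$ has dimension exactly $k$ (using that $R$ has no isolated points), whereas you invoke this as known; you may want to add a one-line justification, since this is precisely where the t-minimality hypothesis enters.
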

    \begin{proof}
        Let $d=\dim(X)$. Let $x\in X$ be generic over any set of parameters defining $X$. By Lemma \ref{L: manifold genericity} applied to $x$, there is a definable, relatively open $Y_1\subset X$ containing $x$ and in definable bijection with an open subset of $M^d$. By assumption, there is $Y_2$ satisfying the same property with $d$ replaced by $n$. Shrinking if necessary, we may assume $Y_1=Y_2$. Thus, we get a definable bijection between open subsets of $M^n$ and $M^d$. 

        So to prove the lemma, it suffices to show that whenever $U\subset M^k$ is non-empty, definable, and open (for any $k$), we have $\dim(U)=k$. Indeed, it is clear that $\dim(U)\leq k$ because $U\subset M^k$. Conversely, since $U$ is open, it contains a box $B_1\times...\times B_k$, where each $B_i$ is non-empty and open in $R$. Since $R$ has no isolated points, each $B_i$ is infinite, so each $\dim(B_i)=1$, and thus $\dim(U)\geq\dim(B_1\times...\times B_k)=k$.
    \end{proof}

    \begin{lemma}[Preservation Properties of Loric Manifolds]\label{L: manifold preservation}\hfill
  \begin{enumerate}
        \item If $X$ is a loric $m$-manifold, and $Y$ is a loric $n$-manifold, then $X\times Y$ is a loric $m+n$-manifold.
         \item If $X$ and $Y$ are lorically homeomorphic, and $X$ is a loric $n$-manifold, then so is $Y$.
         \item Any definable open subset of a loric $n$-manifold is a loric $n$-manifold.
    \end{enumerate}
    \end{lemma}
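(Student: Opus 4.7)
The plan is to reduce all three parts to the Locality Property of Loric Manifolds (Lemma \ref{L: manifold locality}): if a definable set admits an open cover by loric $n$-manifolds, then it is itself a loric $n$-manifold. So for each part I would cover the target by open subsets, each lorically homeomorphic to an open subset of some $R^n$; in particular, there is no need to compute dimensions by hand, since the correct value of $n$ is forced by the locality lemma.

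For (1), given $(x,y) \in X \times Y$, I would choose charts $\phi : U \to U' \subseteq R^m$ around $x$ and $\psi : V \to V' \subseteq R^n$ around $y$. Then $U \times V$ is open in $X \times Y$, and the product map $\phi \times \psi$ is a homeomorphism onto the open set $U' \times V' \subseteq R^{m+n}$. The only thing to verify is that $\phi \times \psi$ is loric; this follows because its graph, after a coordinate permutation, equals $\Gamma(\phi) \times \Gamma(\psi)$, and closure under products and coordinate permutations is built into the definition of a lore (Definition \ref{D: strong set}(\ref{D: strong sets basic})).

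For (2), given a loric homeomorphism $f : X \to Y$, I would transport charts: for $y \in Y$ with $x = f^{-1}(y)$, take a chart $\phi : U \to U' \subseteq R^n$ of $X$ around $x$, and form $\phi \circ (f|_U)^{-1} : f(U) \to U'$ as a chart of $Y$ around $y$. The restriction, inverse, and composition properties collected in Lemma \ref{L: strong basic} make this composite a loric homeomorphism, and $f(U)$ is open in $Y$ since $f$ is a homeomorphism. For (3), given a definable open $U \subseteq X$, I would simply intersect the atlas of $X$ with $U$: for each chart $\phi : V \to V' \subseteq R^n$ of $X$, the restriction $\phi|_{V \cap U}$ is a loric homeomorphism onto the open subset $\phi(V \cap U) \subseteq R^n$, yielding a chart of $U$.

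There is no real obstacle here; the lemma is essentially a bookkeeping consequence of the lore axioms together with the locality lemma. The only mildly delicate point is verifying that $\phi \times \psi$ is loric in (1), which boils down to a one-line rearrangement of its graph using the product and coordinate-permutation closure axioms.
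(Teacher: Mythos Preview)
Your proposal is correct and follows essentially the same route as the paper: build charts on the target from the given charts using the basic closure properties of loric maps, and invoke Lemma~\ref{L: manifold locality} to handle the dimension. The only cosmetic difference is in part (1), where the paper verifies that $\phi\times\psi$ is loric via the coordinate-by-coordinate property (Lemma~\ref{L: strong basic}(\ref{L: strong coordinate})) rather than your direct observation that its graph is a coordinate permutation of $\Gamma(\phi)\times\Gamma(\psi)$; both are one-line appeals to the lore axioms.
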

    \begin{proof} (1) uses the coordinate-by-coordinate property of loric maps (Lemma \ref{L: strong basic}(\ref{L: strong coordinate})). (2) uses the composition property of loric maps (Lemma \ref{L: strong basic}(\ref{L: strong composition})). (3) uses Lemma \ref{L: manifold locality} and the restriction property of loric maps (Lemma \ref{L: strong basic}(\ref{L: strong rest})).
    \end{proof}        

\subsection{Purity of Ramification}

The analog of Theorem \ref{T: intro} in \cite{CaHaYe} relied on an axiomatic statement of the purity of the ramification locus from algebraic geometry (\cite[Definition 6.16]{CaHaYe}). That definition fails in o-minimal structures. Instead, we introduce a variant involving loric maps of loric manifolds. The following is modeled after Proposition \ref{ramification prop} (our o-minimal purity of ramification).

\begin{definition}
    $f:X\rightarrow Y$ be a map of Hausdorff topological spaces. We say that $x\in X$ is a \textit{ramification point of $f$}, or that $f$ \textit{ramifies at $x$}, if $f$ is not injective on any neighborhood of $x$. The set of ramification points of $f$ is called the \textit{ramification locus of $f$}, and denoted $\operatorname{Ram}(f)$.
\end{definition}

Note that if $X$, $Y$, and $f$ are definable in a t-minimal Hausdorff geometric structure, then $\operatorname{Ram}(f)$ is definable without additional parameters.

\begin{definition}\label{D: ramification purity}
    Let $(\mathcal R,\tau,\mathcal S)$ be a t-minimal Hausdorff geometric structure with a fixed lore. Let $d\in\mathbb N$. We say that $(\mathcal R,\tau,\mathcal S)$ has \textit{manifold ramification purity of order $d$} if the following holds: let $X$ and $Y$ be loric $n$-manifolds (for the same $n$), and let $f:X\rightarrow Y$ be a finite-to-one loric map. Then either $\operatorname{Ram}(f)=\emptyset$ or $\dim(\operatorname{Ram}(f))\geq n-d$. 
\end{definition}

Later (Proposition \ref{ramification prop}) we will show that o-minimal expansions of real closed fields  (with the full lore) have manifold ramification purity of order 2. That is, for a continuous, definable, finite-to-one map of definable manifolds of the same dimension, the ramification locus is either empty, of codimension 1, or of codimension 2.

In a future paper, we will point out that RCVF has manifold ramification purity of order 2 when $\mathcal S$ consists of the locally semialgebraic sets, but \textit{not} when $\mathcal S$ is the full lore. This is the main reason for introducing lores.

We expect that for expansions of fields, there are no non-trivial examples of manifold ramification purity of order greater than 2. 

\section{The Setting}\label{S: setting}

\begin{assumption}\label{A: R and M} \textbf{From now until the end of Section \ref{construction section}, we fix the following:}

\begin{enumerate}
    \item \textbf{$(\mathcal R,\tau,\mathcal S)$, a t-minimal Hausdorff geometric structure with enough open maps and a fixed lore, which we assume to satisfy manifold ramification purity of order $d_0$ for some fixed $d_0\in\mathbb N$.} For readability, there is no harm in assuming that $(\mathcal R,\tau,\mathcal S)$ is an o-minimal expansion of a real closed field with the full lore (in which case $d_0=2$).
    \item \textbf{$\mathcal M$, a non-locally modular strongly minimal definable $\mathcal R$-relic.} By `definable', we mean that $M\subset R^n$ for some $n$. If $\mathcal R$ eliminates imaginaries (in particular if $\mathcal R$ is an o-minimal expansion of a field), this is a harmless assumption: in that case, every $\mathcal R$-relic is $\mathcal R$-definably isomorphic to a definable $\mathcal R$-relic.
    \item \textbf{We assume that the language of $\mathcal M$ is countable, that $\mathcal M$ has infinite $\acl(\emptyset)$, and that every $\emptyset$-definable set in $\mathcal M$ is also $\emptyset$-definable in $\mathcal R$.} These requirements can always be arranged: first pass to a finite sublanguage witnessing non-local modularity; then satisfy the other two requirements by naming countably many constants. 
    \end{enumerate}
\end{assumption}

 \subsection{Dimension and Rank}

 We follow the various notational conventions in \cite{CasACF0} (Section 3) to distinguish between $\mathcal R$ and $\mathcal M$. So \textit{dimension} and dim will always refer to the geometric (i.e. acl) dimension in $\mathcal R$, and \textit{rank} and rk refer to Morley rank in $\mathcal M$. \textbf{Unless stated otherwise, all tuples are assumed to come from $\mathcal M^{\textrm{eq}}$, and all parameter sets are countable subsets of $M$.} Thus, by $\aleph_1$-saturation, definable sets always have generic points over parameter sets.
 
Fact \ref{F: dim rk} is \cite[Lemma 4.11]{CaHaYe}:
 
 \begin{fact}\label{F: dim rk} Let $X\subset\mathcal M^{\textrm{eq}}$ be $\mathcal M(A)$-definable, and let $a\in X$.
 \begin{enumerate}
     \item $\dim X=\dim M\cdot\rk X$.
     \item $\dim(a/A)\leq\dim M\cdot\rk(a/A)$.
     \item If $a$ is generic in $X$ over $A$, then $a$ is $\mathcal M$-generic in $X$ over $a$.
 \end{enumerate}
 \end{fact}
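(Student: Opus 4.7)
The plan is to first handle the case where all sets and tuples live in $M^n$ (the real sort), where (2) follows from strong minimality together with subadditivity of $\dim$, and (1) follows from (2) by an induction on $\rk X$ using a projection argument; then to extend to $\mathcal M^{\textrm{eq}}$ via a representative argument; and finally to derive (3) as an immediate consequence of (1) and (2).

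For (2) in the real sort, suppose $\bar b = (b_1, \ldots, b_n) \in M^n$ with $\rk(\bar b / A) = r$. Reorder so that $b_1, \ldots, b_r$ are $\mathcal M$-algebraically independent over $A$ and each of $b_{r+1}, \ldots, b_n$ lies in $\acl_{\mathcal M}(A, b_1, \ldots, b_r)$. Since $\mathcal M$ is a relic of $\mathcal R$, every finite $\mathcal M(B)$-definable subset of $M$ is a finite $\mathcal R(B)$-definable subset, and hence $\acl_{\mathcal M}(B) \cap M \subset \acl_{\mathcal R}(B)$ for every $B$. Thus each $b_j$ with $j > r$ is $\mathcal R$-algebraic over $A \cup \{b_1, \ldots, b_r\}$, while each $b_i \in M$ with $i \leq r$ contributes at most $\dim M$ to $\dim$; subadditivity gives
\[ \dim(\bar b / A) = \dim(b_1, \ldots, b_r / A) \leq r \cdot \dim M = \rk(\bar b / A) \cdot \dim M. \]

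For (1) in the real sort, let $X \subset M^n$ be $\mathcal M(A)$-definable of $\rk X = r$, and induct on $r$. The base case $r = 0$ is immediate from the finiteness of $X$ (which passes from $\mathcal M$ to $\mathcal R$). For the inductive step, strong minimality plus compactness yields (after a coordinate permutation) an $\mathcal M(A)$-definable $X_0 \subset X$ with $\rk(X \setminus X_0) < r$, a projection $\pi \colon X_0 \to M^r$ with uniformly finite fibers, and image $\pi(X_0) \subset M^r$ satisfying $\rk(M^r \setminus \pi(X_0)) < r$. By the inductive hypothesis both $X \setminus X_0$ and $M^r \setminus \pi(X_0)$ have $\dim \leq (r-1) \dim M$. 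Using $\aleph_1$-saturation, pick an $\mathcal R$-generic $\bar c \in M^r$ over $A$; the dimension bound places $\bar c \in \pi(X_0)$, and any preimage $\bar b \in X_0$ of $\bar c$ lies in a finite set that is $\mathcal R$-definable over $A \cup \{\bar c\}$, so $\dim(\bar b / A) = \dim(\bar c / A) = r \cdot \dim M$. This gives $\dim X \geq r \cdot \dim M$; the reverse inequality follows from (2) applied to an $\mathcal R$-generic of $X$.

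To extend to $\mathcal M^{\textrm{eq}}$, write $a = [\bar b]_E$ for some $\mathcal M$-definable equivalence relation $E$ on $M^n$ and choose $\bar b$ to be $\mathcal R$-generic in its $E$-class over $A \cup \{a\}$; applying (1) for the real sort to the class $[a]_E$ gives $\dim(\bar b / A, a) = \dim M \cdot \rk(\bar b / A, a)$, so decomposing $\dim$ and $\rk$ over $a$ transfers the bound (2) from $\bar b$ to $a$, and an analogous generic-fiber argument handles (1) for $X \subset \mathcal M^{\textrm{eq}}$. Finally, (3) is immediate: assuming $\dim(a/A) = \dim X$, one chains using (1), (2), and $a \in X$:
\[ \dim M \cdot \rk X = \dim X = \dim(a/A) \leq \dim M \cdot \rk(a/A) \leq \dim M \cdot \rk X, \]
forcing $\rk(a/A) = \rk X$. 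The main obstacle is the $\mathcal M^{\textrm{eq}}$ reduction: one must verify that the $\mathcal R$-generic fiber of $E$ over $a$ has $\dim$ matching $\dim M \cdot \rk$ of the class, which is precisely where the real-sort case of (1) feeds back into the imaginary case and where care is needed to keep the dimension bookkeeping consistent.
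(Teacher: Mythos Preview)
The paper does not prove this fact; it simply cites \cite[Lemma 4.11]{CaHaYe}. Your argument is the standard one and is correct. The real-sort case is clean: (2) is exactly the observation that $\acl_{\mathcal M}\subset\acl$ combined with subadditivity, and your inductive projection argument for (1) works (note that you implicitly use $\dim(M^r)=r\cdot\dim M$, which is just additivity of dimension in geometric structures).

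For the $\mathcal M^{\textrm{eq}}$ reduction, your sketch is right but slightly compressed. When you write ``applying (1) for the real sort to the class $[a]_E$ gives $\dim(\bar b/Aa)=\dim M\cdot\rk(\bar b/Aa)$'', you are silently using (3) in the real sort to pass from $\mathcal R$-genericity of $\bar b$ in the $E$-class to $\mathcal M$-genericity, so that $\rk(\bar b/Aa)=\rk([\bar b]_E)$; this is fine since you already have (1) and (2) in the real sort. The additivity computation then gives (2) for imaginaries as you indicate. For the lower bound in (1) for imaginaries, the cleanest route is: since $\mathcal M$ is strongly minimal, any $\mathcal M$-generic $a\in X$ is $\mathcal M$-interalgebraic over $A$ with some tuple in $M^r$; by homogeneity this interalgebraicity is realized over any $\mathcal M$-generic $\bar c\in M^r$, and in particular over an $\mathcal R$-generic one, giving $\dim(a/A)=\dim(\bar c/A)=r\cdot\dim M$. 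You flag the bookkeeping issue correctly; this is where it sits.
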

 Given $\mathcal R$-definable sets $X\subset Y$, we say that $X$ is \textit{generic} in $Y$ if $\dim(X)=\dim(Y)$, and \textit{large} in $Y$ if $\dim(Y-X)<\dim Y$. We also say that two $\mathcal R$-definable sets are \textit{almost equal} if their intersection is large in each; and we say that $X$ is \textit{almost contained in} $Y$ if $\dim(X-Y)<\dim X$. If $X$ and $Y$ are in addition $\mathcal M$-definable, Fact \ref{F: dim rk} says that the analogous definitions with rank are equivalent, so we do not distinguish them.

 The terms \textit{generic} and \textit{independent} (applied to tuples) are always understood in the sense of $\mathcal R$; we distinguish the analogous notions in $\mathcal M$ (which we use less frequently) with prefixes (e.g. $\mathcal M$-generic). Similarly, we use $\operatorname{acl}$ to denote model-theoretic algebraic closure in $\mathcal R$, and $\acl_{\mathcal M}$ when referring to $\mathcal M$.

 \subsection{Families of Plane Curves}

 For us, a \textit{plane curve} always refers to a rank 1 $\mathcal M$-definable subset of $M^2$. We do not assume plane curves are strongly minimal. A plane curve $C\subset M^2$ is \textit{non-trivial} if both projections $C\rightarrow M$ are finite-to-one, if and only if $C$ does not almost contain $\{m\}\times M$ or $M\times\{m\}$ for any $m\in M$.
 
 If $C$ is a strongly minimal plane curve, we let $\operatorname{Cb}(C)$ denote the canonical base of the generic type of $C$, computed in the structure $\mathcal M$; recall this means a tuple $c\in\mathcal M^{\textrm{eq}}$ (unique up to interdefinability) such that any automorphism $\sigma$ of $\mathcal M$ fixes $c$ if and only if $\sigma(C)$ is almost equal to $C$. Recall that $\operatorname{Cb}(C)\in\mathcal M^{\textrm{eq}}$ always exists (see \cite{PillayBook}, Chapter 2, Claim 2.2). Moreover, since $\mathcal M$ is not locally modular, we have (see \cite{PillayBook}, Proposition 3.2):
 
 \begin{fact}\label{F: nlm via cbs} For any small set $A$ and integer $k$ there is a strongly minimal plane curve $C$ such that $\rk(\operatorname{Cb}(C)/A)\geq k$.
 \end{fact}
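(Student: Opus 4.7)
The plan is to bootstrap the standard characterization of non-local modularity, which is the contrapositive of the Hrushovski--Pillay theorem in \cite{PillayBook}, Proposition 3.2: if every strongly minimal plane curve $C$ in $\mathcal M$ satisfies $\rk(\cb(C))\leq 1$, then $\mathcal M$ is locally modular. Hence non-local modularity produces some strongly minimal plane curve $C_0$ defined over some $B$ with $\rk(\cb(C_0)/B)\geq 2$. Taking the locus of $\cb(C_0)$ over $B$ and then realizing $B$ generically over $A$ (using $\aleph_1$-saturation), we may absorb $B$ into the base and obtain an $A$-definable family $\{C_t:t\in T\}$ of strongly minimal plane curves with $\rk(T)\geq 2$ and, for generic $t\in T$, $\cb(C_t)$ interalgebraic with $t$.

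Given $A$ and $k$, I would then extract by saturation independent generics $t_1,\dots,t_k$ of $T$ over $A$, so that $\rk(t_1,\dots,t_k/A)\geq 2k$. Each $C_{t_i}$ is non-trivial for generic $t_i$ (else $T$ would have rank at most $1$), so composing them as binary correspondences on $M$ yields the definable set
\[
D \;=\; C_{t_1}\circ C_{t_2}\circ \cdots\circ C_{t_k}
\]
consisting of those $(x_0,x_k)\in M^2$ for which there exist $x_1,\dots,x_{k-1}\in M$ with $(x_{i-1},x_i)\in C_{t_i}$ for each $i$. A straightforward induction on $k$, using non-triviality of each factor, shows the fiber of $D$ over a generic $x_0$ is finite, so $D$ has rank $1$. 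Passing to a strongly minimal Morley component if necessary, we may assume $D$ itself is a strongly minimal plane curve defined over $(t_1,\dots,t_k)$.

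The heart of the argument, and the main obstacle, is showing $\rk(\cb(D)/A)\geq k$. The ideal conclusion would be that $\cb(D)$ is interalgebraic with $(t_1,\dots,t_k)$, yielding the stronger bound $\rk(\cb(D)/A)\geq 2k$. The intended strategy: for generic $(x_0,x_k)\in D$, the intermediate chains $(x_1,\dots,x_{k-1})$ witnessing membership form a finite (non-empty) algebraic set over $(x_0,x_k,t_1,\dots,t_k)$; one argues that the generic type of $D$, together with enough independent realizations, pins down each intermediate level up to finite ambiguity, so the projection to the level-$i$ pair $(x_{i-1},x_i)$ recovers a curve interalgebraic with $C_{t_i}$. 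The real subtlety is ruling out a ``telescoping'' collapse in which the $k$-fold composition can be re-expressed via fewer curves, causing $\cb(D)$ to drop in rank. I would handle this inductively: show that composing a compound curve $E$ with a fresh generic $C_t$ (independent over the parameters of $E$) raises the canonical-base rank by $2$, since otherwise the family of such compositions would be a rank-$2$ family of plane curves parameterized by an object of strictly smaller rank, contradicting the faithfulness of the base family $\{C_t:t\in T\}$. This inductive claim reduces to the base case supplied by Pillay's Proposition 3.2 and produces curves of canonical-base rank exceeding any prescribed $k$ over any countable $A$.
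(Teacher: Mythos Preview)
The paper does not prove this statement; it is recorded as a Fact with a bare citation to \cite{PillayBook}, Proposition 3.2, and treated as a known characterization of non-local modularity. So there is no in-paper argument to compare against, and your proposal is an attempt to supply what the paper takes as a black box.

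Your composition strategy is natural, but the inductive step has a real gap. The claim that composing $E$ with a fresh generic $C_t$ raises $\rk(\cb(\cdot))$ by $2$ fails for some almost faithful rank-$2$ families. Take $\mathcal M$ an algebraically closed field and $C_{a,b}=\{(x,ax+b):x\in M\}$: this family is almost faithful of rank $2$, yet $C_{a_2,b_2}\circ C_{a_1,b_1}=C_{a_2a_1,\,a_2b_1+b_2}$, so every iterated composition is again an affine line with canonical base of rank exactly $2$, regardless of $k$. Your proposed contradiction with faithfulness does not materialize: over $e=\cb(E)$ the map $t\mapsto\cb(C_t\circ E)$ is still finite-to-one (indeed a bijection in this example), so the composed family is faithful of rank $2$ over $e$; what fails is that $e\notin\acl(\cb(C_t\circ E))$, so the rank over $\emptyset$ does not accumulate. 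You are in effect conflating $\rk(\cb(C_t\circ E)/e)=2$ with $\rk(\cb(C_t\circ E)/\emptyset)=\rk(e)+2$, and the affine-line family shows these can differ.

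A repair needs more than the faithfulness of the base family. One route: note that closure of a rank-$2$ family under composition produces a group configuration, pass to the interpreted group, and run a separate construction there (using a curve that is not a coset). Another route, closer to the cited reference, proceeds via the pseudoplane characterization of non-local modularity and avoids composition altogether. Either way, the step you correctly flagged as ``the main obstacle'' is not resolved by the argument you give.
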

 
 An $\mathcal M$-definable family $\mathcal C=\{C_t:t\in T\}$ of plane curves is called \textit{almost faithful} if for each $t\in T$ there are only finitely many $t'\in T$ such that $C_t\cap C_{t'}$ is infinite. Given such a family $\mathcal C$, the \textit{graph} of $\mathcal C$ is the set $C=\{(x,t)\in M^2\times T:x\in C_t\}$. Fact \ref{F: nlm via families} is a restatement of Fact \ref{F: nlm via cbs} in this language (see \cite[Lemma 2.42]{CasACF0}):
 
 \begin{fact}\label{F: nlm via families} For each $r\geq 0$ there is an $\mathcal M$-definable almost faithful family of plane curves indexed by a large subset of $M^r$. 
 \end{fact}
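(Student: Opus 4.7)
The plan is to derive Fact \ref{F: nlm via families} directly from Fact \ref{F: nlm via cbs} by converting a strongly minimal plane curve of high canonical-base rank into a large $M^r$-indexed family. First, I apply Fact \ref{F: nlm via cbs} with $A=\emptyset$ and $k=r$ to obtain a strongly minimal plane curve $C\subset M^2$ with $\rk_{\mathcal M}(\operatorname{Cb}(C))\geq r$. By cutting down generically (e.g.\ replacing $\operatorname{Cb}(C)$ by a rank-$r$ subtuple in its $\mathcal M$-algebraic closure inside $\mathcal M^{\textrm{eq}}$), I may assume $\rk_{\mathcal M}(\operatorname{Cb}(C))=r$ exactly.

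Next, embed $C$ in some $\emptyset$-$\mathcal M$-definable family $\{D_u:u\in U\}$ with $U\subset M^n$, such that a generic member is a strongly minimal plane curve (uniform definability of the relevant canonical-base condition in strongly minimal theories lets us do this). Let $E$ be the $\mathcal M$-definable equivalence relation on $U$ given by ``$D_u$ and $D_{u'}$ are almost equal'', so that for generic $u\in U$ the $\mathcal M$-rank of $u/E$ equals $\rk_{\mathcal M}(\operatorname{Cb}(D_u))=r$.

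The main step is to replace the $\mathcal M^{\textrm{eq}}$-parametrization $U/E$ by a parametrization in $M^r$. For this I use the standard fact that in a strongly minimal structure every element of $\mathcal M^{\textrm{eq}}$ is $\mathcal M$-interalgebraic with a finite tuple from $M$: for a generic $u\in U$, choose a tuple $t(u)\in M^r$ that is $\mathcal M$-interalgebraic with $u/E$ (after thinning, $r$ coordinates suffice since $\rk_{\mathcal M}(u/E)=r$). By compactness and $\aleph_1$-saturation this choice can be made uniformly in $u$ on a large $\mathcal M$-definable open subset $U_0\subset U$, producing an $\mathcal M$-definable finite-to-finite correspondence between $U_0/E$ and a set $T\subset M^r$. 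Then $\dim T=r\cdot\dim M$ by Fact \ref{F: dim rk}, so $T$ is large in $M^r$, and setting $C_t:=D_u$ for any $u\in U_0$ corresponding to $t$ yields a well-defined (up to almost-equality) $\mathcal M$-definable family $\{C_t:t\in T\}$.

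Finally, almost faithfulness is automatic from the construction: if $C_t\cap C_{t'}$ is infinite, then $D_u$ and $D_{u'}$ are almost equal, hence $u/E=u'/E$, and the finite-to-finite nature of the correspondence $U_0/E\leftrightarrow T$ forces only finitely many such $t'$ for each $t$. The main obstacle is the third step, where one has to move from an abstract canonical base in $\mathcal M^{\textrm{eq}}$ to an honest tuple in $M^r$ while keeping everything $\mathcal M$-definable in families; the key input there is the interalgebraicity of imaginaries with real tuples in strongly minimal structures combined with uniformity via compactness.
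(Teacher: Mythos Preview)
The paper does not prove this fact; it simply cites \cite[Lemma 2.42]{CasACF0}. Your outline is essentially the standard argument behind that citation, and its overall structure is sound.

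One step needs repair, however. In your first paragraph you write that by ``replacing $\operatorname{Cb}(C)$ by a rank-$r$ subtuple in its $\mathcal M$-algebraic closure'' you may assume $\rk_{\mathcal M}(\operatorname{Cb}(C))=r$. This does not make sense as stated: the canonical base is intrinsic to $C$, and if $\rk(\operatorname{Cb}(C))=k>r$ then no rank-$r$ element of $\acl_{\mathcal M}(\operatorname{Cb}(C))$ has $\operatorname{Cb}(C)$ in its own algebraic closure, so you cannot recover $C$ (even up to almost equality) from such a subtuple. The correct move is to add parameters rather than discard information: choose $s\in\acl_{\mathcal M}(\operatorname{Cb}(C))$ with $\rk(s)=k-r$, so that $\rk(\operatorname{Cb}(C)/s)=r$, and carry out the remainder of the construction over $s$. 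Since the statement allows parameters in ``$\mathcal M$-definable'', this is harmless.

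A smaller point: in your penultimate paragraph you set $C_t:=D_u$ ``for any $u$ corresponding to $t$'' and call this well-defined up to almost equality. To obtain an actual $\mathcal M$-definable family (not merely one defined modulo almost equality), take $C_t$ to be the union of the finitely many $D_u$ with $u$ in correspondence with $t$; this is still a plane curve, and your almost-faithfulness argument then goes through essentially as written.
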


\begin{convention}\label{C: 0 def families} Absorbing parameters if necessary, we will assume for each $r$ that at least one family as in Fact \ref{F: nlm via families} is $\emptyset$-definable in $\mathcal M$ (though we will only need this for $r=1$).
\end{convention}

 The precise relationship between Facts \ref{F: nlm via cbs} and \ref{F: nlm via families} is the following (see \cite[Lemma 2.24]{CasACF0}):
 
 \begin{fact}\label{F: cb vs af} Suppose $\{C_t:t\in T\}$ is almost faithful and $\mathcal M$-definable over $A$, with $\rk(T)=r$. Then for generic $t\in T$ and $S$ any strongly minimal component of $C_t$, we have $\rk(\operatorname{Cb}(S)/A)=r$. In fact, $\cb(S)$ is $\mathcal M$-interalgebraic with $t$ over $A$.
 \end{fact}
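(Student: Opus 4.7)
The plan is to prove the interalgebraicity assertion directly, from which the rank equality follows since $\rk(t/A)=r$ by genericity. For the easy direction I would show $\cb(S) \in \acl_{\mathcal{M}}(At)$. Since $C_t$ has Morley rank one and is $\mathcal{M}(At)$-definable, it decomposes into finitely many strongly minimal components, each $\mathcal{M}(At)$-definable up to almost equality. Any $\mathcal{M}$-automorphism fixing $At$ pointwise permutes these finitely many components, and therefore sends $\cb(S)$ to one of only finitely many values; hence $\cb(S) \in \acl_{\mathcal{M}}(At)$, giving $\rk(\cb(S)/A)\leq r$.

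The substantive direction is $t \in \acl_{\mathcal{M}}(A\cb(S))$. Suppose for contradiction that $\rk(t/A\cb(S)) \geq 1$. Then by strong minimality of $\mathcal{M}$ there are infinitely many realizations $t' \in T$ of $\operatorname{tp}_{\mathcal{M}}(t/A\cb(S))$, and for each such $t'$ I pick an $\mathcal{M}$-automorphism $\sigma$ fixing $A\cb(S)$ with $\sigma(t)=t'$. Then $\sigma(S)$ is a strongly minimal component of $C_{t'} = \sigma(C_t)$, and $\cb(\sigma(S)) = \sigma(\cb(S)) = \cb(S)$. Since having equal canonical bases forces two strongly minimal plane curves to be almost equal, it follows that $S$ and $\sigma(S)$ share a cofinite subset of each, so the intersection $C_t \cap C_{t'} \supseteq S \cap \sigma(S)$ is infinite. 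This yields infinitely many $t' \in T$ with $C_t \cap C_{t'}$ infinite, contradicting almost faithfulness.

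The main obstacle is justifying the step that two strongly minimal plane curves with the same canonical base are almost equal. This is a standard consequence of stability: the canonical base of the (stationary) generic type of a strongly minimal set serves as a canonical parameter for its almost-equality class, so the map sending a strongly minimal set to its canonical base is injective on almost-equality classes. With this tool in place, combining the two directions yields the desired $\mathcal{M}$-interalgebraicity of $\cb(S)$ with $t$ over $A$, and consequently $\rk(\cb(S)/A) = \rk(t/A) = r$.
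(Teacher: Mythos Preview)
Your proof is correct. Note, however, that the paper does not actually give its own proof of this statement: it is recorded as a Fact with a citation to \cite[Lemma 2.24]{CasACF0}, so there is no in-paper argument to compare against. What you have written is essentially the standard proof one finds in that reference: the easy direction uses that $C_t$ has finitely many strongly minimal components permuted by automorphisms fixing $At$, and the substantive direction combines almost faithfulness with the defining property of canonical bases (that $\cb(S)$ determines $S$ up to almost equality).

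One small remark on presentation: invoking an automorphism $\sigma$ of $\mathcal M$ sending $t$ to $t'$ requires sufficient homogeneity, which is fine here since the paper works in an $\aleph_1$-saturated model over countable parameter sets. If you want to avoid this, you can argue directly: since $\cb(S)$ is the canonical base, there is an $\mathcal M(\cb(S))$-definable set almost equal to $S$, and the assertion ``$S$ is almost contained in $C_{t'}$'' is then first-order over $A\cb(S)$ in the variable $t'$; hence any $t'$ realizing $\operatorname{tp}_{\mathcal M}(t/A\cb(S))$ also has $C_t\cap C_{t'}$ infinite. This is equivalent to your argument but sidesteps the appeal to automorphisms.
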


 \subsection{Dualizable Families}
 
 In \cite{CasACF0}, `good' families were defined as those which have `duals.' Since it seems the topic is of use beyond \cite{CasACF0}, we will rename the term as `dualizable':
 
 \begin{definition} An $\mathcal M$-definable almost faithful family $\mathcal C=\{C_t:t\in T\}$ of plane curves is \textit{dualizable} if $T$ is a large subset of $M^2$, and each of the sets $\{t:x\in C_t\}$ (for $x\in M^2$) is either empty or of rank 1. 
 \end{definition}
 
 If $\mathcal C$ is a dualizable family, then we can form its \textit{dual} family by switching the variables: we let $\mathcal C^{\vee}=\{C_x^{\vee}:x\in T^{\vee}\}$, where $C_x^{\vee}=\{t:x\in C_t\}$ and $T^{\vee}$ is the set of $x$ for which $C^{\vee}_x$ is non-empty. The following are proved in \cite{CasACF0} (Lemmas 2.44 and 2.50):
 
 \begin{fact}\label{F: good families} Let $\mathcal C=\{C_t:t\in T\}$ be an $\mathcal M$-definable almost faithful family of plane curves, with graph $C$.
 \begin{enumerate}
     \item If $T$ is a large subset of $M^2$, then there is a dualizable family whose graph is a large subset of $C$. In fact, there are non-generic $\mathcal M$-definable sets (over no additional parameters) $Z_1,Z_2\subset M^2$ such that $C-(Z_1\times M^2)-(M^2\times Z_2)$ is the graph of a dualizable family of plane curves.
     \item If $\mathcal C$ is dualizable, then the dual $\mathcal C^{\vee}$ is also dualizable, and in particular almost faithful.
\end{enumerate}
\end{fact}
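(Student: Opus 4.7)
Both parts hinge on the rank count for the incidence graph $C = \{(x,t) : x \in C_t\} \subset M^2 \times M^2$: since $\rk(T) = 2$ (as $T$ is large in $M^2$) and each $C_t$ has rank $1$, $\rk(C) = 3$. For part (1), I would compute the rank of a generic fiber $C^{\vee}_x = \{t \in T : x \in C_t\}$ by projecting $C$ onto the first factor. The generic fiber rank cannot be $2$ (that would force a generic $C_t$ to equal $M^2$, contradicting $\rk(C_t) = 1$) and cannot be $0$ (that would force $\rk(C) \leq 2$), so it is exactly $1$. Define $Z_1 \subset M^2$ as the (non-generic, $\emptyset$-definable) set of $x$ where $C^{\vee}_x$ is nonempty but not of rank $1$, and $Z_2 \subset M^2$ as $(M^2 - T)$ together with the non-generic locus of $t \in T$ where $C_t \cap Z_1$ fails to be finite or $C_t$ otherwise misbehaves. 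Then the restricted graph $C - (Z_1 \times M^2) - (M^2 \times Z_2)$ is indexed by a large subset of $M^2$, each remaining member is a plane curve almost equal to the original, each dual fiber has rank exactly $1$, and almost faithfulness is inherited from $\mathcal C$ because only non-generic pieces were removed.

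For part (2), assume $\mathcal C$ is dualizable. The dual parameter space $T^{\vee} = \{x : C^{\vee}_x \neq \emptyset\}$ is large in $M^2$ by the same rank calculation, and each $C^{\vee}_x$ ($x \in T^{\vee}$) has rank $1$ by hypothesis. The iterated dual fibers $(C^{\vee})^{\vee}_t = \{x \in T^{\vee} : t \in C^{\vee}_x\} = C_t \cap T^{\vee}$ are large subsets of plane curves, hence of rank $\leq 1$, giving the dualizability condition for $\mathcal C^{\vee}$ \emph{once} almost faithfulness is verified.

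The main obstacle is exactly that almost faithfulness of the dual: for each $x \in T^{\vee}$, only finitely many $x' \in T^{\vee}$ should satisfy $|C^{\vee}_x \cap C^{\vee}_{x'}| = \infty$. My plan here is via canonical bases and Fact \ref{F: cb vs af}. Applied to $\mathcal C$ (almost faithful, parameter rank $2$), the fact says: for generic $t \in T$, any strongly minimal component of $C_t$ has $\cb$ that is $\mathcal M$-interalgebraic with $t$. The goal is to mirror this for $\mathcal C^{\vee}$: for generic $x \in T^{\vee}$, any strongly minimal component $S$ of $C^{\vee}_x$ should satisfy $\cb(S)$ $\mathcal M$-interalgebraic with $x$. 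Granted this, almost faithfulness of $\mathcal C^{\vee}$ follows as usual: if $|C^{\vee}_x \cap C^{\vee}_{x'}| = \infty$, they share a strongly minimal component $S$, so $\cb(S) \in \acl_{\mathcal M}(x) \cap \acl_{\mathcal M}(x')$, and combined with the interalgebraicity this forces $\acl_{\mathcal M}(x) = \acl_{\mathcal M}(x')$, which holds for only finitely many $x' \in M^2$. To establish the $\cb$ claim, I would use a symmetry-of-independence argument: take $x$ generic in $T^{\vee}$ and $t$ generic in $C^{\vee}_x$ over $x$; then $t$ is generic in $T$ and $x$ is generic in $C_t$ over $t$, placing the configuration in a fully symmetric position. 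A direct rank exchange, combined with the $\cb$ fullness of $\mathcal C$ provided by Fact \ref{F: cb vs af}, forces $\cb(C^{\vee}_x)$ to have rank $2$. The delicate step is careful bookkeeping of finite ambiguity between a plane curve and its strongly minimal components, but this is a routine calculation once the rank equalities are set up.
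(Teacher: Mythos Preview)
The paper does not prove this statement; it is quoted as a Fact with a citation to \cite{CasACF0} (Lemmas 2.44 and 2.50). So there is no in-paper proof to compare against, and your task amounts to reconstructing the cited argument.

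Your sketch for part (1) is essentially right: the rank-3 count on the graph forces generic dual fibers to have rank exactly 1, and trimming the exceptional loci on each side gives the dualizable family. The phrase ``otherwise misbehaves'' is doing real work, though---you should spell out that $Z_2$ must absorb those $t$ for which $C_t\cap Z_1$ is infinite, and check that this set is non-generic.

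For part (2), your plan has a genuine gap and is also more circuitous than necessary. The gap: your canonical-base interalgebraicity claim is only argued for \emph{generic} $x$, but almost faithfulness is a statement about \emph{every} $x\in T^\vee$. You never say how non-generic $x$ are handled. The detour: invoking Fact~\ref{F: cb vs af} and a ``rank exchange'' obscures the one-line mechanism that actually drives the argument. The direct route (which is what the cited lemmas do, and which works uniformly for all $x$) is this: fix $x\in T^\vee$, let $S$ be a strongly minimal component of $C^\vee_x$, and choose $t,t'\in S$ independent generics over $\cb(S)$. Then $\rk(t'/t)\geq 1$, so by almost faithfulness of $\mathcal C$ the intersection $C_t\cap C_{t'}$ is finite. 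But any $x'$ with $C^\vee_{x'}$ almost containing $S$ must lie in $C_t\cap C_{t'}$, so there are only finitely many such $x'$. Since $C^\vee_x$ has finitely many strongly minimal components, almost faithfulness of $\mathcal C^\vee$ follows. Note that this uses the almost faithfulness of $\mathcal C$ directly, with no appeal to canonical bases, and makes no genericity assumption on $x$.
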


Finally, we will need a strengthening of dualizability, called \textit{excellence} in \cite{CasACF0} and \cite{CaHaYe}:

\begin{definition} Let $\mathcal C=\{C_t:t\in T\}$ be a dualizable family of plane curves with graph $C\subset M^4$. We say that $\mathcal C$ is \textit{excellent} if each of the curves in $\mathcal C$ and $\mathcal C^\vee$ is non-trivial -- or equivalently, each of the four projections $C\rightarrow M^3$ is finite-to-one.
\end{definition}

It is proven in \cite{CasACF0} (Proposition 2.52) that every non-locally modular strongly minimal structure admits an excellent family. Thus, we also stipulate:

\begin{convention}\label{C: 0 def excellent family} We assume throughout that there is an $\mathcal M(\emptyset)$-definable excellent family of plane curves in $\mathcal M$. 
\end{convention}

 \subsection{Weak Detection of Closures}
 The main step in the proof in \cite{CasACF0} is a technical result on the detection (in the language of $\mathcal M$) of certain closure points of $\mathcal M$-definable sets (\cite[Proposition 8.6]{CasACF0}). This result was subsequently treated axiomatically in \cite{CaHaYe}, under the name `weak detection of closures' (\cite[Theorem 4.10]{CaHaYe}). Since we assume $(\mathcal R,\tau)$ has enough open maps, we get this statement directly from \cite{CaHaYe}:
 
 \begin{fact}\label{F: closure} $\mathcal M$ weakly detects closures: Let $X\subset M^n$ be $\mathcal M$-definable over $A$, of rank $r$. Let $\hat x=(x_1,...,x_n)\in M^n\cap\overline X$. Assume that each $x_i$ is generic in $M$ over $\emptyset$. Then $\rk(\hat x/A)\leq r$. Moreover, if $\rk(\hat x/A)=r$, and $i\neq j$ are such that for any generic $a=(a_1,...,a_n)\in X$ over $A$ the coordinates $a_i,a_j$ are $\mathcal M$-independent over $A$, then the coordinates $x_i,x_j$ are $\mathcal M$-independent over $A$.
 \end{fact}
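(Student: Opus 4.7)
The plan is to establish the first assertion by induction on the rank $r$ of $X$, and then to obtain the moreover clause by a projection argument applied to the first assertion. The base case $r=0$ is immediate: a finite $\mathcal M$-definable set consists of $\acl_{\mathcal M}(A)$-points, which are $\acl(A)$-points in $\mathcal R$, hence closed in the Hausdorff topology; so $\overline X = X$ and the rank bound is trivial. If $\hat x \in X$ at any stage, we are also done, so throughout the argument we may assume $\hat x \in \operatorname{Fr}(X)$.

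For the inductive step, the Strong Frontier Inequality gives $\dim(\hat x/A) < \dim(X) = r\cdot\dim(M)$, which via Fact \ref{F: dim rk} already yields a non-strict bound on rank but not the sharp one we want. The idea is to upgrade this using the plane curve families guaranteed by Fact \ref{F: nlm via families} and Convention \ref{C: 0 def families}. Specifically, I would attempt to show that if $\rk(\hat x / A)$ were strictly greater than $r$, then I could pick an independent generic excellent plane curve through a pair of coordinates of $\hat x$ (in the sense of $\mathcal M$), apply the Generic Local Homeomorphism Property to move $\hat x$ into a finite-to-one image of $X$, and then invoke the Baire Category Axiom to find approximating generics in $X$ whose $\mathcal M$-rank over $A$ is bounded by $r$. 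The resulting ``approximating $\mathcal M$-type'' should force $\rk(\hat x/A) \le r$ by compactness and the inductive hypothesis applied to the lower-rank strata produced by the stratification.

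For the moreover clause, assuming $\rk(\hat x/A) = r$ and the genericity-independence hypothesis on coordinates $i,j$, I would consider the projection $\pi_{i,j}\colon X \to M^2$ and its closure. By the independence assumption, a generic fiber of $\pi_{i,j}$ over its image has rank $r-2$, so $\pi_{i,j}(X)$ has rank $2$. Since $(x_i, x_j) \in \overline{\pi_{i,j}(X)}$ with each of $x_i, x_j$ generic in $M$, the first part of the theorem applied to $\pi_{i,j}(X)$ gives $\rk(x_i,x_j/A) \le 2$; combined with the equality $\rk(\hat x/A) = r$, the additivity of Morley rank forces $x_i, x_j$ to be $\mathcal M$-independent over $A$.

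The main obstacle, and the genuinely deep content of the statement, is the inductive step: converting the topological $\mathcal R$-dimension inequality into a sharp $\mathcal M$-rank inequality. This is exactly the content of \cite[Proposition 8.6]{CasACF0} and \cite[Theorem 4.10]{CaHaYe}, where the ``enough open maps'' hypothesis and the intricate interplay between plane curve families, dualizability, and the Generic Local Homeomorphism Property do the essential work. I expect that any self-contained proof would have to reconstruct a substantial portion of that machinery, so in practice the right move here is to cite the result.
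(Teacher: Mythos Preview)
Your bottom line --- that the right move is to cite \cite[Theorem 4.10]{CaHaYe} --- is exactly what the paper does; the statement is recorded as a Fact with no proof given, relying entirely on the axiomatic treatment in \cite{CaHaYe} under the ``enough open maps'' hypothesis.

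Two points in your sketch are wrong, however, and would mislead if taken as a proof outline. First, the Strong Frontier Inequality combined with Fact~\ref{F: dim rk} does \emph{not} yield any upper bound on $\rk(\hat x/A)$: the inequality in Fact~\ref{F: dim rk}(2), namely $\dim(\hat x/A)\leq\dim(M)\cdot\rk(\hat x/A)$, runs the wrong way, so bounding $\dim(\hat x/A)$ from above only bounds $\rk(\hat x/A)$ from \emph{below}. The entire difficulty of the result is precisely that a dimension drop does not automatically give a rank drop. Second, your projection argument for the moreover clause does not close: from $\rk(x_ix_j/A)\leq 2$ and $\rk(\hat x/A)=r$ you cannot deduce $\rk(x_ix_j/A)=2$ by additivity alone, because you have no control on $\rk(\hat x/Ax_ix_j)$ --- the pair $(x_i,x_j)$ is not known to be generic in $\pi_{i,j}(X)$, so the fiber over it need not have rank $r-2$. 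The argument in \cite{CaHaYe} handles this clause with the same machinery that drives the first part, not by a separate elementary projection.
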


\section{Warm-up: The Case of Groups}\label{group section}

We now move toward the proof of Theorem \ref{T: intro}. Recall that we assume $(\mathcal R,\tau,\mathcal S)$ has manifold ramification purity of some fixed order $d_0$. In this language, our general goal is to show that $\dim(M)\leq d_0$. For technical reasons, we will not realize this goal in complete generality. The potential problem is that, to combine our section-by-section results into the main theorem, one needs an adequate description of interpretable groups in $\mathcal R$. As this is no problem in the o-minimal case, we will not worry about it, and will simply give our section-by-section results.

In this section, we prove $\dim(M)\leq d_0$ assuming $M$ carries a definable `loric topological' group structure with finite 2-torsion (Theorem \ref{T: group case}). This will serve as a good model for the ensuing more general argument. In fact, however, we will need Theorem \ref{T: group case} in the end, so this section is not merely included for presentation.

Later on, we will abstract a general strategy from the proof of Theorem \ref{T: group case}. However, the abstraction process is not immediately clear. In fact, the proof below can be obtained from the general strategy via some simplifications that are only available in the presence of a group. We will not dwell on this, and will simply present the simplified argument (as it is rather slick), saving the abstraction process for later sections.

\begin{assumption}\label{A: group ass} \textbf{Throughout Section \ref{group section}, we assume that there is an $\mathcal M(\emptyset)$-definable group structure on the set $M$, and that the group operation and inverse are given by loric maps (in particular, $M$ is loric).}
\end{assumption}

\begin{theorem}\label{T: group case} Under Assumption \ref{A: group ass}, we have $\dim(M)\leq d_0$.
\end{theorem}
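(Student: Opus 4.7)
The plan is to build a loric, finite-to-one map $f : X \to Y$ between loric manifolds of the same $\mathcal R$-dimension $2\dim M$, force a ramification point via a cheap group involution, and then combine manifold ramification purity of order $d_0$ (giving $\dim\operatorname{Ram}(f) \ge 2\dim M - d_0$) with weak detection of closures, Fact \ref{F: closure} (giving $\dim\operatorname{Ram}(f) \le \dim M$), to conclude $\dim M \le d_0$.

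\textbf{Setup.} Using Convention \ref{C: 0 def excellent family}, pick an $\mathcal M(\emptyset)$-definable non-trivial strongly minimal plane curve $C \subset M^2$ with $0 \in C$, $C \ne -C$, and $0 \in C^{lman}$ (the last holding for a generic choice of $C$ through $0$ by Lemma \ref{L: manifold genericity}). Set
\[
f : C \times (-C) \longrightarrow M^2, \qquad (u,v) \mapsto u-v.
\]
Since $+,-$ are loric on $M$, the map $f$ is loric, and $M^2$ is a loric $(2\dim M)$-manifold (the group translates promote the generic manifold point of Lemma \ref{L: manifold genericity} to an atlas). After restricting $C$ and $-C$ to their loric manifold loci and shrinking to a dense open set on which $f$ has finite fibers, the source is a loric $(2\dim M)$-manifold (Lemma \ref{L: manifold preservation}) on which $f$ is finite-to-one.

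\textbf{Lower bound: $(0,0)\in\operatorname{Ram}(f)$.} The map $f$ is invariant under the $\mathcal M(\emptyset)$-definable involution $\sigma : (u,v) \mapsto (-v,-u)$ of $C \times (-C)$, whose fixed locus $\Delta = \{(u,-u) : u \in C\}$ has $\mathcal R$-dimension only $\dim M$ and is therefore nowhere dense in the $(2\dim M)$-dimensional $C \times (-C)$. Hence every neighborhood of the fixed point $(0,0) \in \Delta$ contains some $(u,v) \notin \Delta$, whose $\sigma$-image $(-v,-u)$ is a distinct point in the same neighborhood with $f(u,v) = f(-v,-u)$. Thus $(0,0) \in \operatorname{Ram}(f)$, and manifold ramification purity of order $d_0$ yields $\dim\operatorname{Ram}(f) \ge 2\dim M - d_0$.

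\textbf{Upper bound via closure detection.} Consider the $\mathcal M(\emptyset)$-definable ``twin'' set
\[
X'' = \bigl\{\bigl((u_1,v_1),(u_2,v_2)\bigr) \in (C \times -C)^2 : u_1-v_1 = u_2-v_2,\ (u_1,v_1) \ne (u_2,v_2)\bigr\}.
\]
A parameter count gives $\rk(X'') = 2$: $u_1,u_2$ vary freely in $C$ (rank $2$), then the constraint $v_1,\,v_1+(u_2-u_1) \in -C$ pins $v_1$ to a finite set (rank $0$) with $v_2$ determined. Crucially, $u_1$ and $u_2$ are $\mathcal M$-independent at a generic point of $X''$. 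By definition, any $(u,v) \in \operatorname{Ram}(f)$ is a limit of twin pairs, so $((u,v),(u,v)) \in \overline{X''}$. Pick a generic $\hat y = (u,v) \in \operatorname{Ram}(f)$; because $\operatorname{Ram}(f) \supseteq \Delta$ and $\Delta$ projects surjectively onto the non-trivial $C$, every coordinate of $\hat y$ is $M$-generic over $\emptyset$. Fact \ref{F: closure} applied to $(\hat y,\hat y) \in \overline{X''}$ gives $\rk(\hat y) = \rk(\hat y,\hat y) \le 2$; equality would force, by the ``moreover'' clause, a coordinate of $u_1$ and the corresponding coordinate of $u_2$ to be $\mathcal M$-independent at $(\hat y,\hat y)$, where they are literally equal. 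Hence $\rk(\hat y) \le 1$, and by Fact \ref{F: dim rk} we obtain $\dim\operatorname{Ram}(f) = \dim(\hat y) \le \dim M$. Combined with the lower bound, $\dim M \le d_0$.

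\textbf{Main obstacle.} The delicate part is the upper bound: choosing the twin set $X''$ so that $\rk(X'') = 2$ with $u_1,u_2$ generically $\mathcal M$-independent, and ensuring the generic ramification point has $M$-generic coordinates so that the full ``moreover'' clause of Fact \ref{F: closure} forces the strict inequality $\rk(\hat y) \le 1$. The lower bound, by contrast, is almost a one-liner once one writes down the involution $\sigma$.
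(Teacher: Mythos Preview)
Your approach is essentially the paper's: after the change of variables $v \leftrightarrow -v$, your map $(u,v) \mapsto u-v$ on $C \times (-C)$ becomes the paper's $+: C^2 \to M^2$, and your involution $\sigma$ becomes the coordinate swap $(x,y)\mapsto(y,x)$, so your ramification witness is exactly the paper's $(0,u)$, $(u,0)$ argument rephrased.

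However, there is a genuine gap in the upper bound. You invoke Fact \ref{F: closure} at a generic point $\hat y$ of $\operatorname{Ram}(f)$, which requires every $M$-coordinate of $\hat y$ to be generic in $M$ over $\emptyset$. Your justification --- ``because $\operatorname{Ram}(f) \supseteq \Delta$ and $\Delta$ projects surjectively onto the non-trivial $C$'' --- does not establish this: the containment $\Delta \subseteq \operatorname{Ram}(f)$ says nothing about the coordinates of a generic point of the \emph{larger} set $\operatorname{Ram}(f)$, which is only $\mathcal R$-definable and could well have its top-dimensional locus sitting over non-generic $M$-coordinates. The paper handles this by first proving a group-translated version of closure detection (Lemma \ref{L: closure for groups}): one translates $\hat y$ and the twin set by an independent generic $g$, applies Fact \ref{F: closure} there (where coordinate-wise genericity is automatic), and translates back. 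You need this trick; it is precisely the payoff of having a group in this section.

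There are also smaller technical issues. You should require $\operatorname{Stab}(C)$ finite (available from non-local modularity) and $C \cap (-C)$ finite (not merely $C \ne -C$); the paper arranges both in Lemma \ref{L: nonaffine plane curve}, using the finite $2$-torsion hypothesis for the latter. And your twin set $X''$ needs the extra condition $u_1 - u_2 \notin \operatorname{Stab}(C)$ (the analog of the paper's condition (3) on $D$): without it, $X''$ can have rank-$2$ components on which $u_1$ and $u_2$ are $\mathcal M$-interalgebraic rather than independent, so the ``moreover'' clause of Fact \ref{F: closure} fails for some generics of $X''$ and cannot be invoked as you do.
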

\begin{proof} Since strongly minimal groups are abelian, we use additive notation for the group operation, and thus assume $\mathcal M$ is an expansion of the group $(M,+)$. Let 0 be the identity of $(M,+)$. Abusing notation, we will also write 0 for the identity in $M^2$. Before we start, let us note:

\begin{lemma}\label{L: group is manifold}
\begin{enumerate}
    \item The difference map $(x,y)\mapsto x-y$ is loric.
    \item For each $a\in M$, the translation map $x\mapsto x+a$ is loric.
    \item $M$ is a loric $\dim(M)$-manifold.
\end{enumerate}
\end{lemma}
\begin{proof} 
\begin{enumerate}
    \item We use Lemma \ref{L: strong basic} repeatedly. By the projection property, $(x,y)\mapsto x$ and $(x,y)\mapsto y$ are loric. By the composition property (and the fact that the inverse operation is loric), $(x,y)\mapsto -y$ is loric. By the coordinate-by-coordinate property, $(x,y)\mapsto(x,-y)$ is loric. Now compose with the group operation to get $(x,y)\mapsto x-y$.
    \item The graph of translation by $a$ is the preimage of $-a$ under the difference map. So use (a), the preimage property, and the fact that finite sets are loric (Lemma \ref{L: strong basic}).
    \item By Lemma \ref{L: manifold genericity}, every generic point of $M$ belongs to $M^{lman}$. In particular (looking at one such point), there is an open subset $U\subset M$ which is a loric $\dim(M)$-manifold. By (b) and loric homeomorphism invariance (Lemma \ref{L: manifold preservation}(2)), every translate $U+x$ for $x\in M$ is a loric $\dim(M)$-manifold. The set of all translates forms an open cover of $M$. So by locality (Lemma \ref{L: manifold locality}), $M$ is a loric $\dim(M)$-manifold. 
\end{enumerate}
\end{proof}

Let us now give an imprecise summary of the proof. First, we construct an (approximately) $\mathcal M$-definable loric map $f$ satisfying the hypotheses of Definition \ref{D: ramification purity} and having at least one ramification point. Roughly speaking, $f$ will be the sum map $+:C^2\rightarrow M^2$ on an appropriately chosen plane curve $C$ containing 0. The witness to ramification will be the point $0+0=0$, and the ramification will follow from the equation $x+0=0+x=x$ for $x$ near 0. 

Once we have the map $f$, we will exploit the tension coming from two bounds on the value $\dim(\operatorname{Ram}(f))$: a lower bound is given by ramification purity (Definition \ref{D: ramification purity}), and an upper bound will follow from weak detection of closures (Fact \ref{F: closure}). Combining these two bounds yields an inequality simplifying to $\dim(M)\leq d_0$.

Because we work near the origin, in applying Fact \ref{F: closure} we cannot insist that each coordinate of the specified point be generic in $M$. Thus it will be convenient to know that, in the presence of the group structure, coordinate-wise genericity can be dropped:

\begin{lemma}\label{L: closure for groups} Let $X\subset M^n$ be $\mathcal M$-definable over $A$ of rank $r$, and let $\hat x=(\hat x_1,...,\hat x_n)\in M^n\cap\overline X$. Then $\rk(\hat x/A)\leq r$. Moreover, if $\rk(\hat x/A)=r$, and $i\neq j$ are such that for all generic $x=(x_1,...,x_n)\in X$ over $A$ the coordinates $x_i,x_j$ are $\mathcal M$-independent over $A$, then $\hat x_i$ and $\hat x_j$ are $\mathcal M$-independent over $A$.
\end{lemma}
\begin{proof} By translating by a generic, then applying Fact \ref{F: closure}, then translating back.

More precisely, let $g\in M^n$ be generic over $A$. Then $Y=X+g$ is $\mathcal M$-definable over $Ag$ of rank $r$, and $\hat y=\hat x+g\in\overline Y$. Moreover, it is easy to see by the genericity of $g$ that each $\hat y_i$ is generic in $M$ over $\emptyset$. Now apply Fact \ref{F: closure} to $\hat y$ and $Y$, with parameter set $Ag$. Since $\hat x$ and $\hat y$ are (coordinate by coordinate) $\mathcal M$-interdefinable over $Ag$, it follows easily that the conclusion of Fact \ref{F: closure} still holds of $\hat x$ and $X$ over $Ag$. But since $g$ is independent from $A\hat x$ by definition, it moreover follows easily that the $g$ can be dropped from the parameter set, thereby reducing to the original statement of the lemma.
\end{proof}

We now choose the appropriate plane curve $C$ described above.

\begin{definition} Let $C\subset M^2$ be a strongly minimal place curve.
\begin{enumerate}
    \item By the \textit{stabilizer} of $C$, denoted $\operatorname{Stab}(C)$, we mean the ($\mathcal M$-definable) subgroup of $M^2$ consisting of those $g$ such that $C+g$ is almost equal to $C$. 
    \item By $-C$, we mean the set $\{-x:x\in C\}$. Similarly, given $t\in M^2$, the sets $t+C=C+t$ and $t-C$ are the translates of $C$ and $-C$ (respectively) by $t$.
\end{enumerate}
\end{definition}

\begin{lemma}\label{L: nonaffine plane curve} There is a strongly minimal plane curve $C\subset M^2$ satisfying the following:
\begin{enumerate}
    \item $\operatorname{Stab}(C)$ is finite.
    \item $C\cap-C$ is finite.
    \item $0\in C^{lman}$.
\end{enumerate}
\end{lemma}
\begin{proof} It is automatic from non-local modularity that we can satisfy (1) \cite[Lemma 7.2 and Theorem 7.3(2)]{CasHas}. Now fix $C\subset M^2$ satisfying (1), and without loss of generality assume it is $\mathcal M(\emptyset)$-definable. Fix $a\in C$ generic, and let $C'=C-a$. Translation does not affect the stabilizer, so we still have (1). Moreover, since $a$ is generic, we have $a\in C^{lman}$ (Lemma \ref{L: manifold genericity}), and thus $a-a=0\in(C')^{lman}$ (Lemmas \ref{L: manifold preservation}(2) and \ref{L: group is manifold}(1)). So we get (3).

For (2), suppose $C'\cap-C'$ is infinite. By strong minimality, the two sets are almost equal. Thus $C-a$ is almost equal to $a-C$, or equivalently $-C$ is almost equal to $C-2a$. Now the set of $b$ with $-C$ almost equal to $C-b$ is a coset of $\operatorname{Stab}(C)$, so is finite. Moreover, this set is $\mathcal M(\emptyset)$-definable (because $C$ is). So $2a\in\acl_{\mathcal M}(\emptyset)$. Since $a\in M$ is generic, we get $a\notin\acl_{\mathcal M}(2a)$, and thus $x\mapsto 2x$ is not finite-to-one on $M$. By strong minimality, $M$ is a 2-torsion group, contradicting Assumption \ref{A: group ass}.
\end{proof}

Fix $C$ as in Lemma \ref{L: nonaffine plane curve}, and without loss of generality assume $C$ is $\mathcal M(\emptyset)$-definable. We next construct the function $f$ described in the summary. Naively, we want $f$ to be $+:C^2\rightarrow M^2$. In the end, we need to restrict $+$ to a neighborhood of $(0,0)$ satisfying the hypotheses of ramification purity. Thus, strictly speaking, we sacrifice the $\mathcal M$-definability of $f$; but the general strategy will still work.

First, we let $Z$ be the set of all $z\in M^2$ such that the equation $x+y=z$ has infinitely many solutions with $x,y\in C$. Rewriting $x+y=z$ as $x-z=-y$, one gets that $z\in Z$ if and only if $C-z\cap-C$ is infinite, which by strong minimality is equivalent to the almost equality of $C-z$ and $-C$. Now as in the proof of Lemma \ref{L: nonaffine plane curve}, the finiteness of $\operatorname{Stab}(C)$ implies that $Z$ is finite (namely it is either empty or a coset of $\operatorname{Stab}(C)$). Moreover, note that $0\notin Z$ by Lemma \ref{L: nonaffine plane curve}(2). 

Lemmas \ref{L: addition finite to one} and \ref{L: ramification at 0} set up our application of ramification purity:

\begin{lemma}\label{L: addition finite to one} Restricted to a small enough neighborhood of $(0,0)\in C^2$, the map $+:C^2\rightarrow M^2$ is a finite-to-one loric map of loric $(2\cdot\dim M)$-manifolds, whose image is disjoint from $Z$.
\end{lemma}
\begin{proof} Loricness is automatic by the restriction property (Lemma \ref{L: strong basic}(\ref{L: strong rest})). It is also automatic that the target $M^2$ is a loric $(2\cdot\dim(M))$-manifold, by Lemma \ref{L: group is manifold} and the preservation of loric manifolds under products (Lemma \ref{L: manifold preservation}(1)).

We now find an appropriate neighborhood of $(0,0)$ satisfying the other requirements. Since $Z$ is finite and $0\notin Z$, we can first delete the preimage of $Z$. This produces a finite-to-one map with image disjoint from $Z$. 

It remains only to find a small enough neighborhood of $(0,0)$ which is a loric $(2\cdot\dim(M))$-manifold. But $\dim(C^2)=2\cdot\dim(M)$ (by Fact \ref{F: dim rk}), so it suffices to show that $(0,0)\in(C^2)^{lman}$; and this follows from $0\in C^{lman}$ (Lemma \ref{L: nonaffine plane curve}(3)) and the preservation of loric manifolds under products (Lemma \ref{L: manifold preservation}(1)).
\end{proof}

\begin{notation} Let $f:X\rightarrow M^2$ denote the restriction of $+$ to a small neighborhood $X$ of $(0,0)$ in $C^2$ satisfying the conclusions of Lemma \ref{L: addition finite to one}.
\end{notation}

\begin{lemma}\label{L: ramification at 0} $(0,0)$ is a ramification point of $f$.
\end{lemma}

\begin{proof} 
Fix any neighborhood $V$ of $(0,0)$ in $X$. We show that $+:V\rightarrow M^2$ is not injective.

We may assume $V=U^2$ for some neighborhood $U$ of $0$ in $C$. Since $0\in C^{lman}$, we may assume $U$ is a loric $\dim(C)$-manifold (Lemma \ref{L: manifold preservation}(3)). In particular, $U$ is definable and $\dim(U)=\dim(C)>0$.

Now since $\dim(U)>0$, there is some element $0\neq u\in U$. But then $(0,u)\neq(u,0)\in U^2$, and $0+u=u+0=u$, witnessing ramification.
\end{proof}

We now apply purity of ramification to obtain our `lower bound' on $\dim(\operatorname{Ram}(f))$:

\begin{lemma}\label{L: lots of ramification in group} $\dim(\operatorname{Ram}(f)\geq 2\cdot\dim(M)-d_0$.
\end{lemma}
\begin{proof} By the choice of $X$, $\dim(X)=2\cdot\dim(M)$ (see Lemma \ref{L: addition finite to one}). So by ramification purity (Assumption \ref{A: R and M}(1) and Definition \ref{D: ramification purity}), $\operatorname{Ram}(f)$ is either empty or of dimension at least $2\cdot\dim(M)-d_0$. By Lemma \ref{L: ramification at 0}, $\operatorname{Ram}(f)$ is non-empty, so $\dim(\operatorname{Ram}(f))\geq 2\cdot\dim(M)-d_0$.
\end{proof}

We now move toward an upper bound on $\dim(\operatorname{Ram}(f))$ using weak detection of closures. We will work with the following set:

\begin{notation} Let $D$ be the set of $(x,y,x',y')\in C^4$ such that:
\begin{enumerate}
    \item $(x,y)\neq(x',y')$
    \item $x+y=x'+y'\notin Z$.
    \item $x-x'\notin\operatorname{Stab}(C)$.
\end{enumerate}
\end{notation}

$D$ is $\mathcal M(\emptyset)$-definable and carries a natural sum map, given by $(x,y,x',y')\mapsto x+y=x'+y'$. Since $D$ is defined only away from $Z$ (i.e. item (2) of the definition), this sum map is finite-to-one. Thus $\rk(D)\leq\rk(M^2)=2$. Now the main point is:

\begin{lemma}\label{L: ram pt in closure groups} Let $(x,y)\in C^2$. If $(x,y)\in\operatorname{Ram}(f)$, then $(x,y,x,y)\in\overline D$.
\end{lemma}
\begin{proof} Let $U\times V\times U\times V$ be a neighborhood of $(x,y,x,y)$ in $C^4$. We find an element of $(U\times V\times U\times V)\cap D$.

By assumption, $(x,y)$ belongs to $X$, which is open in $C^2$; so by shrinking $U$ and $V$, we may assume $U\times V\subset X$. In particular, if $u\in U,v\in V$ then $u+v\notin Z$ (by the choice of $X$, see Lemma \ref{L: addition finite to one}).

Since $\operatorname{Stab}(C)$ is also finite, we may further assume that the difference map on $U^2$ does not attain any non-zero elements of $\operatorname{Stab}(C)$ (that is, that if $u\neq u'\in U$ then $u-u'\notin\operatorname{Stab}(C)$).

Finally, since $f$ ramifies at $(x,y)$, there are $(u,v)\neq(u',v')\in U\times V$ with $u+v=u'+v'$. Then the above two paragraphs show that $(u,v,u',v')\in D$, as desired. 
\end{proof}

We now obtain our upper bound on $\dim(\operatorname{Ram}(f))$:

\begin{lemma}\label{L: noninjective point is dependent groups} $\dim(\operatorname{Ram}(f))\leq\dim(M)$.
\end{lemma}
\begin{proof}
    Suppose $f$ and $X$ are $\mathcal R(A)$-definable, and let $(\hat x,\hat y)\in\operatorname{Ram}(f)$ be generic over $A$. It suffices to show that $\dim(\hat x\hat y/A)\leq\dim(M)$. We will in fact show $\rk(\hat x\hat y/\emptyset)\leq 1$. Then by Fact \ref{F: dim rk}, $$\dim(\hat x\hat y/A)\leq\dim(\hat x\hat y/\emptyset)\leq\dim(M)\cdot 1=\dim(M),$$ so this is certainly enough.
    
    To show $\rk(\hat x\hat y)\leq 1$, we work in cases according to the value $\rk(D)$. From above, $\rk(D)\leq 2$. First suppose $\rk(D)\leq 1$. By Lemma \ref{L: ram pt in closure groups}, $(\hat x,\hat y,\hat x,\hat y)\in\overline D$. Then by Lemma \ref{L: closure for groups}, $\rk(\hat x\hat y)\leq\rk(D)\leq 1$, as desired.

    Now suppose $\rk(D)=2$. By similar reasoning (or the fact that $\hat x,\hat y\in C$), we have $\rk(\hat x\hat y)\leq 2$. So, arguing toward a contradiction, assume $\rk(\hat x\hat y)=2=\rk(D)$. We will apply the final clause of weak detection of closures (Lemma \ref{L: closure for groups}). First we check:

    \begin{claim}
        For generic $(x,y,x',y')=(x_1,x_2,y_1,y_2,x_1',x_2',y_1',y_2')\in D$, the coordinates $x_1$ and $x_1'$ are $\mathcal M$-independent.
    \end{claim}
    \begin{proof}
        We want to show that $\rk(x_1x_1')=\rk(xyx'y')=2$. We do this by showing that $(x,y,x',y')\in\acl_{\mathcal M}(x_1x_1')$. Let $w=x+y=x'+y'\in M^2-Z$. 

        Since $C$ has finite stabilizer, it is non-trivial (i.e. each projection $C\rightarrow M$ is finite-to-one). Indeed, otherwise $C$ would be almost equal to a horizontal or vertical line, which have infinite stabilizers. Thus $x_2\in\acl_{\mathcal M}(x_1)$ and $x_2'\in\acl_{\mathcal M}(x_1')$. In particular, $(x,x')\in\acl_{\mathcal M}(x_1x_1')$. Next, by definition we have $w\in(x+C)\cap(x'+C)$; and also by definition we have $x-x'\notin\operatorname{Stab}(C)$; thus $(x+C)\cap(x'+C)$ is finite, and so $w\in\acl_{\mathcal M}(xx')$. Finally, since $w\notin Z$ and $x+y=x'+y'=w$, we have $(x,y,x',y')\in\acl_{\mathcal M}(w)$. So, putting everything together, we obtain $(x,y,x',y')\in\acl_{\mathcal M}(x_1x_1')$, as desired.
    \end{proof}

    Now by the claim and Lemma \ref{L: closure for groups}, the corresponding coordinates of $(\hat x,\hat y,\hat x,\hat y)$ are $\mathcal M$-independent -- namely $\hat x_1$ and $\hat x_1$. But equal elements are only independent if they are algebraic -- so $\hat x_1\in\acl_{\mathcal M}(\emptyset)$.
    
    Now since $\hat x,\hat y\in C$, and by non-triviality, we have $(\hat x,\hat y)\in\acl_{\mathcal M}(\hat x_1\hat y_1)$. Since $\hat x_1\in\acl_{\mathcal M}(\emptyset)$, this means $$\rk(\hat x\hat y)\leq\rk(\hat x_1)+\rk(\hat y)\leq 0+1,$$ again implying the lemma.
\end{proof}
    Finally, we combine our two bounds to prove Theorem \ref{T: group case}. Namely, by Lemmas \ref{L: lots of ramification in group} and \ref{L: noninjective point is dependent groups}, we have $$2\cdot\dim(M)-d_0\leq\dim(\operatorname{Ram}(f))\leq\dim(M).$$ Thus $2\cdot\dim(M)-d_0\leq\dim(M)$, and rearranging gets $\dim(M)\leq d_0$.
\end{proof}

\section{The General Strategy}\label{S: informal}

We now revisit the proof of Theorem \ref{T: group case} and abstract a general strategy for Theorem \ref{T: intro} that does not use a group operation. This section will be an informal discussion. The actual proof begins in Section \ref{S: general case}.

\subsection{From Addition to the Family of Translates}

Let us temporarily resume the setup in the proof of Theorem \ref{T: group case}: we have a strongly minimal plane curve $C$, and we consider the sum map on $C^2$, or more precisely the fibers of this map -- the sets $\{(x,y)\in C^2:x+y=z\}$ for $z\in M^2$. In the general case, our only tool is a dualizable family of plane curves. Thus, we want to rewrite the proof in terms of such a family. The family we use is $\{C+t:t\in M^2\}$, the translates of $C$. (This is almost faithful since $\operatorname{Stab}(C)$ is finite).

As in the proof of Theorem \ref{T: group case}, the fiber above $z\in M^2$ in $+:C^2\rightarrow M^2$ is essentially $C\cap(z-C)$. If $x\in C\cap(z-C)$, there is some (obviously unique) $y\in C$ with $x=z-y$, and thus $x+y=z$; and if $x+y=z$ with $x,y\in C$, then by definition $x=z-y\in C\cap(z-C)$. 

Thus, we view the fibers of $+:C^2\rightarrow M^2$ as intersections of $C$ with translates of $-C$. To add some symmetry, it is harmless to tanslate $C$ as well. That is, let $\mathcal D=\{D_t:t\in M^2\}$ and $\mathcal E=\{E_t:t\in M^2\}$ where $D_t=C+t$ and $E_t=t-C$. Then the fibers of $+:C^2\rightarrow M^2$ are captured by the family of all intersections $D_s\cap E_t$.

Our goal is to describe the objects in the proof of Theorem \ref{T: group case} using only $\mathcal D$. At this point, it is enough to express $\mathcal E$ in terms of $\mathcal D$. The punch line: $\mathcal D$ and $\mathcal E$ are dual. Thus $\mathcal E=\mathcal D^\vee$, and we can redo the proof in terms of the intersections $D_s\cap D^{\vee}_t$. To see this, let $x,y\in M^2$ be arbitrary. Then: $$x\in D_y\iff x\in C+y\iff x-y\in C\iff y-x\in -C$$ $$\iff y\in x-C\iff y\in E_x.$$ 

\subsection{Extracting the Appropriate Map}

We thus wish to study pairwise intersections of curves in $\mathcal D$ and $\mathcal D^{\vee}$. After a harmless change of variables, let us express this as follows. Let $$I=\{(x,y,z)\in M^6:y\in C+x\wedge z\in C+y\}.$$ Note that $(x,y,z)\in I$ is equivalent to $y\in D_x\cap D^\vee_z$. So the family of intersections of $\mathcal D$ and $\mathcal D^\vee$ is captured by the projection $$\pi:I\rightarrow M^4,\;\;\;\;(x,y,z)\mapsto(x,z).$$ The original ramification point $0+0=0$ translates in this language to $(0,0,0)\in I$ and $\pi(0,0,0)=(0,0)$. The witnessing equations $0+u=u+0$ say that for $u\in C$ close to 0 we have $$0,u\in C\cap(u-C),$$ so that $$(0,0,u),(0,u,u)\in I$$ and $$\pi(0,0,u)=\pi(0,u,u)=(0,u).$$ 

In the general case, we want to use tuples that are as generic as possible (rather than honing in on the identity, which is natural in a group). To help with this, consider the image of the above data under translation by a generic of $M^2$. First, let $x\in M^2$ be generic. Since $0\in C$, we have $x\in(C+x)\cap(x-C)$, so that $(x,x,x)\in I$ and $\pi(x,x,x)=(x,x)$. 

In fact, $(x,x,x)$ is still a ramification point of $\pi$. To see this, let $u\in C$ be close to 0. Since $0,u\in C$, one gets $$x,x+u\in(C+x)\cap((x+u)-C).$$ Now let $z=x+u$, so $z$ is close to $x$. Then the above translates to $$(x,x,z),(x,z,z)\in I$$ and $$\pi(x,x,z)=\pi(x,z,z)=(x,z),$$ witnessing ramification.

\subsection{The Abstract Setup}

Now suppose $\mathcal M$ is not necessarily a group. The following has been motivated:

Let $\mathcal D=\{D_t:t\in T\}$ be a dualizable family of plane curves. Analogously define $$I=\{(x,y,z)\in M^6:y\in D_x\wedge z\in D_y\}$$ and $$\pi:I\rightarrow M^4,\;\;\;\;(x,y,z)\mapsto(x,z).$$ Then for generic $x\in M^2$, we hope to find a ramification point of $\pi$ of the form $\pi(x,x,x)=(x,x)$, witnessed by points of the form $$\pi(x,x,z)=\pi(x,z,z)=(x,z)$$ for $z$ near $x$.

To guarantee $(x,x,x)\in I$, we need to choose $\mathcal D$ so that $x\in D_x$ for generic $x$. To be able to apply purity of ramification (Definition \ref{D: ramification purity}), we even need that $(x,x,x)\in I^{lman}$ and $\pi$ is finite-to-one near $(x,x,x)$. The main portion of the proof of Theorem \ref{T: intro} amounts to finding $\mathcal D$ with these properties.

\subsection{What We Need for the Construction to Work}

 In the end, the condition $(x,x,x)\in I^{lman}$ seems too strong. This has to do with the failure of definable (finite) choice in $\mathcal M$: the natural way to satisfy $x\in D_x$ is to define each $D_x$ by `choosing' a curve through $x$ from a bigger family. Without definable choice, one has to choose several such curves and take their union -- and this introduces branch points, for both $\mathcal D$ and $I$.
 
 Instead, we require that the identity $x\in D_x$ be witnessed by a uniformly $\mathcal R$-definable collection of `loric manifold branches' of the family $\mathcal D$. We then use such a family to construct a loric manifold branch of $I$ near $(x,x,x)$, and apply purity of ramification to this branch. This is similar to the group case above, where we sacrificed $\mathcal M$-definability for access to ramification purity.

 Arranging that $\pi$ is locally finite-to-one is more complicated. In the group case, we did this by exploiting the uniformity of the group structure (essentially, the infinite fibers of $\pi$ were all controlled by the finite set $Z$). Without a group, it is difficult to `map out' all infinite fibers of $\pi$. 
 
 In the end, our approach is to make \textit{all} fibers finite, so there is nothing to map out. This makes the construction of $\mathcal D$ much harder, but the eventual proof easier. One can think of this requirement as saying that $\mathcal D$ and $\mathcal D^\vee$ are genuinely different families of curves -- i.e. they have no members in common, so they are not just permutations of each other.

 \subsection{The Strong Identity Property and Self-Duality}

The following are now motivated.

 \begin{definition}\label{D: identity property} Let $\mathcal D=\{D_t:t\in T\}$ be a dualizable family of plane curves. We say that $\mathcal D$ has the \textit{strong identity property} if for generic $(a,b)\in M^2$, there are $\mathcal R$-definable relative neighborhoods $A\subset M$ of $a$ and $B\subset M$ of $b$ in $M$, and a loric map $h:A\times B\times A\rightarrow M$, such that:
		\begin{enumerate}
			\item $A$ and $B$ are loric $\dim(M)$-manifolds.
			\item If $(a',b',a'')\in A\times B\times A$ and $b''=h(a',b',a'')$ then $(a'',b'')\in D_{(a',b')}$.
			\item If $(a',b')\in A\times B$ then $h(a',b',a')=b'$.
		\end{enumerate}
 \end{definition}

 \begin{definition}\label{D: self dual} Let $\mathcal D=\{D_t:t\in T\}$ be a dualizable family of plane curves. We say that $\mathcal D$ is \textit{self-dual} if for generic $x\in M^2$ there is some $z\in M^2$ such that $D_x\cap D^\vee_z$ is infinite.
\end{definition}

\begin{remark} Since dualizable families (and their duals) are almost faithful, it is easy to see that the $z$ witnessing self duality at $x$ is $\mathcal M$-interalgebraic with $x$, so also generic. Thus $\mathcal D$ is not self-dual if and only if there are no $x$ and $z$, both generic in $M^2$, so that $D_x\cap D^\vee_z$ is infinite.
\end{remark}

Our informal discussion suggests that we can prove the following; the formal proof will occupy the next section:

\begin{theorem}\label{T: general case from family} Assume there is a dualizable family of plane curves in $\mathcal M$ which has the strong identity property and is not self-dual. Then $\dim M\leq d_0$.
\end{theorem}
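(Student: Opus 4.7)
Set $I=\{(x,y,z)\in M^6:y\in D_x\wedge z\in D_y\}$ and let $\pi:I\to M^4$ send $(x,y,z)\mapsto(x,z)$. The plan is to build an open loric manifold $I'\subset I$ around an ``identity'' point, on which $\pi|_{I'}$ is a finite-to-one loric map between loric $(4\dim M)$-manifolds with a ramification point, and then play manifold ramification purity (Definition~\ref{D: ramification purity}) against weak detection of closures (Fact~\ref{F: closure}).

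I first fix a generic $(a,b)\in M^2$ and take $A,B,h$ as in Definition~\ref{D: identity property}. Iterating $h$, I parameterize a candidate branch by $(a',b',a'',a''')\in A\times B\times A\times A$ via $b'':=h(a',b',a'')$ and $b''':=h(a'',b'',a''')$, letting $I'\subset M^6$ be the image of the map $(a',b',a'',a''')\mapsto((a',b'),(a'',b''),(a''',b'''))$. Loricness of $h$ combined with Lemmas~\ref{L: strong basic} and~\ref{L: manifold preservation} will show $I'$ is a loric $(4\dim M)$-manifold; the strong identity property gives $I'\subset I$, and $h(a,b,a)=b$ gives $((a,b),(a,b),(a,b))\in I'$. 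Iterating $h$ at varying base points may force shrinking $A\times B$ to a neighborhood on which Definition~\ref{D: identity property} uniformizes. For finite-to-one-ness of $\pi|_{I'}$: an infinite fiber over $(x,z)$ forces $D_x\cap D^\vee_z$ to be infinite, which at generic $(x,z)$ is ruled out by not self-dual (Definition~\ref{D: self dual}), and the non-generic exceptions can be shrunk away.

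The identity $h(a',b',a')=b'$ will provide two one-parameter degenerations of the parameterization landing in the same $\pi$-fiber. Taking $a''=a'$ yields $((a',b'),(a',b'),(\alpha,h(a',b',\alpha)))$; taking $a'''=a''=:\alpha$ yields $((a',b'),(\alpha,h(a',b',\alpha)),(\alpha,h(a',b',\alpha)))$. Both lie in $I'$, share the $\pi$-image $((a',b'),(\alpha,h(a',b',\alpha)))$, and are distinct whenever $a'\ne\alpha$. As $(a',b',\alpha)\to(a,b,a)$ this witnesses ramification of $\pi|_{I'}$ at $((a,b),(a,b),(a,b))$, and Definition~\ref{D: ramification purity} then gives $\dim\operatorname{Ram}(\pi|_{I'})\geq 4\dim M-d_0$.

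For the matching upper bound, I introduce the $\mathcal M(\emptyset)$-definable set
\[
E=\{(x,y_1,y_2,z)\in M^8:y_1,y_2\in D_x,\ y_1\ne y_2,\ z\in D_{y_1}\cap D_{y_2}\}.
\]
Almost faithfulness of $\mathcal D$ and $\mathcal D^\vee$ (Fact~\ref{F: good families}) makes $D_{y_1}\cap D_{y_2}$ and $D^\vee_{y_1}\cap D^\vee_{y_2}$ generically finite, yielding $\rk_{\mathcal M} E\leq 4$ together with $\mathcal M$-independence of the tuples $y_1,y_2\in M^2$ at generic points of $E$. Given a generic ramification point $(\hat x,\hat y,\hat z)\in\operatorname{Ram}(\pi|_{I'})$, distinct approximating preimages of a common $\pi$-image produce a sequence in $E$ converging to $(\hat x,\hat y,\hat y,\hat z)$, placing this tuple in $\overline E$. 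Weak detection of closures (Fact~\ref{F: closure}) then bounds $\rk_{\mathcal M}(\hat x,\hat y,\hat z)\leq 4$; in case of equality the independence clause applied to the two $\hat y$-coordinates would force $\hat y\in\acl_{\mathcal M}(\emptyset)$ and drop the rank to at most $\rk(\hat x)+\rk(\hat z\mid\hat y)\leq 2+1=3$, a contradiction. Hence $\dim\operatorname{Ram}(\pi|_{I'})\leq 3\dim M$, and combining with the lower bound yields $\dim M\leq d_0$. The main obstacles I anticipate are in the construction above: uniformizing the strong identity property over $A\times B$ so that $h$ can be iterated there, and securing the coordinate-wise genericity hypothesis of Fact~\ref{F: closure} for ramification points (done in the group case by the translation trick of Lemma~\ref{L: closure for groups}) without access to a group operation.
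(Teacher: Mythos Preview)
Your plan matches the paper's Section~6 proof almost exactly: same $I$ and $\pi$, same loric branch $I'$ built by iterating $h$ on $A\times B\times A\times A$, same two-parameter degeneration witnessing ramification at the diagonal, and the same closure-detection set (your $E$ is the paper's $Q$). The obstacles you flag are genuine; here is how the paper resolves them, together with one gap you did not flag.

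For coordinate-wise genericity, the paper (Lemma~\ref{L: lots of ramification}) uses a compactness argument rather than a translation: for each $\mathcal R(\emptyset)$-definable $S\ni\hat x$, intersect $I'$ with $S^3$ (still a loric $(4\dim M)$-manifold), apply ramification purity there, and take a generic ramification point. Running over all such $S$ yields $(u,v,w)\in\operatorname{Ram}$ with each of $u,v,w$ realizing $\operatorname{tp}_{\mathcal R}(\hat x/\emptyset)$, so all six $M$-coordinates are generic in $M$.

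The real gap is in your use of the independence clause of Fact~\ref{F: closure}. First, that clause concerns \emph{single} $M$-coordinates $a_i,a_j$, not $M^2$-tuples; it cannot directly give $\hat y\in\acl_{\mathcal M}(\emptyset)$, only one coordinate $\hat y_1\in\acl_{\mathcal M}(\emptyset)$, which does not yield your computation $\rk(\hat x)+\rk(\hat z\mid\hat y)\leq 3$. (It \emph{does} immediately contradict the coordinate-wise genericity of $v$ obtained above, which is how the paper actually closes the argument.) Second, and more seriously, your claim that $y_1,y_2\in M^2$ are $\mathcal M$-independent at generic points of $E$ is not justified: almost faithfulness only says the locus of $(y_1,y_2)$ with $D_{y_1}\cap D_{y_2}$ (or $D^\vee_{y_1}\cap D^\vee_{y_2}$) infinite has rank $\leq 2$, but over that locus the $(x,z)$-fiber can have rank $2$, so this bad part of $E$ may itself have rank $4$ and contribute generic points with $y_1,y_2$ dependent. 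The paper repairs this by first passing to $P\subset Q$ (removing fibers over the rank-$\leq 1$ set $Z$ of infinite $\pi$-fibers, and checking via Fact~\ref{F: closure} that $(u,v,v,w)\in\overline P$ still), then splitting $P=P_1\cup P_2$ according to which \emph{single} $M$-coordinate of $y,y'$ differs, and finally deleting from $P_1$ the \emph{extendable} pairs $(y_1,y_1')$---those admitting a rank-$\geq 3$ extension in $P_1$. The set of extendable pairs has rank $\leq 1$, so by the frontier inequality $(u,v,v,w)$ remains in the closure of the trimmed set $P'$; and on $P'$ the single-coordinate independence of $y_1,y_1'$ holds by construction, making Fact~\ref{F: closure} applicable.
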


In Section \ref{S: general case} we prove Theorem \ref{T: general case from family}. In Section \ref{construction section} we give a construction that either produces a group configuration, or a dualizable family that has the strong identity property and is not self-dual. Thus, we conclude Theorem \ref{T: intro} by either Theorem \ref{T: group case} or Theorem \ref{T: general case from family}.

\section{Proof of the General Case From an Appropriate Family}\label{S: general case}

We now prove Theorem \ref{T: general case from family} using the method described in the previous section.

\begin{proof}[Proof of Theorem \ref{T: general case from family}] Let $\mathcal D=\{D_t:t\in T\}$ be a dualizable family of plane curves in $\mathcal M$ which has the strong identity property and is not self-dual. Without loss of generality, assume $\mathcal D$ is $\mathcal M(\emptyset)$-definable.

\begin{notation} As discussed above, we fix the following throughout the proof.
\begin{itemize}
    \item Let $I$ be the set of $(x,y,z)\in M^6$ such that $y\in D_x$ and $z\in D_y$.
    \item Let $\pi:I\rightarrow M^4$ be the projection given by $(x,y,z)\mapsto(x,z)$.
    \item Let $Z$ be the set of $(x,z)$ with infinite fiber under $\pi$. 
    \item Let $\hat x=(\hat a,\hat b)$ be a fixed generic in $M^2$. 
\end{itemize}
\end{notation}

Automatically, we have:
\begin{itemize}
    \item $I$, $\pi$, and $Z$ are $\mathcal M(\emptyset)$-definable.
    \item The fibers of $\pi$ are the intersections $D_x\cap D^{\vee}_z$.
    \item $(\hat x,\hat x,\hat x)\in I$ (by the strong identity property).
\end{itemize}

We give some more basic properties:

\begin{lemma}\label{L: rk I} $\rk(I)=4$.
\end{lemma}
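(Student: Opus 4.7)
The plan is to compute $\rk I$ by factoring through the graph of $\mathcal D$. Set $J = \{(x,y) \in T \times M^2 : y \in D_x\}$, the graph of $\mathcal D$ with the coordinates rearranged, and consider the natural projection $\pi_{12} \colon I \to J$, $(x,y,z) \mapsto (x,y)$. Since $\mathcal D$ is dualizable, $T$ is large in $M^2$, so $\rk T = 2$; together with the fact that each $D_x$ is a plane curve of rank $1$, this yields $\rk J = 3$ by standard dimension counting (Fact \ref{F: dim rk}).

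The upper bound $\rk I \le 4$ is then immediate: the fiber of $\pi_{12}$ above a point $(x,y) \in J$ is either empty (when $y \notin T$) or equal to $D_y$, hence of rank at most $1$, so $\rk I \le \rk J + 1 = 4$.

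For the matching lower bound, I would produce a point $(x,y,z) \in I$ of rank $4$ via the three-step recipe: pick $x$ generic in $T$ (rank $2$); then $y$ generic in $D_x$ over $x$ (adding rank $1$); then $z$ generic in $D_y$ over $(x,y)$ (adding rank $1$). The main obstacle, and the heart of the argument, is justifying the third step --- namely, that $y \in T$, so that $D_y$ is actually defined as a plane curve. This is exactly where duality enters: the projection of $J$ onto the $y$-coordinate is precisely the index set $T^\vee$ of the dual family $\mathcal D^\vee$, and by Fact \ref{F: good families}(2), $\mathcal D^\vee$ is itself dualizable, so $T^\vee$ is large in $M^2$. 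Therefore $y$, being generic in the image of this projection, is generic in all of $M^2$ and in particular lies in the large set $T$. The rest is routine: $D_y$ is a plane curve of rank $1$, so a generic $z \in D_y$ over $(x,y)$ contributes an additional rank $1$, giving $\rk(x,y,z) = 4$ and hence $\rk I \ge 4$.
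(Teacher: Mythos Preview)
Your proof is correct, though it takes a different route from the paper's. You project $I$ onto the first two coordinates $(x,y)$, landing in the graph $J$ of $\mathcal D$, and analyze fibers of the form $D_y$. The paper instead projects onto the \emph{middle} coordinate $y$ alone: the fiber of $I$ over $y$ is then the product $D^\vee_y\times D_y$, which by dualizability has rank $\le 2$ everywhere and rank exactly $2$ for generic $y$, yielding $\rk(I)=\rk(M^2)+2=4$ in a single stroke. The paper's argument is slightly more symmetric and gets both bounds simultaneously; yours is more sequential and requires building a rank-$4$ point by hand, but both are perfectly valid.

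One small imprecision worth flagging: the phrase ``$y$, being generic in the image of this projection'' is asserted rather than justified, and generics of a definable set need not project to generics of the image in general. What makes it work here is that the fibers of $J\to T^\vee$ are the dual curves $D^\vee_y$, which by dualizability have rank $\le 1$; hence $\rk(y)\ge\rk(x,y)-\rk(x/y)\ge 3-1=2$, so $y$ is indeed generic in $M^2$ and thus lies in the large set $T$. With that one-line addition the argument is complete.
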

\begin{proof}
    For $y\in M^2$, the set of $(x,z)$ with $(x,y,z)\in I$ is just $D^\vee_y\times D_y$. By dualizability, the sets $D_w$ and $D^{\vee}_w$ (for $w\in M^2$) are all of rank $\leq 1$, and are generically of rank 1. Thus the fibers $D^\vee_y\times D_y$ are all of rank $\leq 2$, and are generically of rank 2. So $\rk(I)=\rk(M^2)+2=4$. 
\end{proof}

\begin{lemma}\label{L: Z small} $\rk(Z)\leq 1$.
    \end{lemma}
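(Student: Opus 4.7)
The plan is to deduce $\rk(Z) \leq 1$ by first showing both coordinate projections $p_1, p_2 : Z \to M^2$, $(x,z) \mapsto x$ and $(x,z) \mapsto z$, are finite-to-one. Once that is established, the non-self-duality assumption combined with $\aleph_1$-saturation forces $\rk(Z) \leq 1$ essentially by symmetry.

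For finiteness of $p_1$-fibers, fix $x_0 \in M^2$; we may assume $x_0 \in T$, else the fiber is empty. Then $D_{x_0}$ has rank $1$, and in the strongly minimal theory $\mathcal{M}$ it decomposes into finitely many strongly minimal components $S_1, \ldots, S_k$ together with a finite residue. If $(x_0, z) \in Z$ then $D_{x_0} \cap D_z^\vee$ is infinite, so by strong minimality some $S_i$ must be almost contained in $D_z^\vee$ and hence is (almost equal to) a strongly minimal component of $D_z^\vee$. For each fixed $i$, the set of such $z$ is finite: pick any $z_0$ in this set; for any further $z$ in the set, $D_{z_0}^\vee \cap D_z^\vee$ contains $S_i$ up to a finite set and is therefore infinite, so almost faithfulness of $\mathcal{D}^\vee$ (Fact \ref{F: good families}(2)) leaves only finitely many possibilities for $z$ given $z_0$. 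Summing over $i \leq k$ yields a finite $p_1$-fiber. The argument for $p_2$ is symmetric, swapping the roles of $\mathcal{D}$ and $\mathcal{D}^\vee$ and invoking almost faithfulness of $\mathcal{D}$.

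Now suppose for contradiction that $\rk(Z) \geq 2$. Since $p_1$ is finite-to-one, $\rk(p_1(Z)) = \rk(Z) \geq 2$, and by Fact \ref{F: dim rk} this forces $\dim(p_1(Z)) = \dim(M^2)$, so $p_1(Z)$ is large in $M^2$. By $\aleph_1$-saturation, choose $x_0 \in p_1(Z)$ with $\dim(x_0) = \dim(M^2)$, i.e., generic in $M^2$; then pick any $z_0$ with $(x_0, z_0) \in Z$. Finiteness of $p_1^{-1}(x_0)$ forces $z_0 \in \acl(x_0)$, and symmetrically the $p_2$-fiber finiteness gives $x_0 \in \acl(z_0)$. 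Consequently $\dim(z_0) = \dim(x_0) = \dim(M^2)$, so $z_0$ is also generic in $M^2$. But then $(x_0, z_0)$ witnesses $D_{x_0} \cap D_{z_0}^\vee$ being infinite with both coordinates generic in $M^2$, contradicting the characterization of non-self-duality in the remark after Definition \ref{D: self dual}.

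The substantive step is the finite-fiber claim, which is where almost faithfulness of both $\mathcal{D}$ and $\mathcal{D}^\vee$ (together with the component-decomposition of rank-$1$ sets) does all the real work. Once that is in hand, the reduction to a contradiction with non-self-duality via saturation and symmetry is essentially formal.
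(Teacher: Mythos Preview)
Your proof is correct and follows essentially the same route as the paper: both arguments use almost faithfulness of $\mathcal D$ and $\mathcal D^\vee$ to show that $x$ and $z$ are interalgebraic whenever $(x,z)\in Z$, then assume $\rk(Z)\geq 2$ and conclude that both coordinates are generic in $M^2$, contradicting non-self-duality. The paper compresses your finite-fiber argument into the single sentence ``since $\mathcal D$ and $\mathcal D^\vee$ are almost faithful, $x$ and $z$ are $\mathcal M$-interalgebraic'' and works throughout with $\rk$ rather than detouring through $\dim$, but the content is the same; if anything, your version is more explicit about why $\mathcal M$-genericity of $x$ suffices (you produce an $\mathcal R$-generic $x_0$ directly), whereas the paper silently uses that the self-duality condition is $\mathcal M$-definable.
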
 
    \begin{proof} If not, there is $(x,z)\in M^4$ with $D_x\cap D^\vee_z$ infinite and $\rk(xz)\geq 2$. Since $\mathcal D$ and $\mathcal D^{\vee}$ are almost faithful, $x$ and $z$ are $\mathcal M$-interalgebraic. So $\rk(x)=\rk(z)\geq 2$, implying that $x$ and $z$ are both $\mathcal M$-generic in $M^2$. This shows that $\mathcal D$ is self-dual, a contradiction.
    \end{proof}

\begin{lemma}\label{L: projection finite to one} $\pi$ is finite-to-one in a neighborhood of $(\hat x,\hat x,\hat x)$.
\end{lemma}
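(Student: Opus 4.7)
The plan is to show $(\hat x,\hat x,\hat x)\notin\overline{\pi^{-1}(Z)}$. This suffices, because taking a neighborhood $U$ of $(\hat x,\hat x,\hat x)$ in $I$ disjoint from $\pi^{-1}(Z)$, every $(x,z)\in\pi(U)$ lies outside $Z$, so every $\pi$-fiber meeting $U$ is finite and $\pi\restriction_U$ is finite-to-one.

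First I would observe $(\hat x,\hat x,\hat x)\notin\pi^{-1}(Z)$, equivalently $(\hat x,\hat x)\notin Z$: since $Z$ is $\mathcal M(\emptyset)$-definable with $\rk(Z)\leq 1$ by Lemma \ref{L: Z small}, while $\rk(\hat x,\hat x/\emptyset)=\rk(\hat x/\emptyset)=2$, this is immediate. Next, set $F:=\pi^{-1}(Z)$, an $\mathcal M(\emptyset)$-definable set, and bound its rank. The fiber of $\pi\restriction_F$ over $(x,z)\in Z$ is $\{(x,y,z):y\in D_x\cap D^\vee_z\}$, an infinite subset of the rank-$1$ curve $D_x$, so has rank exactly $1$. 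Combined with $\rk(Z)\leq 1$ this gives $\rk(F)\leq 2$, hence $\dim(F)\leq 2\dim(M)$ by Fact \ref{F: dim rk}.

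Now suppose for contradiction that $(\hat x,\hat x,\hat x)\in\overline{F}$. Since $(\hat x,\hat x,\hat x)\notin F$, it belongs to $\operatorname{Fr}(F)\subset\overline{\operatorname{Fr}(F)}$, so the strong frontier inequality (Definition \ref{D: HGS}(3), with $A=\emptyset$) yields
$$\dim(\hat x,\hat x,\hat x/\emptyset)<\dim(F).$$
But the left side equals $\dim(\hat x/\emptyset)=2\dim(M)$, contradicting $\dim(F)\leq 2\dim(M)$.

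The only delicate point is the rank bound on $F$: the strict inequality in the frontier axiom barely closes the gap, so I genuinely need $\rk(F)\leq 2$ rather than merely $\leq 3$. This is exactly where dualizability of $\mathcal D$ enters — it forces each $D_x$ (and each $D^\vee_z$) to have rank at most $1$, limiting the contribution of the $\pi$-fiber over $Z$ to rank $1$. Everything else is a routine application of rank arithmetic and the frontier axiom.
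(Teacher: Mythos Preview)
Your proof is correct, but it takes a different route from the paper's. The paper argues directly in $M^4$: if $\pi$ is not finite-to-one near $(\hat x,\hat x,\hat x)$, then by continuity of $\pi$ one gets $(\hat x,\hat x)\in\overline{Z}$; since each $M$-coordinate of $(\hat x,\hat x)$ is generic, Fact~\ref{F: closure} (weak detection of closures) applies and yields $\rk(\hat x)\leq\rk(Z)\leq 1$, contradicting genericity. You instead work in $M^6$, lifting $Z$ to $F=\pi^{-1}(Z)$, bounding $\rk(F)\leq 2$ via the fiberwise rank-$1$ contribution, and then invoking the Strong Frontier Inequality (Definition~\ref{D: HGS}(3)) to derive the contradiction $2\dim(M)<\dim(F)\leq 2\dim(M)$. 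Your approach is more elementary in that it uses only an axiom of Hausdorff geometric structures rather than the derived theorem on weak detection of closures; the price is the extra (easy) step of computing $\rk(F)$. The paper's approach is slightly shorter and illustrates the utility of Fact~\ref{F: closure}, which is in any case needed elsewhere in the section.
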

\begin{proof}
   If not, then $(\hat x,\hat x)\in\overline Z$. But each coordinate of $(\hat x,\hat x)$ is generic in $\mathcal M$ by construction, so Fact \ref{F: closure} applies. Combined with Lemma \ref{L: Z small}, we get $\rk(\hat x)\leq\rk(Z)\leq 1$, contradicting the choice of $\hat x$.
\end{proof}

 We now use the strong identity property to extract a loric map of loric manifolds (the analog of `$f$' from the group case).
 
	\begin{lemma}\label{L: submanifold of I} There is a loric $(4\cdot\dim(M))$-manifold $X\subset I$ such that $(\hat x,\hat x,\hat x)$ is a ramification point of the restriction of $\pi$ to $X$.
	\end{lemma}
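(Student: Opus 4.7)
The plan is to construct $X$ as a parametrized image inside $I$, using the loric map $h$ provided by the strong identity property. Concretely, by that property applied to the generic $\hat x = (\hat a, \hat b)$, we obtain relatively open loric $\dim(M)$-manifold neighborhoods $A \ni \hat a$ and $B \ni \hat b$, together with a loric map $h : A \times B \times A \to M$ with the two properties listed in Definition \ref{D: identity property}. By continuity of $h$ and the fact that $h(\hat a, \hat b, \hat a) = \hat b \in B$, I would shrink $A$ and $B$ first (taking intersections with suitable open subsets, which preserves their status as loric $\dim(M)$-manifolds by Lemma \ref{L: manifold preservation}(3)) to ensure $h(A \times B \times A) \subset B$. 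This is the one mild technicality: one needs that such shrinking keeps $\hat a \in A$, $\hat b \in B$, and is itself loric; this is fine since open subsets of loric sets are loric.

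Next, I would define a loric map $\phi : A^3 \times B \to M^6$ by
\[
\phi(a_1, a_2, a_3, b_1) = \bigl( (a_1, b_1),\ (a_2, h(a_1, b_1, a_2)),\ (a_3, h(a_2, h(a_1, b_1, a_2), a_3)) \bigr),
\]
and take $X := \phi(A^3 \times B)$. The map $\phi$ is loric by the composition and coordinate-by-coordinate properties (Lemma \ref{L: strong basic}), where the inner applications of $h$ are well-defined thanks to the shrinking $h(A \times B \times A) \subset B$. By the strong identity property (Definition \ref{D: identity property}(2)) applied twice, $X \subset I$. Moreover, $\phi$ is a homeomorphism onto its image, with inverse given by extracting the four ``free'' coordinates $(a_1, a_2, a_3, b_1)$ from a point of $X$; this inverse is loric as a composition of coordinate projections. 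Thus $X$ is lorically homeomorphic to the product $A^3 \times B$ of loric $\dim(M)$-manifolds, so by Lemma \ref{L: manifold preservation}(1)-(2) it is a loric $(4\cdot\dim(M))$-manifold.

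Finally, I would verify ramification at $(\hat x, \hat x, \hat x)$. First, $\phi(\hat a, \hat a, \hat a, \hat b) = (\hat x, \hat x, \hat x)$ by Definition \ref{D: identity property}(3), so this point lies in $X$. For an arbitrary neighborhood of $(\hat x, \hat x, \hat x)$ in $X$, its preimage under $\phi$ contains some neighborhood of $(\hat a, \hat a, \hat a, \hat b)$ in $A^3 \times B$. Since $\dim(M) \geq 1$, I can pick $a_3 \in A$ arbitrarily close to $\hat a$ with $a_3 \neq \hat a$, and then consider the two parameter tuples $(\hat a, \hat a, a_3, \hat b)$ and $(\hat a, a_3, a_3, \hat b)$. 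Using $h(\hat a, \hat b, \hat a) = \hat b$ in the first case and $h(a_3, b', a_3) = b'$ in the second, one computes
\[
\phi(\hat a, \hat a, a_3, \hat b) = (\hat x,\ \hat x,\ z) \quad\text{and}\quad \phi(\hat a, a_3, a_3, \hat b) = (\hat x,\ z,\ z),
\]
where $z := (a_3, h(\hat a, \hat b, a_3))$. These are distinct (their middle coordinates differ, as $z \neq \hat x$ from the first coordinate) and both map under $\pi$ to $(\hat x, z)$, witnessing ramification.

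I expect the main obstacle to be simply bookkeeping: making sure the shrinking of $A$ and $B$ is done cleanly, that the composed expression inside $\phi$ is genuinely a loric map (rather than tacitly requiring closure properties that $\mathcal S$ may not have), and that the parametrization $\phi$ gives a loric \emph{homeomorphism} onto $X$ as opposed to just a loric bijection. Conceptually the argument is the direct translation of the group-case trick ``$0 + u = u + 0$'' to the language of the family $\mathcal D$: the two witnesses to ramification correspond to the two ways of interpolating the equality $\hat x \in D_{\hat x}$, $z \in D_{\hat x}$, $z \in D_z$ into a triple in $I$.
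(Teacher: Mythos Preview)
Your approach is essentially identical to the paper's: build a loric embedding from a $4\dim(M)$-dimensional parameter space into $I$ by iterating $h$, then witness ramification via the pair of parameter tuples $(\hat a,\hat a,a_3,\hat b)$ and $(\hat a,a_3,a_3,\hat b)$. The paper does exactly this (with the coordinates ordered as $(a,b,a',a'')$ rather than $(a_1,a_2,a_3,b_1)$).

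There is one genuine slip, though, precisely at the point you flagged as ``the one mild technicality.'' You claim that by continuity one can shrink $A$ and $B$ so that $h(A\times B\times A)\subset B$ with the \emph{same} $B$ on both sides. This fixed-point condition is not generally achievable: take $M=R$, $\hat a=\hat b=0$, and $h(a_1,b_1,a_2)=b_1+a_2-a_1$ (which satisfies both clauses of Definition~\ref{D: identity property}); then for intervals $A=(-\epsilon,\epsilon)$, $B=(-\delta,\delta)$ the image $h(A\times B\times A)$ has radius $\delta+2\epsilon>\delta$, so it never sits inside $B$. What continuity \emph{does} give is smaller $A'\subset A$, $B'\subset B$ with $h(A'\times B'\times A')\subset B$ (the original $B$), and that is all you need: keep $h$ defined on the original $A\times B\times A$, take the domain of $\phi$ to be $A'\times B'\times A'\times A'$ (or even $A'\times B'\times A'\times A$ as the paper does), and the nested evaluation $h(a_2,h(a_1,b_1,a_2),a_3)$ is well-defined because the inner value lands in $B$. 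With this adjustment your argument goes through verbatim.
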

	\begin{proof} Let $A,B,h$ be provided from Definition \ref{D: identity property}, for the family $\mathcal D$ and the point $\hat x=(\hat a,\hat b)$. By the continuity of $h$, there are neighborhoods $A'$ and $B'$ of $\hat a$ and $\hat b$ in $M$ so that $h(A'\times B'\times A')\subset B$. Without loss of generality assume $A'$ and $B'$ are loric $\dim(M)$-manifolds (Lemma \ref{L: manifold preservation}(3)). By the restriction property (Lemma \ref{L: strong basic}(\ref{L: strong rest})), $h$ is loric on $A'\times B'\times A'$.
 
    We define a loric embedding $$g:A'\times B'\times A'\times A\rightarrow M^6$$ which takes images in $I$, and let $X$ be the image of $g$. Given $$(a,b,a',a'')\in A'\times B'\times A'\times A,$$ let $b'=h(a,b,a')\in B$, and let $b''=h(a',b',a'')$. Then set $$g(a,b,a',a'')=(a,b,a',b',a'',b'').$$ 
    
    We now check that $X:=\operatorname{Im}(g)\subset M^6$ satisfies the requirements of the lemma. 
    
    \begin{claim} $X$ is a loric $(4\cdot\dim(M))$-manifold and $X\subset I$.   \end{claim} 
    \begin{proof} First, $g$ is loric. Indeed, $g$ is clearly continuous, and its graph is generated by the graph of $h$ by a sequence of products and intersections (so the graph of $g$ is also loric).
    
    Now $g$ has a continuous inverse $X\rightarrow A'\times B'\times A'\times A$, given by a coordinate projection. In particular, $g$ is a homeomorphism, and thus a loric homeomorphism. So $X$ is lorically homeomorphic to $A'\times B'\times A'\times A$, and thus $X$ is a loric $(4\cdot\dim(M))$-manifold (by the preservation of loric manifolds under products and loric homeomorphisms (Lemma \ref{L: manifold preservation}(1)-(2))).
    
    Now let $(a,b,a',b',a'',b'')=g(a,b,a',a'')\in X$. So $b'=h(a,b,a')$ and $b''=h(a',b',a'')$. Then by the choice of $h$ (Definition \ref{D: identity property}), this gives $(a',b')\in D_{(a,b)}$ and $(a'',b'')\in D_{(a',b')}$. In other words, $(a,b,a',b',a'',b'')\in I$.
    \end{proof}

    \begin{claim} $(\hat x,\hat x,\hat x)$ is a ramification point of the restriction of $\pi$ to $X$.
    \end{claim}
    \begin{proof} By the choice of $g$, we have $g(\hat a,\hat b,\hat a,\hat a)=(\hat a,\hat b,\hat a,\hat b,\hat a,\hat b)$, and thus $(\hat x,\hat x,\hat x)\in X$. Now let $V$ be any neighborhood of $(\hat x,\hat x,\hat x)$ in $X$. Since $g$ is a homeomorphism, we have $V=g(U)$ for some neighborhood $U$ of $(\hat a,\hat b,\hat a,\hat a)$ in $A'\times B'\times A'\times A$. We many assume $U=Y\times Z\times Y\times Y$ where $Y$ and $Z$ are loric $\dim(M)$-manifolds. So $\dim(Y)=\dim(M)>0$, and thus there is $a\in Y$ with $a\neq\hat a$. Let $b=h(\hat a,\hat b,a)$, and let $z=(a,b)$. Then:
    
    \begin{enumerate}
        \item $(\hat a,\hat b,\hat a,a)\in Y\times Z\times Y\times Y$.
        \item $g(\hat a,\hat b,\hat a,a)=(\hat x,\hat x,z)$.
        \item $(\hat a,\hat b,a,a)\in Y\times Z\times Y\times Y$.
        \item $g(\hat a,\hat b,a,a)=(\hat x,z,z)$.
        \item $(\hat x,\hat x,z)\neq\hat(\hat x,z,z)$ (since $\hat a\neq a$).
    \end{enumerate}

    By (1) and (2), $(\hat x,\hat x,z)\in V$. By (3) and (4), $(\hat x,z,z)\in V$. But $\pi(\hat x,\hat x,z)=\pi(\hat x,z,z)=(\hat x,z)$, and by (5), this witnesses ramification.
 \end{proof}
 We have now proved Lemma \ref{L: submanifold of I}.
 \end{proof}

 Note that $(\hat x,\hat x)\in(M^4)^{lman}$ (since $\hat x\in M^2$ is generic and loric manifolds are closed under products). Thus there is a loric $(4\cdot\dim(M))$-manifold $Y\subset M^4$, open in $M^4$, with $(\hat x,\hat x)\in Y$. By the locality of loric manifolds, we may assume $f(X)\subset Y$.

 \begin{notation}
     \textbf{For the rest of the proof of Theorem \ref{T: general case from family}, let $X$ be as in Lemma \ref{L: submanifold of I}, and let $Y\subset M^4$ be a $(4\cdot\dim(M))$-manifold which is open in $M^4$ and satisfies $f(X)\subset Y$.}
 \end{notation}

     Since $X$ and $Y$ are loric (and $f$ is just the projection), it follows that $f$ is loric (Lemma \ref{L: strong basic}(\ref{L: strong projection property})). As in the proof of Theorem \ref{T: group case}, we now develop upper and lower bounds for the ramification locus of $f$. Without a group, Fact \ref{F: closure} only works on coordinate-wise generic points, so we cannot explicitly bound $\dim(\operatorname{Ram}(f))$. Instead, we have to work at the level of types. First, we apply purity of ramification to get a lower bound:
    
    \begin{lemma}\label{L: lots of ramification} There is an element $(u,v,w)\in I$ such that the following hold:
    \begin{enumerate}
        \item Each of $u$, $v$, and $w$ realizes $\operatorname{tp}_{\mathcal R}(\hat x/\emptyset)$, and is thus generic in $M^2$.
        \item $(u,v,w)\in\operatorname{Ram}(f)$.
        \item $\dim(uvw)\geq 4\cdot\dim(M)-d_0$.
    \end{enumerate}
    \end{lemma}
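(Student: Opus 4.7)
The plan is to apply manifold ramification purity to $f$ after restricting to a neighborhood of $(\hat x, \hat x, \hat x)$ where $\pi$ is finite-to-one, obtain a dimension lower bound on $\operatorname{Ram}(f)$, show that this ramification locus contains a ``diagonal'' family whose three pair-projections are large in $M^2$, and extract the tuple $(u,v,w)$ by an $\aleph_1$-saturation argument.

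First, by Lemma \ref{L: projection finite to one}, I would choose an open neighborhood $X_0 \subset X$ of $(\hat x, \hat x, \hat x)$ on which $\pi$ is finite-to-one. Then $f|_{X_0} : X_0 \to Y$ is a finite-to-one loric map of loric $(4 \cdot \dim M)$-manifolds (by Lemma \ref{L: manifold preservation}(3) and the restriction property of loric maps, Lemma \ref{L: strong basic}(\ref{L: strong rest})), and $(\hat x, \hat x, \hat x)$ remains a ramification point. Applying Definition \ref{D: ramification purity} gives $\dim(\operatorname{Ram}(f|_{X_0})) \geq 4 \cdot \dim M - d_0$. Let $R_0 := \operatorname{Ram}(f|_{X_0}) \subset \operatorname{Ram}(f)$, definable over a countable parameter set $A$.

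Second, I would observe that the argument of Lemma \ref{L: submanifold of I} carries over verbatim with $(\hat a, \hat b)$ replaced by any $(\alpha, \beta)$ in a sufficiently small open neighborhood $V$ of $\hat x$ inside $A' \times B'$ (small enough that the witnesses remain in $X_0$): the only identity used is $h(\alpha, \beta, \alpha) = \beta$, which holds on all of $A' \times B' \times A'$. Consequently, $R_0 \supset \{(\alpha, \beta, \alpha, \beta, \alpha, \beta) : (\alpha, \beta) \in V\}$, and each of the three pair-projections $\pi_u, \pi_v, \pi_w : R_0 \to M^2$ has image containing $V$ and is therefore large in $M^2$.

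Finally, I would produce $(u, v, w)$ using $\aleph_1$-saturation. It suffices to verify finite consistency of the partial type over $A$ asserting (a) $(u, v, w) \in R_0$, (b) $\dim(uvw / A) \geq 4 \cdot \dim M - d_0$, and (c) each of $u, v, w$ satisfies any specified finite subfamily of $\operatorname{tp}_{\mathcal R}(\hat x / \emptyset)$. Any such finite subfamily cuts out a $\emptyset$-definable $U \subset M^2$ containing $\hat x$; since $\hat x$ is generic in $M^2$ over $\emptyset$, $U$ is large in $M^2$. Combined with the largeness of each of $\pi_u(R_0), \pi_v(R_0), \pi_w(R_0)$ in $M^2$, a fiber-dimension estimate in the t-minimal setting yields $\dim(R_0 \setminus (U \times U \times U)) < \dim R_0$, so that a generic of $R_0$ over $A$ can be chosen inside $U \times U \times U$ to realize the finite subtype. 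The main technical obstacle is the fiber-dimension estimate: one needs to combine the dominant nature of each projection with the low-dimensional nature of $M^2 \setminus U$, using standard dimension additivity for definable maps in geometric structures, and being careful about cells with non-generic fiber dimension.
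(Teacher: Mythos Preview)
Your argument has a genuine gap at the final ``fiber-dimension estimate.'' Knowing that each projection $\pi_u,\pi_v,\pi_w$ has large image in $M^2$ does \emph{not} imply that $\dim(R_0\setminus U^3)<\dim R_0$. The diagonal $\{(\alpha,\beta,\alpha,\beta,\alpha,\beta):(\alpha,\beta)\in V\}$ that you exhibit inside $R_0$ has dimension only $2\cdot\dim M$, whereas $\dim R_0\geq 4\cdot\dim M-d_0$; in the interesting case $\dim M>d_0$ this diagonal is strictly non-generic in $R_0$, so it tells you nothing about where the generic points of $R_0$ project. The best fiber bound available is that fibers of $\pi_u\restriction_I$ have dimension $\leq 2\cdot\dim M$, giving $\dim\bigl(R_0\cap\pi_u^{-1}(M^2\setminus U)\bigr)\leq(2\cdot\dim M-1)+2\cdot\dim M=4\cdot\dim M-1$, which does not beat $4\cdot\dim M-d_0$ when $d_0\geq 2$. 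There is no reason the bulk of $R_0$ cannot sit over a non-generic part of $M^2$ in one of the coordinates.

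The paper avoids this by reversing the order of operations: for each $\emptyset$-definable $S\ni\hat x$, it first restricts the domain to $X\cap U^3$ for an open $U\subset S$ containing $\hat x$ (obtained via the frontier inequality), and \emph{then} applies ramification purity to this restricted map. The resulting ramification locus is automatically contained in $S^3$ and still has dimension $\geq 4\cdot\dim M-d_0$ because $(\hat x,\hat x,\hat x)$ remains a ramification point. A generic of that locus satisfies (2) and (3) and lies in $S^3$; compactness over the type $\operatorname{tp}_{\mathcal R}(\hat x/\emptyset)$ then yields the desired $(u,v,w)$. The point is that ramification purity must be invoked once per $S$, not once globally.
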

    \begin{proof} Let $S\subset M^2$ be any $\mathcal R(\emptyset)$-definable set containing $\hat x$. By compactness, it suffices to find $(u,v,w)\in S^3$ satisfying (2) and (3).
    
    By the frontier inequality, $S$ contains a neighborhood $U$ of $\hat x$ in $M^2$: otherwise, $\hat x\in\operatorname{Fr}(M^2-S)$, so $\dim(\hat x)<\dim(M^2-S)\leq\dim(M^2)$, contradicting that $\hat x$ is generic in $M^2$.
    
    Let $U$ be as above. We may assume $U$ is definable and open in $M^2$. Thus $X\cap U^3$ is a loric $(4\cdot\dim(M))$-manifold (Lemma \ref{L: manifold preservation}(3)). Let $g$ be the restriction of $\pi$ to $X\cap U^3$. By the restriction property, $g$ is loric (Lemma \ref{L: strong basic}(\ref{L: strong rest})). By Lemma \ref{L: projection finite to one}, $g$ satisfies the requirements of ramification purity (Definition \ref{D: ramification purity}), so either $\operatorname{Ram}(g)=\emptyset$ or $\dim(\operatorname{Ram}(g))\geq 4\cdot\dim(M)-d_0$. By Lemma \ref{L: submanifold of I}, $(\hat x,\hat x,\hat x)\in\operatorname{Ram}(g)$, so $\dim(\operatorname{Ram}(g))\geq 4\cdot\dim(M)-d_0$. Now let $(u,v,w)$ be generic in $\operatorname{Ram}(g)$ over any set of parameters defining it. Then $(u,v,w)\in S^3$ and $(u,v,w)$ satisfies (3). Moreover, since $g$ is a restriction of $f$, clearly $\operatorname{Ram}(g)\subset\operatorname{Ram}(f)$, and thus $(u,v,w)$ satisfies (2). 
    \end{proof}

    \textbf{For the rest of the proof of Theorem \ref{T: general case from family}, fix $(u,v,w)=(u_1,u_2,v_1,v_2,w_1,w_2)$ as in Lemma \ref{L: lots of ramification}.} We now use Fact \ref{F: closure} to prove an `upper bound': $\dim(uvw)\leq 3\cdot\dim(M)$. Without a group, it will take some work to set up Fact \ref{F: closure}. The argument is essentially identical to the proof of Theorem 9.1 of \cite{CasACF0}.
    
    \begin{notation} For the rest of the proof, we define $P$ and $Q$ as follows. Let $Q$ be the set of $(x,y,y',z)\in M^8$ such that $y\neq y'$ and $(x,y,z),(x,y',z)\in I$. Then let $P$ be the set of $(x,y,y',z)\in Q$ such that $(x,z)\notin Z$.
    \end{notation}

    Note that given a point $q=(x,y,y',z)\in Q$, we will tacitly assume to label the eight coordinates of $q$ as $(x_1,x_2,y_1,y_2,y_1',y_2',z_1,z_2)$, without writing the full list each time.

    \begin{lemma}\label{L: rk P} $\rk(P)\leq 4$.
    \end{lemma}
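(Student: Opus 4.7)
The plan is to exhibit a finite-to-one projection from $P$ into a set of rank at most $4$, namely the projection onto the $(x,z)$ coordinates. Write $\rho: P \to M^4$ for the map $\rho(x,y,y',z) = (x,z)$. Since the image of $\rho$ lies in $M^4$, we have $\rk(\rho(P)) \leq \rk(M^4) = 4$. The content of the lemma is then that $\rho$ has finite fibers.

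To verify this, I would simply unpack the definitions of $P$ and $Z$. For any point $(x,y,y',z) \in P$, the defining condition of $P$ asserts that $(x,z) \notin Z$; by the definition of $Z$ this means the $\pi$-fiber $\pi^{-1}(x,z) = D_x \cap D^\vee_z$ is finite. On the other hand, the defining condition of $Q \supset P$ forces $(x,y,z), (x,y',z) \in I$, which puts both $y$ and $y'$ inside this finite fiber. Hence the $\rho$-fiber above $(x,z)$ embeds into the finite set of ordered pairs of distinct elements of $\pi^{-1}(x,z)$, and so is finite.

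Combining these observations, $\rho$ is a finite-to-one map into a set of rank at most $4$, so $\rk(P) \leq 4$ as claimed. I do not expect any genuine obstacle here: the lemma is essentially a direct unfolding of the definitions, engineered precisely so that removing the bad locus $Z$ bounds the amount of extra rank the coordinates $y,y'$ can contribute beyond $(x,z)$. The real use of this bound will presumably come in a subsequent step, where Fact \ref{F: closure} is applied to the $\mathcal{M}(\emptyset)$-definable set $P$ at a point built from the ramification data $(u,v,w)$ of Lemma \ref{L: lots of ramification}.
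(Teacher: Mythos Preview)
Your proof is correct and matches the paper's argument exactly: the projection $(x,y,y',z)\mapsto(x,z)$ is finite-to-one because $(x,z)\notin Z$ forces $y,y'$ to lie in the finite set $D_x\cap D^\vee_z$, so $\rk(P)\leq\rk(M^4)=4$. The paper's proof is just a one-line compression of what you wrote.
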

    \begin{proof} Since the definition of $P$ insists that $(x,z)\notin Z$, the projection $P\rightarrow M^4$, $(x,y,y',z)\mapsto(x,z)$, is automatically finite-to-one. Thus $\rk(P)\leq\rk(M^4)=4$.
    \end{proof}

     \begin{lemma}\label{L: frontier of P} $(u,v,v,w)\in\operatorname{Fr}(P)$.
    \end{lemma}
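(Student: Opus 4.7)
Here is how I would approach the proof.

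The plan is to separate the statement into its two components. First, $(u,v,v,w)\notin P$: the defining condition of $Q$ insists on $y\neq y'$, and this is violated when the two middle slots both equal $v$. Thus $(u,v,v,w)\in\operatorname{Fr}(P)=\overline{P}\setminus P$ will follow as soon as we show $(u,v,v,w)\in\overline{P}$.

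For the closure statement, I would fix an arbitrary basic open neighborhood $V_1\times V_2\times V_2'\times V_3$ of $(u,v,v,w)$ in $M^8$ and replace $V_2,V_2'$ by $W:=V_2\cap V_2'$, so it suffices to produce a point of $P$ in $V_1\times W\times W\times V_3$. Since $(u,v,w)\in\operatorname{Ram}(f)$, with $f=\pi|_X$ (or the finite-to-one restriction $g$ from Lemma \ref{L: lots of ramification}), the restriction of $f$ to the open neighborhood $X\cap(V_1\times W\times V_3)$ of $(u,v,w)$ in $X$ is not injective. Hence there exist distinct triples $(u_1,v_1,w_1),(u_2,v_2,w_2)$ in this neighborhood with $\pi(u_1,v_1,w_1)=\pi(u_2,v_2,w_2)$; since $\pi$ is the $(x,z)$-projection, we get $u_1=u_2$ and $w_1=w_2$, forcing $v_1\neq v_2$. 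The tuple $(u_1,v_1,v_2,w_1)$ then lies in $V_1\times W\times W\times V_3$, its two $I$-preimages $(u_1,v_1,w_1)$ and $(u_1,v_2,w_1)$ both belong to $X\subset I$, and $v_1\neq v_2$; so $(u_1,v_1,v_2,w_1)\in Q$.

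The only delicate point, and the main obstacle, is the side condition $(u_1,w_1)\notin Z$ needed to upgrade $Q$-membership to $P$-membership. The finite-to-oneness of $g$ does not automatically deliver this, because the $\pi$-fiber at $(u_1,w_1)$ inside $I$ may properly exceed the $g$-fiber inside $X$. I would resolve this by a dichotomy. If $(u,w)\in\overline{Z}$, then each coordinate of $(u,w)$ realizes one of the generic-in-$M$ types $\operatorname{tp}_{\mathcal R}(\hat a),\operatorname{tp}_{\mathcal R}(\hat b)$, so Fact \ref{F: closure} applies and yields $\operatorname{rk}(uw)\leq\operatorname{rk}(Z)\leq 1$ by Lemma \ref{L: Z small}; Fact \ref{F: dim rk} then gives $\dim(uw)\leq\dim(M)$, and combined with $v\in M^2$, $\dim(uvw)\leq 3\dim(M)$. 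Paired with $\dim(uvw)\geq 4\dim(M)-d_0$ from Lemma \ref{L: lots of ramification}, this already produces the conclusion $\dim(M)\leq d_0$ of Theorem \ref{T: general case from family}, making the present lemma superfluous in that branch. Otherwise $(u,w)\notin\overline{Z}$, and we may shrink $V_1,V_3$ at the outset so that $V_1\times V_3$ is disjoint from $\overline{Z}\supseteq Z$; this forces $(u_1,w_1)\notin Z$ and promotes $(u_1,v_1,v_2,w_1)$ to $P$.

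The hard part is precisely this passage from $Q$ to $P$. The ramification half is essentially symbolic and mirrors the group-case construction from Section \ref{group section}; what is genuinely new relative to that template is the need to decouple the $\pi$-fiber in $I$ from the $g$-fiber in $X$, which is why the dichotomy above (or an equivalent dimension-count argument exploiting $\operatorname{rk}(Z)\leq 1$ against a positive-dimensional ramification locus) is needed.
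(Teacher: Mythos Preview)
Your argument is correct and follows the same skeleton as the paper: use ramification of $f$ to land in $\operatorname{Fr}(Q)$, then handle the passage from $Q$ to $P$ by controlling $\overline{Z}$ via Fact \ref{F: closure} and Lemma \ref{L: Z small}. The only substantive difference is in how you dispose of the case $(u,w)\in\overline{Z}$. You treat it as a live branch, deriving $\dim(uvw)\leq 3\dim(M)$ and short-circuiting to the conclusion of Theorem \ref{T: general case from family}. The paper instead observes that this branch is \emph{impossible}: from $\rk(uw)\leq 1$ you already have a contradiction, since $u$ is generic in $M^2$ by Lemma \ref{L: lots of ramification}(1), so $\rk(u)=2$ and hence $\rk(uw)\geq 2$. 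This lets the paper conclude $(u,w)\notin\overline{Z}$ outright, avoiding the dichotomy and keeping the lemma self-contained. Your route is logically sound but slightly obscures that the lemma holds unconditionally.
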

    \begin{proof}
        It follows from $(u,v,w)\in\operatorname{Ram}(f)$ that $(u,v,v,w)\in\operatorname{Fr}(Q)$. So it suffices to show that $(u,w)\notin\overline Z$. But each $M$-coordinate of $(u,w)$ is generic in $M$ (Lemma \ref{L: lots of ramification}(1)). So if $(u,w)\in\overline Z$, then Fact \ref{F: closure} and Lemma \ref{L: Z small} give $\rk(uw)\leq\rk(Z)\leq 1$. This contradicts that $u$ is generic in $M^2$ (Lemma \ref{L: lots of ramification}(1)).
        \end{proof}   

        We would like to apply Fact \ref{F: closure} to $(u,v,v,w)\in\operatorname{Fr}(P)$, but we need to `preen' $P$ first so that Fact \ref{F: closure} gives us what we want. This is the point of Lemma \ref{L: avoiding E} below.

        First note that for $(x,y,y',z)\in P$, either $y_1\neq y_1'$ or $y_2\neq y_2'$. Let $P_1$ be the set of $(w,y,y',z)\in P$ with $y_1\neq y_1'$, and $P_2$ the set of $(x,y,y',z)\in P$ with $y_2\neq y_2'$. So $P=P_1\cup P_2$, and thus $(u,v,v,w)\in\operatorname{Fr}(P_i)$ for some $i=1,2$. Without loss of generality, assume $(u,v,v,w)\in\operatorname{Fr}(P_1)$.
    
    \begin{definition} Let $y_1\neq y_1'\in M$. We say that $y_1$ and $y_1'$ are \textit{extendable} if the set of extensions of $y_1,y_1'$ to an element of $P_1$ (with $y_1,y_1'$ in the appropriate coordinates) has rank at least 3.
    \end{definition}
    
    Let $E$ be the set of extendable pairs. Then $E$ is $\mathcal M(\emptyset)$-definable, and Lemma \ref{L: rk P} (and the definition of $E$) give $\rk(E)\leq 1$. Let $P'$ be the set of $(x,y,y',z)\in P_1$ with $(y_1,y'_1)$ not extendable.
    
    \begin{lemma}\label{L: avoiding E} $(u,v,v,w)\in\operatorname{Fr}(P')$.
    \end{lemma}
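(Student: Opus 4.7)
The plan is as follows. First note that $(u,v,v,w)\notin P_1\supseteq P'$, since positions $3$ and $5$ of $(u,v,v,w)$ coincide (both equal $v_1$), violating the constraint $y_1\neq y_1'$. So it suffices to show $(u,v,v,w)\in\overline{P'}$. We already have $(u,v,v,w)\in\overline{P_1}$ from the WLOG setup preceding the lemma, and since $P_1=P'\cup(P_1-P')$ gives $\overline{P_1}=\overline{P'}\cup\overline{P_1-P'}$, it suffices to rule out $(u,v,v,w)\in\overline{P_1-P'}$.

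Assume for contradiction that $(u,v,v,w)\in\overline{P_1-P'}$. By Lemma~\ref{L: lots of ramification}(1) every $M$-coordinate of $(u,v,v,w)$ is generic in $M$ over $\emptyset$, and $P_1-P'$ is $\mathcal{M}(\emptyset)$-definable, so Fact~\ref{F: closure} applies and yields $\rk(uvw)=\rk(uvvw)\leq\rk(P_1-P')$. I plan to invoke the \emph{moreover} clause of Fact~\ref{F: closure} for the coordinate pair at positions $4$ and $6$: on $(u,v,v,w)$ these are both $v_2$, while on a generic point of $P_1-P'$ they are $y_2,y_2'$. Applying the moreover clause requires rank equality, supplied as follows. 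Under the working assumption $\dim(M)>d_0$ (which we may adopt, as Theorem~\ref{T: general case from family} otherwise holds trivially), Lemma~\ref{L: lots of ramification}(3) and Fact~\ref{F: dim rk}(2) give $\rk(uvw)\geq 4-d_0/\dim(M)>3$, forcing $\rk(uvw)=4$; combined with $\rk(P_1-P')\leq\rk(P_1)\leq 4$ (Lemma~\ref{L: rk P}), the inequality from Fact~\ref{F: closure} tightens to equality, both sides equal to $4$.

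Next I verify that $y_2,y_2'$ are $\mathcal{M}$-independent on a generic point of $P_1-P'$. The key point: for generic $(y_1,y_1')\in E$ the fiber of $P_1-P'$ has rank $3$ (since $\rk(E)\leq 1$ and $\rk(P_1-P')=4$); this fiber is parametrized by $x\in M^2$ with $D_x$ meeting both lines $\{y_1\}\times M$ and $\{y_1'\}\times M$, and by the generic non-triviality of the curves in $\mathcal{D}$ (coming from the strong identity property and non-self-duality), $y_2$ and $y_2'$ are $\mathcal{M}$-algebraic over $(x,y_1,y_1')$. As $x$ ranges over its rank-$2$ set, the corresponding $(y_2,y_2')$ sweeps a rank-$2$ image in $M^2$, giving $\rk(y_2 y_2')=2$ and hence $\mathcal{M}$-independence. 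The moreover clause then forces the coordinates at positions $4$ and $6$ of $(u,v,v,w)$ to be $\mathcal{M}$-independent; but both equal $v_2$, so this forces $v_2\in\acl_{\mathcal{M}}(\emptyset)$, contradicting that $v_2$ is generic in $M$.

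The main obstacle will be the independence claim for $(y_2,y_2')$ on the generic of $P_1-P'$: it rests on generic non-triviality of the curves in $\mathcal{D}$ (so that $y_2$ is algebraic over $(x,y_1)$) and a transversality-style statement that $(y_2,y_2')$ sweeps $M^2$ as $x$ varies. Both should follow from the structure imposed on $\mathcal{D}$ by the strong identity property and non-self-duality, in the same spirit as the fiber-rank analysis used earlier in this section; verifying them cleanly is the only non-routine step of the argument.
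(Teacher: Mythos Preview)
Your approach has a genuine gap, and it also misses a much simpler route that the paper takes.

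\textbf{The gap.} Your argument hinges on showing that for a generic $(x,y,y',z)\in P_1-P'$, the coordinates $y_2$ and $y_2'$ are $\mathcal M$-independent. Your sketch for this does not go through. First, you assume $y_2\in\acl_{\mathcal M}(x,y_1)$, which requires the projection of $D_x$ to the first coordinate to be finite-to-one; but $\mathcal D$ is only assumed dualizable, not excellent, so curves in $\mathcal D$ need not be non-trivial. The strong identity property gives a local section of $D_{(a',b')}$ over the first coordinate near a generic point, but says nothing about finiteness of fibers. Second, even granting non-triviality, your claim that ``as $x$ ranges over its rank-$2$ set, $(y_2,y_2')$ sweeps a rank-$2$ image'' is an unproven transversality statement; nothing in the hypotheses on $\mathcal D$ forces it. The whole point of introducing $E$ and $P'$ is to \emph{engineer} independence of $(y_1,y_1')$ on $P'$ for use in the next lemma (by making the fiber over $(y_1,y_1')$ have rank $\leq 2$). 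There is no analogous mechanism guaranteeing independence of $(y_2,y_2')$ on the complement $P_1-P'$, and you should not expect one.

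\textbf{The simpler route.} The paper does not apply Fact~\ref{F: closure} to $P_1-P'$ at all. Instead it projects to the pair of coordinates $(y_1,y_1')$: by definition of $P'$, this projection sends $P_1-P'$ into $E$. So if $(u,v,v,w)\in\overline{P_1-P'}$, then $(v_1,v_1)\in\overline E$; since $E$ is disjoint from the diagonal, this means $(v_1,v_1)\in\operatorname{Fr}(E)$. Now $\rk(E)\leq 1$, so $\dim(E)\leq\dim M$, and the frontier inequality gives $\dim(v_1)<\dim M$, contradicting the genericity of $v_1$. This uses only the Strong Frontier Inequality (axiom (3) of Definition~\ref{D: HGS}), not the full strength of Fact~\ref{F: closure}, and avoids any independence computation.
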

    \begin{proof}
        Since $(u,v,v,w)\in\operatorname{Fr}(P_1)$, it suffices to show that $(v,v)\notin\overline E$. Suppose toward a contradiction that $(v,v)\in\overline E$. Then $(v_1,v_1)\in\operatorname{Fr}(E)$ (since $E$ is disjoint from the diagonal). By the frontier inequality (using $\rk(E)\leq 1$ and Fact \ref{F: dim rk}), we get $\dim(v_1)<\dim E\leq\dim M$. This contradicts that $v_1$ is generic in $M$.
    \end{proof}
    
  We now obtain our upper bound:
    
    \begin{lemma}\label{L: rk uvw}
        $\rk(uvw)\leq 3$.
    \end{lemma}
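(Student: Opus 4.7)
The plan is to apply weak detection of closures (Fact~\ref{F: closure}) to the eight-tuple $(u,v,v,w)$, which lies in $\overline{P'}$ by Lemma~\ref{L: avoiding E}. Writing out the coordinates as $(u_1,u_2,v_1,v_2,v_1,v_2,w_1,w_2)$, each entry is a coordinate of $u$, $v$, or $w$; by Lemma~\ref{L: lots of ramification}(1) these three tuples all realize $\operatorname{tp}_{\mathcal R}(\hat x/\emptyset)$ with $\hat x$ generic in $M^2$, so every coordinate is generic in $M$ over $\emptyset$. Since $P'$ is $\mathcal M(\emptyset)$-definable (in $\mathcal M$, hence in $\mathcal R$ by Assumption~\ref{A: R and M}(3)), the coordinate-genericity hypothesis of Fact~\ref{F: closure} is satisfied, and the first conclusion gives $\rk((u,v,v,w)) \le \rk(P')$. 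Together with Lemma~\ref{L: rk P} this already yields the weaker bound $\rk(uvw) \le 4$.

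To save the extra $1$, I plan to exploit a coordinate collision in the closure point: entries $3$ and $5$ of $(u,v,v,w)$ (the $y_1$- and $y_1'$-positions in the coordinate ordering of $P$) are both equal to $v_1$. Invoking the moreover clause of Fact~\ref{F: closure} at these two indices, it suffices to show that for generic $q \in P'$ the coordinates $y_1$ and $y_1'$ are $\mathcal M$-independent over $\emptyset$, under the assumption $\rk((u,v,v,w)) = \rk(P')$. If that independence holds, the moreover clause forces $v_1$ to be $\mathcal M$-independent from itself, which is impossible: since $v$ is generic in $M^2$, $v_1$ is generic in $M$ and so $v_1 \notin \acl_{\mathcal M}(\emptyset)$.

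The required independence is exactly what the non-extendability condition baked into the definition of $P'$ delivers. For every $q = (x,y,y',z) \in P'$, non-extendability of $(y_1, y_1')$ in $P_1$ says that the fiber of $P_1$ over $(y_1,y_1')$ has rank $\le 2$, so $\rk(q/y_1 y_1') \le 2$; additivity of rank then yields $\rk(y_1 y_1') \ge \rk(q) - 2$. Taking $q$ generic in $P'$, if $\rk(P') = 4$ this gives $\rk(y_1 y_1') \ge 2$, and since $(y_1,y_1') \in M^2$ the bound is tight, forcing $\mathcal M$-independence of $y_1$ and $y_1'$. Putting the pieces together: if $\rk(P') = 4$ then necessarily $\rk((u,v,v,w)) < \rk(P')$ (else the contradiction above fires), so $\rk(uvw) \le 3$; and if $\rk(P') \le 3$, the same bound is immediate from the first application of Fact~\ref{F: closure}. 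The only subtle step in the entire argument is the coordinate-collision trick — no new geometric input is required beyond the facts already accumulated in the section, which is precisely the reason $P'$ was pared down from $P_1$ via non-extendability in the first place.
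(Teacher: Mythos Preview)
Your proof is correct and follows essentially the same approach as the paper: apply Fact~\ref{F: closure} to $(u,v,v,w)\in\overline{P'}$, use the non-extendability condition defining $P'$ to force $\mathcal M$-independence of $y_1,y_1'$ at a generic point when $\rk(P')=4$, and derive a contradiction from the moreover clause (since $v_1$ would be $\mathcal M$-independent from itself yet non-algebraic). The only difference is cosmetic: the paper argues by contradiction from the outset, while you organize the same steps as a case split on $\rk(P')$.
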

    \begin{proof}
        Suppose not. Since $P'\subset P$, Lemma \ref{L: rk P} gives $\rk(P')\leq 4$. Moreover, by Lemma \ref{L: lots of ramification} all $M$-coordinates of $(u,v,v,w)$ are generic in $M$. By Fact \ref{F: closure}, the only possibility is $\rk(uvw)=\rk(P')=4$. We then move toward the second clause of Fact \ref{F: closure}. 
        \begin{claim} Let $(x,y,y',z)\in P'$ be generic. Then $y_1$ and $y_1'$ are $\mathcal M$-independent.
        \end{claim}
        \begin{proof} By genericity, $\rk(xyy'z)=4$. By definition of $P'$, the coordinates $y_1$ and $y_1'$ are not extendable, so that $\rk(xyy'z/y_1y_1')\leq 2$. Thus $\rk(y_1y_1')\geq 2$, i.e. $y_1$ and $y_1'$ are $\mathcal M$-independent $\mathcal M$-generics in $M$.
        \end{proof}
        By the claim and Fact \ref{F: closure}, the corresponding coordinates of $(u,v,v,w)$ must be $\mathcal M$-independent -- namely $v_1$ and $v_1$. But equal and independent elements must be algebraic, so $v_1\in\acl_{\mathcal M}(\emptyset)$. This contradicts that $v$ is generic in $M^2$.
        \end{proof}
 As in the proof of Theorem \ref{T: group case}, we now combine our upper and lower bounds to finish the proof of Theorem \ref{T: general case from family}. Namely, combining Lemmas \ref{L: lots of ramification} and \ref{L: rk uvw} (and using Fact \ref{F: dim rk} to replace $\rk$ with $\dim$), we have $$4\cdot\dim(M)-d_0\leq\dim(uvw)\leq 3\cdot\dim(M).$$ Thus $$4\cdot\dim(M)-d_0\leq 3\cdot\dim(M),$$ and rearranging gives $\dim(M)\leq d_0$.
    \end{proof}

    \section{Construction of the Family}\label{construction section}

    We now give a construction of a dualizable family of plane curves that has the strong identity property and is not self dual. The idea is to cleverly reparametrize the composition of two carefully chosen rank 1 families. To be precise, this only works if the composite family is (generically) almost faithful. If it isn't, we use a standard argument to show that $\mathcal M$ interprets a strongly minimal group -- thus we can use Theorem \ref{T: group case} instead of Theorem \ref{T: general case from family}. 
    \begin{theorem}\label{T: family exists} Either $\mathcal M$ interprets a strongly minimal group, or there is a dualizable family of plane curves in $\mathcal M$ which has the strong identity property and is not self-dual.
    \end{theorem}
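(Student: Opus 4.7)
\emph{Proof proposal.} The plan is to build $\mathcal D$ by reparametrizing the composition of two copies of a carefully chosen rank $1$ family through a generic auxiliary point, and to show that any failure of the required almost faithfulness pins down a Zilber group configuration, hence an interpretable strongly minimal group via Hrushovski's group configuration theorem. Concretely, using Fact \ref{F: nlm via families} with $r=1$ and Convention \ref{C: 0 def families}, fix an $\mathcal M(\emptyset)$-definable almost faithful family $\mathcal C = \{C_s : s\in S\}$ of strongly minimal plane curves with $S$ large in $M$ (passing to strongly minimal components and discarding a non-generic set as in Fact \ref{F: good families} if necessary, we may also assume each $C_s$ is non-trivial). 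Fix a generic $p\in M$ independent of $\mathcal C$, and for generic $(a,b)\in M^2$ set
$$D_{(a,b)} \;=\; \bigcup \bigl\{C_{s_1}\circ C_{s_2} : s_1,s_2\in S,\ (a,p)\in C_{s_1},\ (p,b)\in C_{s_2}\bigr\},$$
where $C_{s_1}\circ C_{s_2} = \{(x,z): \exists y\ (x,y)\in C_{s_1}\wedge (y,z)\in C_{s_2}\}$. By almost faithfulness of $\mathcal C$ and a rank count, both index sets are finite, so $D_{(a,b)}$ is a plane curve, and by construction $(a,b)\in D_{(a,b)}$ via the midpoint $p$.

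The strong identity property is then essentially built in. For $(a,b)$ generic, apply Lemma \ref{L: strong local homeo} to the finite-to-one projections $\{(a',y,s): (a',y)\in C_s\}\to M\times M$ and $\{(y,b',s): (y,b')\in C_s\}\to M\times M$ to obtain loric local inverses that pick out a continuous branch $s_1(a')$, together with the corresponding point $y_1(a')\in C_{s_1(a')}$ above $a'$, on a loric-manifold neighborhood $A$ of $a$, and similarly $s_2(b')$, $y_2(b')$ on a loric-manifold neighborhood $B$ of $b$. Tracing the composition then yields the loric map $h(a',b',a'')$ defined by applying the chosen branches of $C_{s_1(a')}$ and $C_{s_2(b')}$ to move from $a''$ through the intermediate coordinate $y_1(a')$ (adjusted so that $h(a',b',a')=b'$ holds by construction). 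The graph of $a''\mapsto(a'',h(a',b',a''))$ lies in $D_{(a',b')}$, so conditions (1)--(3) of Definition \ref{D: identity property} are verified.

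The remaining work is the dichotomy and non-self-duality. Consider whether the family $\mathcal D=\{D_{(a,b)} : (a,b)\in T\}$ (with $T$ the locus where the construction works) is almost faithful. If yes, Fact \ref{F: good families}(1) trims it to a genuinely dualizable subfamily while retaining the strong identity property on the generic part; and non-self-duality then follows from the independence of $p$: if for generic $(a,b)$ some generic $(c,d)$ had $D_{(a,b)}\cap D^\vee_{(c,d)}$ infinite, then a rank computation using the definitions of $D$ and $D^\vee$ would force the auxiliary point $p$ used in constructing $D_{(a,b)}$ to be algebraic over $(a,b,c,d)$, contradicting genericity of $p$. If almost faithfulness fails, then a positive-rank family of parameters $(a',b')$ satisfies $D_{(a',b')}$ almost equal to $D_{(a,b)}$; by Fact \ref{F: cb vs af} the canonical bases drop, and unwinding this in terms of the four rank $1$ parameters $s_1,s_2,s_1',s_2'$ and the incidence with the points $(a,p), (p,b)$ gives a Zilber group configuration in $\mathcal M$, producing a strongly minimal group.

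\emph{Main obstacle.} The routine parts are Steps 1--3 (the construction and the strong identity property, which follows cleanly from Lemma \ref{L: strong local homeo}). The real difficulty is the dichotomy step: one must set up the composition construction so that the failure of almost faithfulness unambiguously produces a Zilber configuration of the right shape (with three independent rank $1$ `coordinates' in general position), and simultaneously arrange, in the almost faithful case, that the generic independence of the auxiliary point $p$ from $(a,b)$ translates into non-self-duality of $\mathcal D$. Achieving both outcomes from a single construction likely requires iteratively refining $\mathcal C$ and/or the auxiliary data, and is the technical heart of Section \ref{construction section}.
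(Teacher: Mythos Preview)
Your construction and the group-configuration step are essentially the same as the paper's, and your argument for the strong identity property (local sections via Lemma \ref{L: strong local homeo}, then compose) is the right outline. The genuine gap is the non-self-duality.

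You compose a single $\emptyset$-definable rank-1 family $\mathcal C$ with itself through a generic point $p$. With that symmetric setup, each $D_{(a,b)}$ is a finite union of compositions $C_{s_1}\circ C_{s_2}$ with $s_1,s_2$ ranging in the $\emptyset$-definable rank-1 index set $S$; hence every strongly minimal component of any $D_{(a,b)}$ has canonical base of rank $\leq 2$. There is no reason the dual curves should behave differently, and your claim that self-duality would force $p\in\acl_{\mathcal M}(a,b,c,d)$ is not justified: if $S$ is a common strongly minimal piece of $D_{(a,b)}$ and $D^\vee_{(c,d)}$, then $\cb(S)\in\acl_{\mathcal M}(s_1s_2)$ with $\rk(s_1s_2)\leq 2$, while $D^\vee_{(c,d)}$ is $\mathcal M(pcd)$-definable; these constraints are perfectly compatible and do not pin down $p$. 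The single generic point $p$ carries only rank $1$ of ``extra data'', which is not enough to separate $\mathcal E$ from $\mathcal E^\vee$ by canonical-base rank.

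The paper breaks the symmetry deliberately. It composes the rank-1 family $\mathcal C$ with a \emph{different} rank-1 family $\{D_v:v\in V\}$, where $V$ is a strongly minimal subset of the index set of an $\emptyset$-definable \emph{excellent} (rank-2) family $\mathcal D$, chosen so that $w=\cb(V)$ has $\rk(w)\geq 7$. Now the original curves $E'_{ac}$ are still finite unions of compositions $D_v\circ C_t$ drawn from an $\emptyset$-definable rank-3 family, so their strongly minimal components have canonical base of rank $\leq 3$. For the dual side, the paper shows (Lemma \ref{L: dual curves at least 4}) that any strongly minimal piece of $E'^\vee_{x_0z_0}$ recovers $w$ over $b_0x_0z_0$, forcing its canonical base to have rank $\geq\rk(w)-3\geq 4$. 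The mismatch $3<4$ gives non-self-duality. So the missing idea in your proposal is precisely this: inject a high-rank parameter (the canonical base $w$ of an auxiliary strongly minimal set inside a larger excellent family) into one of the two composed families, so that the dual curves are forced to remember it.
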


    \begin{proof} First, by Conventions \ref{C: 0 def families} and \ref{C: 0 def excellent family}, and Fact \ref{F: nlm via cbs}, we may fix the following:

    \begin{itemize}
        \item $\mathcal C=\{C_t:t\in T\}$, an $\mathcal M(\emptyset)$-definable almost faithful family of non-trivial plane curves, where $T$ is large in $M$.
        \item $\mathcal D=\{D_u:u\in U\}$, an $\mathcal M(\emptyset)$-definable excellent family of plane curves.
        \item $V\subset U$, a strongly minimal set with $\rk(w)\geq 7$, where $w=\operatorname{Cb}(V)$. Editing finitely many points, we assume $V$ is $\mathcal M(w)$-definable.
        \item $b_0\in M$, generic over $w$.
    \end{itemize}

    \begin{notation} Throughout, we let $C$, $D$, and $D'$ be the graphs of $\mathcal C$, $\mathcal D$, and $\mathcal D'=\{D_v:v\in U\}$, respectively.
    \end{notation}

    \begin{lemma}\label{L: generically good} We may assume that all projections $C\rightarrow M^2$, $D\rightarrow M^3$, $D'\rightarrow M^2$, and $D\rightarrow M\times V$ are finite-to-one with large image.
    \end{lemma}
    \begin{proof} For the four projections $D\rightarrow M^3$ this is the definition of the excellence of $\mathcal D$. For the two rightmost projections $C\rightarrow M^2$, as well as the two projections $D'\rightarrow M\times V$, this follows from the non-triviality of each curve in each family. For the leftmost projection $C\rightarrow M^2$, we delete all elements of $M^2$ that belong to infinitely many $C_t$ (by almost faithfulness, there are only finitely many of them).
    
    Finally, we claim that $D'\rightarrow M^2$ is already finite-to-one, and thus has large image by rank considerations. Indeed, suppose $p\in M^2$ and there are infinitely many $v\in V$ with $p\in D_v$. Since $\mathcal D$ is excellent and $V$ is strongly minimal, the dual fiber $D^\vee_p$ has rank 1 and almost contains $V$. Thus $V$ is (up to almost equality) a strongly minimal component of $D^\vee_p$. This implies $w\in\acl_{\mathcal M}(p)$, so $\rk(w)\leq\rk(p)\leq 2$, contradicting that $\rk(w)\geq 7$.
    \end{proof}

    Assume moving forward that all projections from Lemma \ref{L: generically good} are finite-to-one with large image.

    Recall that if $S_1$ and $S_2$ are non-trivial plane curves, their \textit{composition} $S_2\circ S_1$ is the set of $(x,z)$ such that for some $y$ we have $(x,y)\in S_1$ and $(y,z)\in S_2$. It is again a non-trivial plane curve. We now use compositions to define (at least generically) the rank 2 family we will use for the proof:

    \begin{definition} For $(a,c)\in M^2$, define $E'_{ac}$ to be the union of all compositions $D_v\circ C_t$ with $v\in V$, $t\in T$, $(a,b_0)\in C_t$, and $(b_0,c)\in D_v$. Let $\mathcal E'$ be the family of $E'_{ac}$'s, and $E'\subset M^4$ the graph of $\mathcal E'$.
    \end{definition}

    Note that $\mathcal E'$ is $\mathcal M(wb_0)$-definable; each $E'_{ac}$ is either empty or a non-trivial plane curve; and for generic $(a,c)\in M^2$, $E'_{ac}$ is indeed a plane curve. We now split into cases according to whether $\mathcal E'$ is almost faithful.

    The following is a restatement of the group configuration theorem (see \cite{bou}) in the strongly minimal case:

    \begin{fact}\label{F: group con} Let $S_0$, $S_1$, and $S_2$ be non-trivial strongly minimal plane curves in $\mathcal M$, with canonical bases $s_0$, $s_1$, and $s_2$. Assume that $S_2$ is almost contained in $S_1\circ S_0$. Let $A$ be a set of parameters such that $\rk(s_0/A)=\rk(s_1/A)=1$, $\rk(s_0s_1/A)=2$, and $\rk(s_2/A)\leq 1$. Then $\mathcal M$ interprets a strongly minimal group.
    \end{fact}
    \begin{proof} Let $(x,z)\in S_2$ be generic over $As_0s_1s_2$. Then there is $y$ with $(x,y)\in S_0$ and $(y,z)\in S_1$. Then $(s_0,s_1,s_2,x,y,z)$ forms a rank 1 group configuration (see \cite{bou}).
    \end{proof}

    We first use Fact \ref{F: group con} to interpret a group when $\mathcal E'$ is not (generically) almost faithful. This is standard, but we sketch the details.

    \begin{lemma}\label{L: group configuration from small comp} Assume that for some generic $(a,c)\in M^2$ over $wb_0$, there are infinitely many $(a',c')\in M^2$ such that $E'_{ac}\cap E'_{a'c'}$ is infinite. Then $\mathcal M$ interprets a strongly minimal group.
    \end{lemma}
    \begin{proof}
        Assuming there are infinitely many such $(a',c')$, one can find such an $(a',c')$ with $\rk(a'c'/wb_0ac)\geq 1$. Since $E'_{ac}\cap E'_{a'c'}$ is infinite, there is a strongly minimal set $S_2$ which is almost contained in both $E'_{ac}$ and $E'_{a'c'}$. Let $s_2=\operatorname{Cb}(S_2)$. Then $s_2\in\acl_{\mathcal M}(wb_0ac)\cap\acl_{\mathcal M}(wb_0a'c')$.

        Now $E'_{ac}$ is a union of finitely many compositions $D_v\circ C_t$ for $t\in T$, $v\in V$. Since $S_2$ is strongly minimal, there exists a pair $(t,v)$ such that $D_v\circ C_t\subset E'_{ac}$ and $D_v\circ C_t$ is almost contained in $D_v\circ C_t$. 

        \begin{claim} $\rk(t/wb_0)=\rk(v/wb_0)=1$ and $\rk(tv/wb_0)=2$.
        \end{claim}
        \begin{proof}
            It follows from Lemma \ref{L: generically good} that $a$ and $t$ are $\mathcal M$-interalgebraic over $b_0$, as are $c$ and $v$. So it suffices to note that $\rk(a/wb_0)=\rk(c/wb_0)=1$ and $\rk(ac/wb_0)=2$, which follows since $(a,b)$ is assumed to be generic in $M^2$ over $wb_0$.
        \end{proof}

        Again by the strong minimality of $S_2$, there are strongly minimal sets $S_0\subset C_t$ and $S_1\subset D_v$, with canonical bases $s_0$ and $s_1$, respectively, such that $S_2$ is almost contained in $S_1\circ S_0$. By Fact \ref{F: cb vs af}, $s_0$ and $t$ are $\mathcal M$-interalgebraic over $wb_0$, as are $s_1$ and $v$. Thus the previous claim equivalently gives:

        \begin{claim} $\rk(s_0/wb_0)=\rk(s_1/wb_0)=1$ and $\rk(s_0s_1/wb_0)=2$.
        \end{claim}

        Finally, we check:

        \begin{claim} $\rk(s_2/wb_0)\leq 1$.
        \end{claim}
        \begin{proof} Assume $\rk(s_2/wb_0)\geq 2$. Since $s_2\in\acl_{\mathcal M}(wb_0ac)$ and $\rk(a'c'/wb_0ac)\geq 1$, we get $\rk(a'b'/wb_0s_2)\geq 1$. Then combining with $\rk(s_2/wb_0)\geq 2$ gets $\rk(a'b's_2/wb_0)\geq 3$. But since $s_2\in\acl_{\mathcal M}(wb_0a'c')$, this forces $\rk(a'c'/wb_0)\geq 3$, which is a contradiction since $a',c'\in M$.
        \end{proof}
        Finally, by the previous two claims, we have verified the hypotheses of Fact \ref{F: group con}. We conclude that $\mathcal M$ interprets a strongly minimal group, proving Lemma \ref{L: group configuration from small comp}.
    \end{proof}

    By Lemma \ref{L: group configuration from small comp}, we assume for the rest of the proof that each generic $E'_{ac}$ has infinite intersection with only finitely many $E'_{a'c'}$. By compactness, after deleting a non-generic subset of $E'$ of the form $M^2\times Z_0$ (defined without additional parameters), one obtains the graph of an almost faithful family of plane curves indexed by a large subset of $M^2$. Then by Fact \ref{F: good families}, there are non-generic sets $Z_1,Z_2\subset M^2$ (also defined without additional parameters) so that deleting $(Z_1\times M^2)\cup (M^2\times Z_2)$ yields the graph of a dualizable family. In particular, if $(x,z,a,c)\in E'$ and $(x,z),(a,c)$ are each generic in $M^2$ over $wb_0$, then the deleted set avoids a neighborhood of $(x,z),(a,c)$. Thus, the following is justified:

    \begin{convention}\label{fixed rank 2 family} For the rest of the proof, we fix a dualizable family $\mathcal E=\{E_y:y\in Y\}$ of plane curves which is $\mathcal M(wb_0)$-definable and whose graph $E$ is a large subset of $E'$. Moreover, we assume that if $(x,z,a,c)\in E'$ with $(x,z)$ and $(a,c)$ each generic in $M^2$ over $wb_0$, the sets $E'$ and $E$ agree in a neighborhood of $(x,z,a,c)$.
    \end{convention}

    We will show that $\mathcal E$ has the strong identity property and is not self-dual. This will involve a detailed computational analysis of the construction of $E'$. In order to express the computations smoothly, it will be convenient to introduce the following notions:

    \begin{definition}\label{D: configurations} Let $(a,b,c,x,y,z,t,u)\in M^6\times T\times U$.
    \begin{enumerate} 
    \item We say that $(a,b,c,x,y,z,t,u)$ is a $(\mathcal C,\mathcal D)$-configuration if $(a,b),(x,y)\in C_t$ and $(b,c),(y,z)\in D_u$.
    \item We say that $(a,b,c,x,y,z,t,u)$ is a $(\mathcal C,\mathcal D')$-configuration if it is a $(\mathcal C,\mathcal D)$-configuration and moreover $u\in V$.
    \item We say that $(a,b,c,x,y,z,t,u)$ is a $(\mathcal C,\mathcal D,b_0)$-configuration if it is a $(\mathcal C,\mathcal D)$-configuration and moreover $b=b_0$.
    \item We say that $(a,b,c,x,y,z)$ is a $(\mathcal C,\mathcal D',b_0)$-configuration if it is both a $(\mathcal C,\mathcal D')$-configuration and a $(\mathcal C,\mathcal D,b_0)$-configuration.
    \end{enumerate}
    \end{definition}

    Note that the assertion `$(x,z)\in E'_{ac}$' is equivalent to the existence of an extension to a $(\mathcal C,\mathcal D',b_0)$-configuration $(a,b_0,c,x,y,z,t,u)$. Lemma \ref{L: configuration properties} gives some additional basic properties:

    \begin{lemma}\label{L: configuration properties}
        Let $(a,b,c,x,y,z,t,u)$ be a $(\mathcal C,\mathcal D)$-configuration.
        \begin{enumerate}
            \item Any of $(a,b,t)$ is $\mathcal M$-algebraic over the other two. The same holds of $(x,y,t)$.
            \item $b$ and $c$ are $\mathcal M$-interalgebraic over $u$, as are $y$ and $z$.
            \item $\rk(abcxyztu)\leq 5$, and if equality holds then $u$ is $\mathcal M$-algebraic over $bcyz$.
        \end{enumerate}
    \end{lemma}
    \begin{proof} (1) and (2) are easy consequences of Lemma \ref{L: generically good}. For (3), the inequality $\rk(abcxyztu)\leq 5$ is trivial by (1) and (2), since the whole tuple is $\mathcal M$-algebraic over $bytu$ and by definition $\rk(bytu)\leq 5$. 

    Now suppose $\rk(abcxyztu)=5$. As above it follows that $\rk(bytu)=5$, and since $t\in M$ this implies that $\rk(byu)\geq 4$. In particular, $\rk(bcyzu)\geq 4$. Now since $\mathcal D$ is good we have $\rk(u/bc)\leq 1$, implying that $\rk(bcyz)\geq 3$. In particular, $bc$ and $yz$ are not $\mathcal M$-interalgebraic. Since $\mathcal D$ is good the dual $\mathcal D^\vee$ is almost faithful, so this implies that $D^\vee_{bc}\cap D^\vee_{yz}$ is finite, and in particular $u\in\acl_{\mathcal M}(bcyz)$.
    \end{proof}

    We are now ready to show:
    
    \begin{lemma}\label{L: the family has identity property} $\mathcal E$ has the strong identity property.
    \end{lemma}
    \begin{proof}
        Fix $(a_0,c_0)\in M^2$ generic over $wb_0$. We will work throughout in small neighborhoods of $(a_0,c_0,a_0,c_0)\in E'$. So by Convention \ref{fixed rank 2 family}, we will tacitly disregard the difference between $E$ and $E'$. 
        
        By Lemma \ref{L: generically good}, there are $t_0\in T$ with $(a_0,b_0)\in C_{t_0}$ and $v_0\in V$ with $(b_0,c_0)\in D_{v_0}$. By Lemma \ref{L: generically good}, $(a_0,b_0,t_0)$ and $(b_0,c_0,v_0)$ are generic points of $C$ and $D'$ over $w$, respectively.

        Notice that $(a_0,b_0,c_0,a_0,b_0,c_0,t_0,v_0)$ is a $(\mathcal C,\mathcal D',b_0)$-configuration. Our goal is to use the generic local loric homeomorphism property (Lemma \ref{L: strong local homeo}) to construct a continuous family of $(\mathcal C,\mathcal D,b_0)$-configurations near $(a_0,b_0,c_0,a_0,b_0,c_0,t_0,u_0)$, and then read off a branch of $E$ witnessing the strong identity property.
        
        First, by Lemma \ref{L: generically good}, the genericity of $(a_0,b_0,t_0)\in C$ and $(b_0,c_0,v_0)\in D'$, and Lemma \ref{L: strong local homeo}, each of the following projections restricts to a loric homeomorphism of neighborhoods of the given points:
        
        \begin{itemize}
            \item $C\rightarrow M^2$ at $(a_0,b_0,t_0)\mapsto(a_0,b_0)$.
            \item $D'\rightarrow M^2$ at $(b_0,c_0,v_0)\mapsto(b_0,c_0)$.
            \item $C\rightarrow M^2$ at $(a_0,b_0,t_0)\mapsto(a_0,t_0)$.
            \item $D'\rightarrow M^2$ at $(b_0,c_0,v_0)\mapsto(b_0,v_0)$.
        \end{itemize}
        
        Thus, after applying various shrinkings (and using the restriction property (Lemma \ref{L: strong basic}(\ref{L: strong rest}))), one can choose the following:
        
        \begin{itemize}
            \item $\mathcal R$-definable neighborhoods $X$ of $a_0$ in $M$, $Y$ of $b_0$ in $M$, $Z$ of $c_0$ in $M$, $T'$ of $t_0$ in $T$, and $V'$ of $v_0$ in $V$.
            \item loric maps $f:X\times Y\rightarrow T$, $g:Y\times Z\rightarrow V$, $i:T'\times X\rightarrow Y$, and $j:V'\times Y\rightarrow Z$,
        \end{itemize} so that the following hold:
        
        \begin{enumerate}
            \item $f(X\times Y)\subset T'$ and $g(Y\times Z)\subset V'$.
            \item If $(a,b)\in X\times Y$, and $t=f(a,b)$ then $(a,b)\in C_t$.
		\item If $(b,c)\in Y\times Z$, and $v=g(b,c)$ then $(b,c)\in D_v$.
            \item If $(t,a)\in T'\times X$ and $b=i(t,a)$ then $(a,b)\in C_t$.
    	\item If $(v,b)\in V'\times Y$ and $c=j(v,b)$ then $(b,c)\in D_v$.
		\item $f(a_0,b_0)=t_0$.
            \item $g(b_0,c_0)=v_0$.
            \item $i(t_0,a_0)=b_0$.
            \item $j(v_0,b_0)=c_0$.
                \end{enumerate}

        Now suppose we are given $(a,c,x)\in X\times Z\times X$. Then we can extend $(a,c,x)$ to a $(\mathcal C,\mathcal D',b_0)$-configuration using the above maps: first let $t=f(a,b_0)$ and $u=g(b_0,c)$; then let $y=i(t,x)$ and $z=j(v,y)$. Then the resulting tuple $(a,b_0,c,x,y,z,t,u)$ is a $(\mathcal C,\mathcal D',b_0)$-configuration. Moreover, the corresponding map $(a,c,x)\mapsto(a,b_0,c,x,y,z,t,u)$ on $X\times Z\times X$ is loric; and by (6)-(9) above, we have $(a_0,c_0,a_0)\mapsto(a_0,b_0,c_0,a_0,b_0,c_0,t_0,u_0)$. It follows that the composition $$(a,c,x)\mapsto(a,b_0,c,x,y,z,t,u)\mapsto z$$ witnesses the strong identity property at $(a_0,c_0)$.

        If the reader wishes, we can express the argument formally: for $(a,c,x)\in X\times Z\times X$, set $$h(a,b,x)= j(g(b_0,c),i(f(a,b_0),x)).$$ Using (1)-(9) above and the fact that $E$ and $E'$ agree in a neighborhood of $(a_0,c_0,a_0,c_0)$, one checks that (2) and (3) of Definition \ref{D: identity property} hold for $h$. Moreover, by shrinking we can ensure that $X$ and $Z$ are loric $\dim(M)$-manifolds. Finally, $h$ is loric by repeated applications of the coordinate-by-coordinate and composition properties of loric maps (Lemma \ref{L: strong basic}(\ref{L: strong coordinate} and \ref{L: strong composition})).
    \end{proof}

    To complete the proof of Theorem \ref{T: family exists}, we show that $\mathcal E$ is not self-dual. The main idea is that the curves in $\mathcal E$ come from a fixed $\mathcal M(\emptyset)$-definable family of rank 3 -- while the curves in $\mathcal E^{\vee}$ are tied to the choice of $V$ and $w$. This is why we chose $V$ and $w$ so that $\rk(w)\geq 7$. In fact, we show that strongly minimal subsets of generic curves in $\mathcal E^{\vee}$ have canonical bases of rank at least $\rk(w)-3\geq 4$, which shows that no such curve can also be found in $\mathcal E$. 
    
    \begin{proposition}\label{P: not self dual} $\mathcal E$ is not self-dual.
		\end{proposition}
		\begin{proof} Throughout, we let $E'^{\vee}$ be the set $E'$ with the variables switched, exactly as with families of plane curves. For $(x,z)\in M^2$ we let $E'^{\vee}_{xz}$ be the set of $(a,c)$ with $(a,c,x,z)\in E'^{\vee}$, or equivalently $(x,z,a,c)\in E'$.
  
        Now let $(x_0,z_0)\in M^2$ be generic over $wb_0$. Since $E\subset E'$, it suffices to show that there is no $(a,c)\in M^2$ with $E'^\vee_{x_0z_0}\cap E'_{ac}$ infinite. This follows from the ensuing two lemmas:
			\begin{lemma}\label{L: original curves at most 3} Let $(a,c)\in M^2$ be arbitrary, and let $S$ be strongly minimal and almost contained in $E'_{ac}$. Then $\rk(\cb(S))\leq 3$.
			\end{lemma}
		\begin{proof} $E'_{ac}$ is by definition a union of finitely many compositions $D_v\circ C_t$. Since $S$ is strongly minimal, there is a pair $(v,t)$ such that $S$ is almost contained in $D_v\cap C_t$. So $\cb(S)$ is $\mathcal M$-algebraic over $(v,t)$. Since $v\in U$, $t\in T$, and $U$ and $T$ are (in $\mathcal M$) $\emptyset$-definable sets of ranks 1 and 2, respectively, the lemma follows.
		\end{proof}
	\begin{lemma}\label{L: dual curves at least 4} Let $S$ be strongly minimal and almost contained in $E'^{\vee}_{x_0z_0}$. Then $\rk(\cb(S))\geq 4$.
	\end{lemma}
	\begin{proof} After editing finitely many points, assume $S$ is $\mathcal M$-definable over $s=\cb(S)$. Let $L$ be the set of $u\in U$ such that $(x,z,u)$ extends to a $(\mathcal C,\mathcal D,b_0)$ configuration $(a,b_0,c,x_0,y,z_0,t,u)$ with $(a,c)\in S$. So $L$ is $\mathcal M(b_0x_0z_0s)$-definable. 
    \begin{claim}\label{C: L at most 1} $\rk(L)\leq 1$.
    \end{claim}
    \begin{proof}
        If not, there is $u_0\in L$ with $\rk(u_0/b_0x_0z_0s)\geq 2$. By construction $\rk(b_0x_0z_0)=3$, so then $\rk(u_0b_0x_0z_0)\geq 5$. By definition there is a $(\mathcal C,\mathcal D,b_0)$-configuration $(a,b_0,c,x_0,y,z_0,t,u_0)$ with $(a,b)\in S$. Then $\rk(ab_0cx_0yz_0tu_0)\geq 5$, so by Lemma \ref{L: configuration properties}(3), we get $u_0\in\acl_{\mathcal M}(b_0cyz_0)$. 
        
        On the other hand, since $(a,c)\in S$ we have $\rk(ac/s)\leq 1$, so $\rk(ac/b_0x_0z_0s)\leq 1$; and by Lemma \ref{L: configuration properties}(1) and the previous paragraph, we have $t\in\acl_{\mathcal M}(ab_0)$, $y\in\acl_{\mathcal M}(x_0t)$, and $u_0\in\acl_{\mathcal M}(b_0cyz_0)$ -- thus $u_0\in\acl_{\mathcal M}(ab_0cx_0z_0s)$. So combining these gets $\rk(u_0/b_0x_0z_0s)\leq 1$, a contradiction.
    \end{proof}
    \begin{claim}\label{C: L almost contains V} $L$ almost contains $V$.
    \end{claim}
    \begin{proof}
        Let $(a,c)\in S$ be generic over $wb_0x_0z_0s$. Since $S$ is almost contained in $E'^\vee_{x_0z_0}$, we get that $(a,c)\in E'^\vee_{x_0z_0}$. So there is an extension to a $(\mathcal C,\mathcal D',b_0)$-configuration $(a,b_0,c,x_0,y,z_0,t,u)$. Note that $c\in\acl_{\mathcal M}(ub_0)$, by Lemma \ref{L: configuration properties}(2). Moreover, since $S\subset E'^\vee_{x_0z_0}$, clearly $S$ is a non-trivial plane curve, implying that $a\in\acl_{\mathcal M}(sc)$. So in total, we have $(a,c)\in\acl_{\mathcal M}(wb_0x_0z_0u)$. But by assumption $\rk(ac/wb_0x_0z_0)\geq 1$, so $ac$ has different ranks over $wb_0x_0z_0$ and $wb_0x_0z_0u$. This implies that $u\notin\acl_{\mathcal M}(wb_0x_0z_0)$, so that $\rk(u/wb_0x_0z_0)\geq 1$. But $u\in L\cap V$ by definition, and $L\cap V$ is $\mathcal M(wb_0x_0z_0)$-definable. Thus $\rk(L\cap V)\geq 1$, and since $V$ is strongly minimal, the claim follows. 
    \end{proof}
    
        By Claims \ref{C: L at most 1} and \ref{C: L almost contains V}, $V$ is (up to almost equality) a stationary component of $L$; so $w$ is $\mathcal M$-algebraic over $b_0x_0z_0s$, and thus $\rk(b_0x_0z_0s)\geq\rk(w)\geq 7$. Since $b_0,x_0,z_0\in M$, this implies $\rk(s)\geq 7-3=4$, proving Lemma \ref{L: dual curves at least 4}.
	\end{proof}
        To conclude the proof of Proposition \ref{P: not self dual}, note that if $E'_{ac}\cap E'^\vee_{x_0y_0}$ is infinite for some $(a,c)$, then there is a strongly minimal set almost contained in both $E'_{ac}$ and $E'^\vee_{x_0z_0}$. By Lemmas \ref{L: original curves at most 3} and \ref{L: dual curves at least 4}, this is impossible.
	\end{proof}
        Finally, by Lemma \ref{L: the family has identity property} and Proposition \ref{P: not self dual}, the proof of Theorem \ref{T: family exists} is complete.
    \end{proof}
    
    \section{Proof of the Main Theorem}\label{S: main thm}
        We now drop Assumption \ref{A: R and M}, and conclude our main theorem:
        \begin{theorem}[Theorem \ref{T: intro}] Let $\mathcal R$ be an o-minimal expansion of a field. Let $\mathcal M$ be a non-locally modular strongly minimal $\mathcal R$-relic. Then $\dim(M)=2$.
        \end{theorem}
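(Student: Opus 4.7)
The plan is to assemble the three main theorems proved in Sections~\ref{group section}, \ref{S: general case}, and~\ref{construction section}, together with the o-minimal manifold ramification purity of order $d_0 = 2$ (Proposition~\ref{ramification prop}, applicable by Fact~\ref{F: enough opens}) and the lower-bound result of Hasson--Onshuus--Peterzil cited as Step~1 in the introduction. First, I would verify that $(\mathcal{R},\tau,\mathcal{S})$ with the order topology and the full lore satisfies Assumption~\ref{A: R and M}(1) with $d_0 = 2$; the harmless linguistic adjustments in Assumption~\ref{A: R and M}(3) are arranged by passing to a finite sublanguage witnessing non-local modularity and naming countably many parameters, while elimination of imaginaries in $\mathcal{R}$ lets us assume $M \subseteq R^n$.

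For the lower bound $\dim(M) \geq 2$: strong minimality forces $M$ infinite, hence $\dim(M) \geq 1$, and the Hasson--Onshuus--Peterzil result rules out $\dim(M) = 1$. For the upper bound $\dim(M) \leq 2$, apply Theorem~\ref{T: family exists}. In the case where a dualizable family of plane curves in $\mathcal{M}$ with the strong identity property and not self-dual exists, Theorem~\ref{T: general case from family} gives $\dim(M) \leq d_0 = 2$ immediately.

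In the remaining case, $\mathcal{M}$ interprets a strongly minimal group $G$. Here the plan is to reduce to Theorem~\ref{T: group case} by passing to a new relic built from $G$. Using elimination of imaginaries in $\mathcal{R}$, realize $G$ as a definable subset $G' \subseteq R^m$, and form the $\mathcal{R}$-relic $\mathcal{G}$ whose universe is $G'$ and whose definable sets are the induced $\mathcal{M}^{\mathrm{eq}}$-definable subsets of Cartesian powers. Then $\mathcal{G}$ is strongly minimal by construction and non-locally modular, since in a strongly minimal theory all strongly minimal interpretable sets are non-orthogonal and local modularity transfers between $M$ and $G$. Using the structure theory of interpretable groups in o-minimal expansions of real closed fields -- in particular, Pillay's topological group theorem and the classification of strongly minimal o-minimal groups, which all have finite 2-torsion -- I would arrange that the group operations on $G'$ are loric, verifying Assumption~\ref{A: group ass} for $\mathcal{G}$. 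Theorem~\ref{T: group case} applied to $\mathcal{G}$ then gives $\dim_{\mathcal{R}}(G') \leq 2$.

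To finish, transfer the bound back to $M$. Non-orthogonality of $M$ and $G$ in $\mathcal{M}^{\mathrm{eq}}$ provides an $\mathcal{M}$-definable finite-to-finite correspondence $R \subseteq M \times G'$ defined on cofinite subsets. After elimination of imaginaries in $\mathcal{R}$, $R$ is $\mathcal{R}$-definable, and its projections to $M$ and to $G'$ are generically finite-to-one $\mathcal{R}$-definable maps, hence preserve $\mathcal{R}$-dimension on their images; it follows that $\dim_{\mathcal{R}}(M) = \dim_{\mathcal{R}}(G') \leq 2$. The main obstacle throughout is this group-interpretation case: while Theorem~\ref{T: group case} is self-contained under Assumption~\ref{A: group ass}, passing from a general relic that merely interprets a group to one whose universe carries that group structure -- with loric operations and finite 2-torsion -- relies on the o-minimal structure theory of interpretable groups, as foreshadowed by the remark at the start of Section~\ref{group section}.
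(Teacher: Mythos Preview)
Your proposal is correct and follows essentially the same route as the paper: set up Assumption~\ref{A: R and M} with the full lore and $d_0=2$ via Fact~\ref{F: enough opens} and Proposition~\ref{ramification prop}, invoke \cite{HaOnPe} for the lower bound, split on Theorem~\ref{T: family exists}, apply Theorem~\ref{T: general case from family} in one case, and in the group case pass to the induced structure $\mathcal G$ on $G$, use Pillay's theorem \cite{Pi5} and \cite{omeuler} to verify Assumption~\ref{A: group ass}, apply Theorem~\ref{T: group case}, and transfer back via the finite correspondence. The only omission is that you should explicitly pass to an $\aleph_1$-saturated elementary extension of $\mathcal R$ at the outset (this is part of Definition~\ref{D: HGS} and hence of Assumption~\ref{A: R and M}(1)); otherwise your argument matches the paper's proof in Section~\ref{S: main thm}.
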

        \begin{proof} It was shown in \cite{HaOnPe} that $\dim(M)\geq 2$; we show that $\dim(M)\leq 2$. 
        
        By elimination of imaginaries in $\mathcal R$, we may assume $M\subset R^n$ for some $n$. After passing to an elementary extension, restricting to a finite sublanguage of $\mathcal L(\mathcal M)$ witnessing local modularity, and adding a countable set of constants, we may assume $\mathcal R$ and $\mathcal M$ are as in Assumption \ref{A: R and M}, where $\mathcal S$ is the full lore and $d_0=2$ (here we use Proposition \ref{ramification prop} to get manifold ramification purity of order 2).
        
        If $\mathcal M$ does not interpret a strongly minimal group, then Theorem \ref{T: family exists} gives a dualizable family of plane curves that has the strong identity property and is not self-dual -- thus Theorem \ref{T: general case from family} gives $\dim(M)\leq 2$.
        
        Otherwise, assume $\mathcal M$ interprets the strongly minimal group $(G,+)$. By strong minimality, $G$ and $M$ are in finite correspondence, so it suffices to show that $\dim(G)\leq 2$. 
        
        Let $\mathcal G$ be the induced structure on $G$ from $\mathcal M$. Then $\mathcal G$ is strongly minimal and not locally modular. By \cite{Pi5} and \cite[Chapter 10, Theorem 1.8]{vdDries}, $G$ can be $\mathcal R$-definably embedded into some $R^m$ so that $(G,+)$ is a topological group with the subspace topology. By \cite[Proposition 6.1]{omeuler}), $(G,+)$ has finite 2-torsion. Thus, $\mathcal G$ satisfies Assumption \ref{A: group ass}, and Theorem \ref{T: group case} gives $\dim(G)\leq 2$. 
        \end{proof}

        \section{Relics of Compact Complex Manifolds}\label{S: CCM}

        The multi-sorted structure $\mathcal A$ of compact complex manifolds (CCM) consists of all compact complex manifolds (as distinct sorts), equipped with all analytic subsets of their finite products. $\mathcal A$ has quantifier elimination and elimination of imaginaries, and each sort of $\mathcal A$ has finite Morley rank. The model-theoretic study of $\mathcal A$ was initiated by Zilber (\cite{zilbereaster}); see \cite{moosa2010} for a brief survey.
        
        It is well-known that $\mathcal A$ is interpretable in the o-minimal structure $\mathbb R^{an}$ (the real field with all analytic functions restricted to compact domains). Thus, we can view $\mathcal A$ as an $\mathbb R^{an}$-relic, and Theorem \ref{T: intro} applies. In this section, we use Theorem \ref{T: intro} to give a short proof of the Zilber trichotomy for all relics of CCM. Note that the standard interpretation of $\mathcal A$ in $\mathbb R^{an}$ is 2-dimensional (as expected): if $X$ is a compact complex manifold of complex dimension $d$, then the interpretation identifies $X$ with an $\mathbb R^{an}$-definable set of o-minimal dimension $2d$.
        
        Note that $\mathcal A$ interprets a canonical copy of the complex field (by deleting a point from the projective line). Similarly, every model $\mathcal A'=\operatorname{Th}(\mathcal A)$ defines a canonical algebraically closed field (with the same definition). Let us call this field $K_{\mathcal A'}$. We will show that every non-locally modular strongly minimal relic of any $\mathcal A'\models\operatorname{Th}(\mathcal A)$ interprets a copy of $\mathcal K_{\mathcal A'}$.
               
        Other than Theorem \ref{T: intro}, our main tool is Moosa's non-standard Riemann existence theorem (\cite{MooRE}). This theorem will allow us (in some cases) to reduce the trichotomy over CCM to trichotomy over ACF.
        
        Let $\mathcal A'\models\operatorname{Th}(\mathcal A)$. By quantifier elimination, every $\mathcal A'$-definable set can be endowed with a `complex' dimension, which we denote $\dim_{CCM}(X)$. 
        
        \begin{fact}\label{F: Moosa Riemann}\cite{MooRE} Let $\mathcal A'\models\operatorname{Th}(\mathcal A)$. Let $X$ be $\mathcal A'$-definable with $\dim_{CCM}(X)=1$. Then $X$ is algebraic: $X$ is $\mathcal A'$-definably isomorphic to a one-dimensional constructible set over $K_{\mathcal A'}$ with only algebraic structure.
        \end{fact}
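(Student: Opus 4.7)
The plan is to combine classical complex-analytic geometry in the standard model with a model-theoretic transfer argument. In the standard model $\mathcal A$, any $1$-dimensional compact complex analytic set is algebraic: after decomposing into irreducible components and normalizing, one obtains a finite disjoint union of compact Riemann surfaces, each of which embeds into some projective space via sections of a sufficiently positive line bundle (Kodaira embedding / Riemann--Roch), and then by Chow's theorem the image is an algebraic curve. The task is to replay this story inside $\mathcal A'\models\operatorname{Th}(\mathcal A)$, using only definable ingredients that transfer by elementary equivalence.

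My first step would be a sequence of definable reductions. Using quantifier elimination in $\mathcal A$, irreducible decomposition of an analytic subset is uniformly definable, so I may assume $X$ is irreducible of $\dim_{CCM}=1$. Next, the normalization $\widetilde X\to X$ is birational and itself uniformly definable in CCM, so I may replace $X$ by $\widetilde X$ and assume $X$ is a smooth compact $1$-manifold (living inside a CCM sort, possibly after absorbing parameters into a family). At this point, in the standard model $X$ is a compact Riemann surface, and one has on it a canonical very ample line bundle: for example some power of the canonical bundle once the genus $g$ is at least $2$, with appropriate separate arguments for $g=0,1$. The genus itself is a definable invariant in CCM (computable from dimensions of definable sheaf-cohomology-type objects), so I can stratify by $g$ and treat each stratum uniformly.

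The heart of the proof is then exhibiting, uniformly and definably across a given stratum, an embedding $X\hookrightarrow\mathbb P^N$ (with $N$ bounded in terms of $g$) whose image is an algebraic curve. Classically this is done via the complete linear system of an appropriate divisor; the space of sections, the evaluation map, and the resulting map to projective space are all given by formulas in the language of CCM, because $\mathcal A$ has quantifier elimination and its basic relations include all analytic subsets. Having produced such an embedding in the standard model as a specific definable family, elementary equivalence immediately yields the analogous embedding in $\mathcal A'$. Chow's theorem on the image, together with its definable version in CCM, then shows that the image is an algebraic curve in the interpreted projective space over $K_{\mathcal A'}$, and the induced structure from $\mathcal A'$ agrees with the algebraic structure by quantifier elimination.

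The step I expect to be the main obstacle is the uniform, definable construction of the projective embedding across families. Within a single sort (a fixed compact Riemann surface), an embedding is easy, but $X$ may come as a member of a $\mathcal A'$-definable family whose standard counterpart sweeps out many non-isomorphic Riemann surfaces; one needs an embedding formula that works uniformly across the family and survives specialization, which is why stratifying by genus and using canonical linear systems is essential. Secondary difficulties are handling the passage to the normalization without losing definability, and extracting the algebraic structure on $K_{\mathcal A'}$ from the projective image while matching it against the originally given CCM structure on $X$; both are manageable once the uniform embedding is in hand.
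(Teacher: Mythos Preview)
The paper does not prove this statement at all: it is recorded as a Fact with a bare citation to \cite{MooRE} and then used as a black box in the proof of Theorem~\ref{T: CCM}. So there is no ``paper's own proof'' to compare against; what you have written is a proposed proof of Moosa's theorem itself, which is well beyond what the paper attempts.

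That said, your strategy (normalize, stratify by genus, embed via pluricanonical or other canonical linear systems, apply Chow) is a reasonable classical route to algebraicity of compact Riemann surfaces, but it is \emph{not} the route Moosa takes, and the obstacle you flag is real. The difficulty is that line bundles, their spaces of global sections, and the resulting evaluation maps are not directly objects of the CCM language; making ``$H^0(X,L)$'' uniformly definable across a family and controlling its dimension without access to analytic cohomology in the nonstandard model is genuinely delicate. Moosa's actual argument instead produces a finite branched map $X\to\mathbb P^1$ (a single nonconstant meromorphic function suffices, and such maps do vary definably in families) and then invokes a nonstandard form of the Riemann existence theorem: a finite branched cover of $\mathbb P^1$ is determined by finite combinatorial branch/monodromy data, which is automatically standard and hence forces the cover to be algebraic over $K_{\mathcal A'}$. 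This sidesteps the need to build a full projective embedding and is why the result is called a nonstandard Riemann existence theorem rather than a nonstandard Kodaira embedding.
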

        
        We are now ready to give the proof.
        
        \begin{theorem}\label{T: CCM} Let $\mathcal A'\models\operatorname{Th}(\mathcal A)$. Let $\mathcal M$ be a non-locally modular strongly minimal $\mathcal A'$-relic. Then $\mathcal M$ interprets a field $\mathcal A'$-definably isomorphic to $K_{\mathcal A'}$.
        \end{theorem}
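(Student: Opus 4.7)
The plan is to reduce Theorem \ref{T: CCM} to two previously established results: the main theorem of the present paper (Theorem \ref{T: intro}) and the Restricted Trichotomy for ACF$_0$-relics from \cite{CasACF0}, bridged by Moosa's non-standard Riemann existence theorem (Fact \ref{F: Moosa Riemann}).

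First, since $\mathcal A = \operatorname{CCM}$ is interpretable in $\mathbb R^{an}$, the given model $\mathcal A'$ is interpretable in some $\mathcal R^{\ast} \models \operatorname{Th}(\mathbb R^{an})$; $\mathcal R^{\ast}$ is an o-minimal expansion of a real closed field. Any $\mathcal A'$-relic $\mathcal M$ is automatically an $\mathcal R^{\ast}$-relic (composing interpretations), and $\mathcal M$ remains non-locally modular and strongly minimal. Theorem \ref{T: intro} then forces $\dim_{\mathcal R^{\ast}}(M) = 2$. Because the standard interpretation of a complex manifold $X$ of complex dimension $d$ into $\mathbb R^{an}$ produces an $\mathbb R^{an}$-definable set of o-minimal dimension $2d$, and this doubling is preserved in non-standard models, we obtain $\dim_{CCM}(M) = 1$.

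Next, apply Fact \ref{F: Moosa Riemann} to the $\mathcal A'$-definable set $M$: there is an $\mathcal A'$-definable bijection from $M$ onto a one-dimensional constructible set $M'$ over $K_{\mathcal A'}$ whose $\mathcal A'$-induced structure coincides with the $K_{\mathcal A'}$-algebraic structure. Transporting the language of $\mathcal M$ through this bijection presents $\mathcal M$ as a non-locally modular strongly minimal relic of the algebraically closed field of characteristic zero $K_{\mathcal A'}$. Now invoke the main theorem of \cite{CasACF0} (the Restricted Trichotomy for ACF$_0$): $\mathcal M$ interprets an algebraically closed field $\mathcal K$. Since $\mathcal K$ is an infinite field interpretable in the ACF $K_{\mathcal A'}$, a standard result (e.g.\ the classical one-dimensional case of Poizat--Hrushovski) gives that $\mathcal K$ is $K_{\mathcal A'}$-definably isomorphic to $K_{\mathcal A'}$, and hence $\mathcal A'$-definably isomorphic to $K_{\mathcal A'}$.

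There is essentially no obstacle here beyond bookkeeping; the two steps I would take most care with are (i) verifying that viewing $\mathcal M$ as an $\mathcal R^{\ast}$-relic genuinely preserves the hypotheses of Theorem \ref{T: intro} (non-local modularity and strong minimality are absolute, so this is formal) and translates CCM dimension correctly to o-minimal dimension through the standard interpretation, and (ii) confirming that the ``only algebraic structure'' clause of Fact \ref{F: Moosa Riemann} is strong enough to carry \emph{every} $\mathcal M$-definable relation on $M^n$ over to a $K_{\mathcal A'}$-constructible relation on $(M')^n$, so that $\mathcal M$ can genuinely be treated as a $K_{\mathcal A'}$-relic before quoting \cite{CasACF0}.
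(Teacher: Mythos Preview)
Your proposal is correct and follows essentially the same route as the paper: interpret $\mathcal A'$ in a model of $\operatorname{Th}(\mathbb R^{an})$, apply Theorem \ref{T: intro} to get o-minimal dimension $2$ and hence $\dim_{CCM}(M)=1$, then use Fact \ref{F: Moosa Riemann} to pass to a $K_{\mathcal A'}$-relic and invoke the ACF$_0$ trichotomy. The only item the paper makes explicit that you leave implicit is a preliminary reduction to sufficiently saturated $\mathcal A'$ and $\mathcal M$ (citing \cite[Theorem 9.9]{CasACF0}); conversely, you spell out the final identification of the interpreted field with $K_{\mathcal A'}$ via Poizat, which the paper absorbs into the citation of the ACF$_0$ result.
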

        \begin{proof} By standard and easy reductions, we may assume $\mathcal A'$ and $\mathcal M$ are sufficiently saturated (see the proof of Theorem 9.9 of \cite{CasACF0}).
        
       Passing to an elementary extension if necessary, we may view $\mathcal A'$ as an $\mathcal R$-relic for some $\mathcal R\models\operatorname{Th}(\mathbb R^{an})$ (using the same formulas that interpret $\mathcal A$ in $\mathbb R^{an}$ -- thus the interpretation of $\mathcal A'$ in $\mathcal R$ is also 2-dimensional). 
       
       Notice that $\mathcal M$ is both an $\mathcal A'$-relic and an $\mathcal R$-relic. By Theorem \ref{T: intro}, $\dim_{\mathcal R}(M)=2$. Thus $\dim_{CCM}(M)=1$. So by Fact \ref{F: Moosa Riemann}, $\mathcal M$ is isomorphic to a $(\mathcal K_{\mathcal A'},+,\cdot)$-relic. Now apply the trichotomy for ACF$_0$-relics (any of \cite{CasACF0}, \cite{HaSu}, and \cite{CaHaYe} apply here).
        \end{proof}
     
        \begin{remark} By standard arguments, Theorem \ref{T: CCM} implies that an arbitrary $\mathcal A'$-relic is 1-based or interprets a copy of $K_{\mathcal A'}$. This uses only that $\mathcal A$ has finite Morley rank, and works as in \cite[Theorem 9.9]{CasACF0}.
        \end{remark}

        \appendix
        \section{Some O-minimal Algebraic Topology}
        The proof of Theorem \ref{T: intro} crucially used that o-minimal expansions of fields satisfy manifold ramification purity of order 2 (Definition \ref{D: ramification purity}). This is really a fact in o-minimal algebraic topology, and otherwise has nothing to do with the rest of the paper. We now give a proof in the appendices. Appendix A gives some preliminary lemmas, and Appendix B contains the main argument.

        \begin{assumption}
            \textbf{Throughout the appendices, we fix $\mathcal R$, an o-minimal expansion of a real closed field}. All model-theoretic terminology (\textit{definable, dimension}, etc.) is now interpreted in the sense of $\mathcal R$.
        \end{assumption}

        \subsection{Invariance of Domain}

        Recall that a \textit{definable $n$-manifold} is a definable set equipped with a finite definable atlas, i.e. a finite open cover by sets definably homeomorphic to $R^n$. We first recall the definable version of the Invariance of Domain Theorem:

        \begin{fact}[\cite{pie}]\label{invariance of domain} Let $f:X\rightarrow Y$ be a definable, injective, continuous map of definable $n$-manifolds. Then $f$ is an open map.
        \end{fact}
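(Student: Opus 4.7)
The statement is local on both $X$ and $Y$, so using the chart structure of definable manifolds we may reduce to the following: if $U \subset R^n$ is open and $f : U \to R^n$ is a continuous injective definable map, then $f(U)$ is open. Equivalently, for every $x_0 \in U$ we must show that $f(x_0)$ lies in the interior of $f(U)$.

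The plan is to imitate the classical topological proof of invariance of domain. Fix $x_0 \in U$ and pick a small closed definable ball $\overline{B} \subset U$ centered at $x_0$, with boundary sphere $S$. Since $\overline{B}$ and $S$ are definably compact, so are their images $f(\overline{B})$ and $f(S)$, and injectivity gives $f(x_0) \notin f(S)$. By cell decomposition, $R^n \setminus f(S)$ has finitely many definably connected components; let $W$ be the one containing $f(x_0)$. The key claim is $W \subset f(\overline{B})$, from which openness of $f(U)$ at $f(x_0)$ is immediate.

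To verify the claim, consider the set $W \cap f(\overline{B})$. It contains $f(x_0)$, so is nonempty; it is closed in $W$ because $f(\overline{B})$ is closed in $R^n$; and by connectedness of $W$, it equals $W$ as soon as it is also open in $W$. The openness assertion is essentially invariance of domain at interior points of $B$, so to break the apparent circularity one argues by induction on $n$. The base case $n = 1$ follows from o-minimal monotonicity: a continuous injective definable function on an open interval is strictly monotone, hence an open map. For $n \geq 2$, one can either slice $B$ by parallel hyperplanes and combine the inductive hypothesis with a dimension-theoretic bookkeeping argument on the slices, or else invoke a definable degree / o-minimal singular homology theory (in the spirit of Berarducci--Otero) to detect that a point in $W$ which is not in the image would force a definable retraction of $\overline{B}$ onto $S$, contradicting the o-minimal analog of nontriviality of $H_{n-1}(S)$.

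The principal obstacle is this final openness step: upgrading from "generic openness" of $f$, which is a direct consequence of the Generic Local Homeomorphism Property and the equality $\dim f(U) = \dim U = n$, to openness at every point. All of the other ingredients (definable compactness of closed bounded sets, finiteness of connected components of semialgebraic-type complements, closedness of images of compact sets, the frontier inequality controlling low-dimensional behavior) transfer transparently from the classical proof via cell decomposition and o-minimal dimension theory. The genuinely new input is an o-minimal replacement for the algebraic-topological content of the classical theorem, which is exactly what the cited reference supplies.
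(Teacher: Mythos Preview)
The paper's proof is minimal: it simply cites \cite{pie} for the case $X=Y=R^n$ and observes that the manifold case follows by passing to charts, since openness is a local property. Your reduction to the $R^n$ case via charts is exactly this localization step, so on that point you agree with the paper.

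Where you go beyond the paper is in sketching what the cited reference contains. Here there is a structural issue worth flagging. Your clopen argument on $W\cap f(\overline B)$ does not actually reduce the problem: since $W$ avoids $f(S)$, one has $W\cap f(\overline B)=W\cap f(B)$, and asking this to be open in $W$ is precisely asking $f$ to be open on the open ball $B$, which is the statement being proved. You acknowledge this circularity, but the first proposed escape route (induction on $n$ via hyperplane slices) does not work as written: the restriction of $f$ to a hyperplane slice of $B$ lands in $R^n$, not $R^{n-1}$, so the inductive hypothesis has nothing to act on, and no amount of dimension bookkeeping repairs this without already importing a separation or degree argument.

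Your second option is the correct one, and is in fact the content of the cited reference: one needs o-minimal singular homology (or an equivalent degree theory) to establish the no-retraction theorem or the Jordan--Brouwer separation property, from which invariance of domain follows exactly as in the classical setting. So your proposal is best read not as an independent proof but as a correct identification of what must be imported, together with one spurious alternative that should be dropped.
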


        This is proven in the case $X=Y=R^n$ in \cite{pie}, and the case of arbitrary definable manifolds follows because openness can be checked locally.

        \subsection{Connectedness Notions and Removing a Small Set}
      
         Working with definable manifolds, one can give definable analogs of various topological notions (e.g. paths, homotopies, fundamental groups, covering maps, homology, etc.). For most of these notions, we will not give more details here; see e.g. \cite{edmote} or \cite{BerOte} for a more thorough account. 

        Let $X$ be a definable manifold. $X$ is \textit{definably connected} if any two points in $X$ can be joined by a \textit{definable path} (i.e. a continuous definable map from the interval $[0,1]$ in $R$). $X$ is \textit{definably simply connected} if $X$ is definably connected with trivial definable fundamental group (i.e. every definable loop in $X$ admits a definable homotopy with the trivial loop). As expected, $R^n$ is definably simply connected for all $n$.

         We will use that definable connectedness and definable simple connectedness are unchanged by deleting subsets of appropriately small dimension. These facts are likely known, but we did not see them in the literature, so we give the proofs (they are also in the author's PhD thesis).

         \begin{lemma}\label{manifold stays connected} Let $T\subset R^n$ be definable.
		\begin{enumerate}
			\item If $\dim(T)\leq n-2$ then $R^n-T$ is definably connected.
			\item If $\dim(T)\leq n-3$ then $R^n-T$ is definably simply connected. 
		\end{enumerate}
	\end{lemma}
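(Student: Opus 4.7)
The plan is to prove both parts by a generic perturbation argument: parametrize a family of candidate paths (for (1)) or null-homotopies (for (2)) by an auxiliary point, and use a dimension count to show that most choices of parameter produce a path or homotopy avoiding $T$.

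For (1), fix $p,q \in R^n - T$. For each $r \in R^n$, let $\gamma_r : [0,1] \to R^n$ be the broken linear path going from $p$ to $r$ on $[0,1/2]$ and from $r$ to $q$ on $[1/2,1]$; this is a definable family of definable paths. For fixed $s \in (0,1) \setminus \{1/2\}$, solving $\gamma_r(s) = z$ linearly determines $r$ as an affine function of $z$, so the set of ``bad'' $r$, i.e.\ those with $\gamma_r([0,1]) \cap T \neq \emptyset$, is the image of a definable map from a set of dimension $\dim(T) + 1 \leq n-1$ into $R^n$. Since $n-1 < n$, a generic $r \in R^n$ gives a definable path $\gamma_r$ from $p$ to $q$ contained in $R^n - T$.

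For (2), note first that part (1) applies (since $n-3 \leq n-2$), so $R^n - T$ is definably connected and its definable fundamental group is well-defined. Given a definable loop $\gamma : [0,1] \to R^n - T$, consider for each $c \in R^n$ the affine cone homotopy $H_c(s,t) = (1-t)\gamma(s) + tc$, which is a definable free homotopy in $R^n$ from $\gamma$ to the constant loop at $c$. Solving $H_c(s,t) = z$ for $c$ expresses the set of bad $c$ as a union of $T$ (arising at $t=1$) and the image of a definable map from a set of dimension at most $\dim(T) + 2 \leq n-1$ into $R^n$. Thus for generic $c \in R^n$ we obtain a free definable null-homotopy of $\gamma$ in $R^n - T$. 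Using a definable path from $\gamma(0)$ to $c$ (guaranteed by part (1)) and the standard ``lollipop'' construction, one converts this free null-homotopy into a based definable null-homotopy, showing that the definable fundamental group of $R^n - T$ is trivial.

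The arguments are essentially elementary dimension counts; the main bookkeeping obstacle is verifying that the parametrizing maps and the conversion from free to based null-homotopy genuinely take place in the definable category, but each step is a straightforward definable analog of the classical topological construction.
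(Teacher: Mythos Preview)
Your proof is correct and follows essentially the same generic-perturbation strategy as the paper: parametrize an $R^n$-family of candidate paths (resp.\ null-homotopies), and use the dimension count $\dim(T)+1\leq n-1$ (resp.\ $\dim(T)+2\leq n-1$) to find a good parameter. The only cosmetic differences are in the choice of families---the paper uses additive perturbations $\gamma_v(x)=\gamma(x)+x(1-x)v$ and $h_v(x,y)=h(x,y)+x(1-x)v$ rather than your broken lines and cones---and your explicit free-to-based conversion in (2) is in fact slightly more careful than the paper, whose perturbed homotopy $h_v$ also fails to fix the basepoint but where this point is passed over in silence.
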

        \begin{proof}
        \begin{enumerate}
            \item Let $a,b\in R^n-T$. Let $\gamma:[0,1]\rightarrow R^n$ be a definable path connecting $a$ and $b$ in $R^n$ (e.g. a straight line will do). We show that $\gamma$ can be modified to avoid $T$.

            Let $f:[0,1]\rightarrow R$ be continuous and definable such that $f(x)=0$ if and only if $x=0,1$ (e.g. take $f(x)=x(1-x)$). For each $v\in R^n$, define a path $\gamma_v$ from $a$ to $b$ by $\gamma_v(x)=\gamma(x)+f(x)\cdot v$. Let $S$ be the set of $v$ such that $\gamma_v$ meets $T$. We show that $\dim(S)<n$, and so there is some $v\in R^n-S$ (i.e. some $\gamma_v$ avoids $T$).

            Let $B$ be the set of $(v,x,t)$ with $\gamma_v(x)=t\in T$. If $v\in S$ then $v$ extends to an element of $B$. On the other hand, each $(x,t)\in[0,1]\times T$ extends to at most one element of $B$: given $(v,x,t)\in B$ we have $v=\frac{1}{f(x)}\cdot(t-\gamma(x))$, so $v$ is determined by $x$ and $t$.
            
            Thus, we have $$\dim(S)\leq\dim(B)\leq\dim([0,1]\times T)=\dim(T)+1\leq(n-2)+1<n,$$ as desired.
            
            \item Similar to (a). First apply (a) to see that $R^n-T$ is definably connected. Now let $\gamma$ be a definable loop at $a$ in $R^n-T$. Let $h:[0,1]^2\rightarrow R^n$ be a definable homotopy to the trivial loop in $R^n$ (so $h(0,-)$ is $\gamma$ and $h(1,-)$ is constant at $a$). We modify $h$ to avoid $T$.

            Let $f$ be as in (a). For $v\in R^n$, define $h_v:[0,1]^2\rightarrow R^n$ by $h_v(x,y)=h(x,y)+f(x)\cdot v$. Let $S$ be the set of $v$ with $h_v$ meeting $T$, and let $B$ be the set of $(v,x,y,t)$ with $h_v(x,y)=t\in T$. As in (a), if $v\in S$ then $v$ extends to an element of $B$; and each $(x,y,t)$ determines at most one element of $B$. Thus, we have $$\dim(S)\leq\dim(B)\leq\dim([0,1]^2\times T)=\dim(T)+2\leq(n-3)+2<n,$$ and so there is some $v\in R^n-S$ (i.e. some $h_v$ avoiding $T$).            
        \end{enumerate}
        \end{proof}

        \begin{remark} Lemma \ref{manifold stays connected}(1) (resp. (2)) should remain true with $R^n$ replaced by any definably connected (resp. definably simply connected) definable $n$-manifold. For (1), this is straightforward: one uses o-minimality and a finite atlas to break $\gamma$ into finitely many segments, each of which is contained in an open copy of $R^n$; one then handles each segment as in the proof above. For (2), things are less clear, and it would be nice to see a more general proof.
        \end{remark}

        \subsection{Definable Coverings}

        Edmundo and Ortero (\cite{edmote}) developed the Galois correspondence for definable covering maps. Their definition of definable covers requires a `finite atlas', similar to definable manifolds. Unfortunately, we need the Galois correspondence without assuming a finite atlas. In this section, we prove (thankfully) that the existence of a finite atlas is automatic. Let us be more precise:

        \begin{definition}\label{D: definable cover} Let $f:X\rightarrow Y$ be a continuous, surjective, definable map of definable sets.
        \begin{enumerate}
     \item If $U\subset Y$ is definable and open, we say that $f$ \textit{definably trivializes} over $U$ if $f^{-1}(U)$ is the disjoint union of finitely many definable subsets $S_1,...,S_k$ such that the restriction of $f$ to each $S_j$ is a homeomorphism with $U$.
     \item A \textit{trivializing atlas} for $f$ is an open cover $\{U_i:i\in I\}$ of $Y$ by definable open sets over which $f$ definably trivializes. If $I$ is finite, then $\{U_i\}$ is a \textit{finite trivializing atlas}.
     \item We say that $f$ is a \textit{weak definable cover} if it admits a trivializing atlas, and that $f$ is a \textit{definable cover} if it admits a finite trivializing atlas.
     \end{enumerate}
     \end{definition}

     Fact \ref{F: galois correspondence} is (essentially) proven in \cite{edmote}. Our contribution is Proposition \ref{P: covering equivalence}. The proposition should probably be already known, but it seems not to appear in the literature. Our proof is equivalent to one from the author's PhD thesis.

     \begin{fact}\label{F: galois correspondence} Let $X$ and $Y$ be definable $n$-manifolds. Assume that $X$ is definably connected and $Y$ is definably simply connected. Let $f:X\rightarrow Y$ be a definable cover. Then $f$ is injective.
     \end{fact}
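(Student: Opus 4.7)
The plan is to adapt the classical covering-space argument from algebraic topology: given two preimages of the same basepoint, connect them by a path in $X$, push it down to a loop in $Y$, contract that loop via a homotopy in $Y$, and then lift the homotopy back to $X$ to force the two preimages to coincide. The three ingredients needed are (i) uniqueness of lifts on definably connected domains, (ii) definable path lifting, and (iii) definable homotopy lifting. The finiteness of the trivializing atlas, which is what distinguishes \emph{definable covers} from \emph{weak definable covers} in Definition \ref{D: definable cover}, is precisely what allows these lifting results to go through.

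First I would establish a uniqueness lemma: if $g_1, g_2 : W \to X$ are definable continuous maps from a definably connected definable set $W$ satisfying $f \circ g_1 = f \circ g_2$ and agreeing at a single point, then $g_1 \equiv g_2$. The finite trivializing atlas gives, at each point of $Y$, a neighborhood over which the sheets are disjoint clopen subsets, so the locus $\{w : g_1(w) = g_2(w)\}$ is both open and closed in $W$, hence equal to $W$. Then I would prove definable path lifting: given a definable path $\gamma : [0,1] \to Y$ and $\tilde x_0 \in f^{-1}(\gamma(0))$, there exists a unique definable lift $\tilde\gamma$ starting at $\tilde x_0$. Let $\{U_1, \dots, U_k\}$ be a finite trivializing atlas. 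By o-minimal cell decomposition of $[0,1]$ compatible with the finite cover $\{\gamma^{-1}(U_i)\}$, the interval breaks into finitely many points and open subintervals, each mapping into a single $U_i$. On each piece, lift $\gamma$ through the unique sheet of $f^{-1}(U_i)$ containing the endpoint already constructed, and paste using the uniqueness lemma. The same strategy, applied to a cell decomposition of $[0,1]^2$ compatible with $\{h^{-1}(U_i)\}$, yields definable homotopy lifting for a homotopy $h : [0,1]^2 \to Y$ once a lift on one edge is prescribed.

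Granting these lifting statements, the injectivity argument is clean. Suppose $x_1, x_2 \in f^{-1}(y_0)$. By definable connectedness of $X$, there is a definable path $\tilde\gamma$ from $x_1$ to $x_2$; let $\gamma = f \circ \tilde\gamma$, a definable loop at $y_0$. By definable simple connectedness of $Y$, there is a definable homotopy $h : [0,1]^2 \to Y$ with $h(s,0) = \gamma(s)$, $h(s,1) = y_0$, and $h(0,t) = h(1,t) = y_0$ for all $s,t$. Lift $h$ to $\tilde h : [0,1]^2 \to X$ with $\tilde h(-,0) = \tilde\gamma$. The two vertical edges are lifts of the constant loop at $y_0$ and therefore, by the uniqueness lemma applied within a single fiber (which is definably discrete), must themselves be constant with values $x_1$ and $x_2$ respectively. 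The top edge $\tilde h(-,1)$ is likewise a lift of the constant loop at $y_0$, starting at $x_1$, so it is constantly $x_1$. Evaluating at $s=1$ gives $x_2 = \tilde h(1,1) = x_1$.

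The main obstacle I anticipate is the clean setup of definable homotopy lifting in Step (iii). In the classical topological proof, one subdivides $[0,1]^2$ into a grid fine enough that each small square maps into a single trivializing open set, using the Lebesgue number lemma; the o-minimal substitute must be cell decomposition, and one has to be careful that the inductive pasting across cells remains internally consistent. Here the finiteness of the trivializing atlas is essential: it produces only finitely many pullback pieces $h^{-1}(U_i)$, so the cell decomposition has finitely many cells, each definably connected, and the uniqueness lemma can be invoked at every pasting step. Once that machinery is in place, the remaining argument is largely formal manipulation of the lifts.
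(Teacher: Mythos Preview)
Your sketch is essentially correct and is, in substance, a reconstruction of the argument in Edmundo--Otero \cite{edmote} that the paper simply cites. The paper's own proof of this Fact is one line: it invokes the Galois correspondence for definable covers developed in \cite[Section 2]{edmote} (specifically Corollary 2.8 and Proposition 2.10), and does not reprove path lifting or homotopy lifting. So the difference is purely one of presentation: the paper treats the statement as a black-box import from the literature, while you have unpacked what that black box contains. Your identification of the finite trivializing atlas as the key input enabling o-minimal cell decomposition to replace the Lebesgue number argument is exactly right, and is the reason the paper goes to the trouble of proving Proposition \ref{P: covering equivalence} (that weak definable covers are automatically definable covers). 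The one place to be careful, as you note, is the inductive gluing in the homotopy lifting step: cells in a decomposition of $[0,1]^2$ are not rectangles, so one must order them appropriately (say by closure-inclusion or by working up through dimensions) and invoke the uniqueness lemma at each overlap; but since each cell is definably connected and maps into a single trivializing chart, the lift on each cell is determined by a single boundary value, and the argument goes through.
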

        \begin{proof} This follows from the Galois correspondence, which is developed in Section 2 of \cite{edmote} (precisely, see Corollary 2.8 and Proposition 2.10).
        \end{proof}

        \begin{proposition}\label{P: covering equivalence}
            Let $X$ and $Y$ be definable sets, and let $f:X\rightarrow Y$ be a weak definable cover. Then $f$ is a definable cover. In particular, Fact \ref{F: galois correspondence} holds for weak definable covers. 
        \end{proposition}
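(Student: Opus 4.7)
The plan is to (i) reduce to the case where $|f^{-1}(y)|$ is constant on $Y$; (ii) cover $Y$ by finitely many definably contractible open sets via definable triangulation; and (iii) prove the Key Lemma that any weak definable cover trivialises over a definably simply connected open set. For (i), the function $y\mapsto |f^{-1}(y)|$ is definable and, by the weak cover hypothesis, locally constant, so its image is a definable discrete subset of $\mathbb N\subset R$, which is finite by o-minimality of $\mathcal R$. The sets $Y_j=\{y:|f^{-1}(y)|=j\}$ form a finite partition of $Y$ into definable open pieces (openness follows from the existence of a trivialising neighborhood with $j$ sheets around any $y\in Y_j$), and each restriction $f\restriction_{f^{-1}(Y_j)}\to Y_j$ can be handled separately; so I may assume $|f^{-1}(y)|=k$ throughout. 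For (ii), definable triangulation (available for every definable set in an o-minimal expansion of a real closed field) represents $Y$ as the image of a finite simplicial complex $|K|$ under a definable homeomorphism, and the open stars $\mathrm{st}(v_1),\dots,\mathrm{st}(v_r)$ of the vertices form a finite definable open cover of $Y$ by definably contractible, hence definably connected and definably simply connected, open sets.

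The heart of the argument is the Key Lemma: \emph{if $U\subset Y$ is a definable, definably connected, definably simply connected open set, then $f$ definably trivialises over $U$.} Granting this, applying it to each $\mathrm{st}(v_j)$ produces the desired finite trivialising atlas. I would prove the Key Lemma via definable path-lifting. Fix $y_0\in U$ with $f^{-1}(y_0)=\{x_1,\dots,x_k\}$. For each $y\in U$, choose, via a definable Skolem function and definable path-connectedness of $U$, a definable continuous path $\gamma_y:[0,1]\to U$ from $y_0$ to $y$. Using the weak cover property together with the definable compactness of $[0,1]$, cover the image of $\gamma_y$ by finitely many trivialising opens, piece together the corresponding uniquely-determined local sections, and thereby lift $\gamma_y$ to a definable continuous path $\tilde\gamma_{y,i}:[0,1]\to X$ with $\tilde\gamma_{y,i}(0)=x_i$; set $\sigma_i(y):=\tilde\gamma_{y,i}(1)$. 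Definable simple connectedness of $U$ guarantees that $\sigma_i(y)$ is independent of the chosen path; the local homeomorphism property of $f$ gives continuity; and the whole construction is definable. The images $S_i:=\sigma_i(U)$ are then the definably connected components of $f^{-1}(U)$, they partition $f^{-1}(U)$, and each $f\restriction_{S_i}$ is a definable homeomorphism onto $U$ with inverse $\sigma_i$, as required.

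The principal obstacle is carrying out the definable path-lifting and monodromy-vanishing arguments rigorously within o-minimal algebraic topology — in particular, verifying that the sections $\sigma_i$ are definable, continuous, and independent of the chosen paths — without appealing to Fact \ref{F: galois correspondence}, which would introduce circularity. Once the Key Lemma is in hand, the assembly of a finite trivialising atlas from the finitely many definably contractible open stars is immediate, and then injectivity in the Galois correspondence follows by the same reasoning as in the finite-atlas case.
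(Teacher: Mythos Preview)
Your reduction (i) to constant fibre size matches the paper. From there the approaches diverge completely.

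The paper avoids homotopy entirely. Assuming $X\subset R^n$, for each linear $T:R^n\to R$ let $\operatorname{Split}(T)=\{y\in Y: T\restriction_{f^{-1}(y)}\text{ is injective}\}$; this set is open, and on it the fibre can be continuously enumerated in $T$-increasing order (with a fixed definable order as tiebreaker), yielding a trivialisation. Finitely many linear $T$ suffice for the $\operatorname{Split}(T)$ to cover $Y$: pass to an $\aleph_1$-saturated model, pick countably many independent generic $T_i\in(R^n)^*$, and observe that if some $y$ avoided every $\operatorname{Split}(T_i)$ then by pigeonhole some fixed pair $x\neq x'\in f^{-1}(y)$ would satisfy $T_i(x)=T_i(x')$ for infinitely many $i$, forcing $\dim(x,x')$ to be unbounded. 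No triangulation, no path-lifting, no simple connectedness.

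Your route is natural, but the obstacle you flag is a genuine gap rather than a formality. The step ``using definable compactness of $[0,1]$, cover the image of $\gamma_y$ by finitely many trivialising opens'' does not go through as stated: definable compactness extracts finite subcovers only from \emph{uniformly definable} open families, whereas the trivialising atlas of a weak cover is merely an arbitrary collection of individually definable opens --- over a non-archimedean real closed field this distinction matters. One can rescue path-lifting for a \emph{single} definable path by cell-decomposing the one-dimensional set $\{(t,x):f(x)=\gamma(t)\}$, but then making the lifts uniformly definable in $y$, and establishing the homotopy-lifting needed for path-independence of $\sigma_i$, amounts to redeveloping the Edmundo--Otero covering theory for weak covers from scratch. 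In effect your Key Lemma \emph{is} the Galois correspondence for weak covers, which is exactly what the proposition is meant to supply; the proposal does not give an independent argument for it.
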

        \begin{proof} First, we can assume all fibers of $f$ have the same size: by elimination of $\exists^\infty$ in $\mathcal R$, only finitely many distinct fiber sizes occur; and by the definition of trivialization, each size occurs on an open set. So we can separately handle the restriction of $f$ to the fibers of each size.
        
        So, assume all fibers of $f$ have size $k>0$. Say that a \textit{preimage enumerator} is a definable function $g:Y\rightarrow X^k$ such that each $g(y)$ enumerates $f^{-1}(y)$ in some order. Note that if $U\subset Y$ is definable and open, and some preimage enumerator is continuous on $U$, then $f$ definably trivializes over $U$. We will find finitely many preimage enumerators $g_i$, and corresponding definable open sets $U_i\subset Y$, so that each $g_i$ is continuous on $U_i$ and the $U_i$ cover $Y$.

        Suppose $X\subset R^n$, and fix any definable linear ordering $<_0$ on $R^n$ (for example, the lexicographic order).

        \begin{notation}
            Let $T:R^n\rightarrow R$ be any continuous definable map.
            \begin{enumerate}
                \item By $<_T$, we mean the definable linear ordering on $R^n$ given by $x<y$ if $T(x)<T(y)$, with the fixed order $<_0$ used as a tiebreaker.
                \item By $g_T$, we mean the preimage enumerator which enumerates the preimage of each $y\in Y$ in $<_T$-increasing order.
                \item By $\operatorname{Split}(T)$, we mean the \textit{splitting locus} of $T$: the set of $y\in Y$ such that $T$ is injective on $f^{-1}(y)$.  
            \end{enumerate}
        \end{notation}

        The following is clear from the continuity of $f$:
        
        \begin{claim} Let $T:R^n\rightarrow R$ be any definable continuous map. Then $\operatorname{Split}(T)$ is open in $Y$, and $g_T$ is continuous on $\operatorname{Split}(T)$.
        \end{claim}

        To prove Proposition \ref{P: covering equivalence}, we will find finitely many definable continuous maps $T_i:R^n\rightarrow R$ so that the corresponding open sets $\operatorname{Split}(T_i)$ cover $Y$.
        
        In fact, we show that linear maps suffice. Let $\mathcal T$ be the collection of all $R$-linear maps $T:R^n\rightarrow R$. We can view $\mathcal T$ as a definable set of dimension $n$, identifying each $T\in\mathcal T$ as a row vector.

        \begin{claim} There are finitely many $T_i\in\mathcal T$ such that the sets $\operatorname{Split}(T_i)$ cover $Y$.
        \end{claim}
        \begin{proof} It suffices to work in an $\aleph_1$-saturated elementary extension of $\mathcal R$: if we find such maps $T_1,...,T_m$ in an elementary extension, then there is a first-order statement saying that $m$ maps suffice to prove the claim, and this statement also holds in $\mathcal R$.

        So, assume $\mathcal R$ is $\aleph_1$-saturated. Absorbing parameters, assume $f:X\rightarrow Y$ is $\emptyset$-definable. Let $I$ be a countably infinite set, and let $\{T_i:i\in I\}$ be independent generics of $T$. By $\aleph_1$-saturation, it suffices to show that the sets $\operatorname{Split}(T_i)$ cover $Y$.

        Let $y\in Y$, and suppose $y\notin\operatorname{Split}(T_i)$ for all $i$. By pigeonhole, there are $x\neq x'\in f^{-1}(y)$, and an infinite $J\subset I$, so that $T_j(x)=T_j(x')$ for all $j\in J$. So for each $j\in J$, $T_j$ is non-generic in $\mathcal T$ over $xx'$.
        
        Enumerate $J=(j_1,j_2,...)$. Then for each $N$ we get $$\dim(T_{j_1}...T_{j_N}/xx')\leq N\cdot(n-1)=\dim(T_{j_1}...T_{j_N})-N,$$ and thus $\dim(xx')\geq N$. As $xx'$ is fixed and $N$ is arbitrary, this is a contradiction. 
        \end{proof}
        By the claim, we have proven Proposition \ref{P: covering equivalence}.
        \end{proof}

\section{O-minimal Purity of Ramification}

We are now ready to prove purity of ramification. Recall that for a function $f:X\rightarrow Y$ of topological spaces, $\operatorname{Ram}(f)$ is the set of $x\in X$ such that $f$ is non-injective on every neighborhood of $x$.

\begin{proposition}\label{ramification prop} Let $X$ and $Y$ be definable $n$-manifolds, and let $f:X\rightarrow Y$ be a finite-to-one, continuous, definable function. Then either $\operatorname{Ram}(f)=\emptyset$, or $\dim(\operatorname{Ram}(f))\geq n-2$.
		\end{proposition}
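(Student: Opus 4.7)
Suppose for contradiction that $\operatorname{Ram}(f)\neq\emptyset$ and $\dim(\operatorname{Ram}(f))\leq n-3$; pick $x_0\in\operatorname{Ram}(f)$ with $y_0=f(x_0)$. The plan is to reach a contradiction by showing, after local reductions, that $f$ is actually a homeomorphism, and so in particular locally injective at $x_0$.

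First I would pass to a proper local setup in $R^n$. Using the definable atlases, take $X,Y\subset R^n$ open. Shrink $X$ to a small open ball around $x_0$ with definably compact closure in the ambient manifold; using that $f$ is finite-to-one, I can further arrange $f^{-1}(y_0)=\{x_0\}$. Then $y_0\notin f(\partial X)$, so after replacing $Y$ by a small open ball around $y_0$ disjoint from $f(\partial X)$ and $X$ by $X\cap f^{-1}(Y)$, the map $f\colon X\to Y$ becomes \emph{proper}, while $x_0$ remains a ramification point.

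Next, set $N=\operatorname{Ram}(f)$, $B=\overline{f(N)}$, and $X^*=X\setminus f^{-1}(B)$; since $f$ is finite-to-one, $\dim B,\dim f^{-1}(B)\leq n-3$. The two key topological facts are: by Lemma~\ref{manifold stays connected}(1), $X^*$ is definably connected (ball minus a set of codimension $\geq 2$), and by Lemma~\ref{manifold stays connected}(2), $Y\setminus B$ is definably simply connected (codimension $\geq 3$ removed). On $X^*$ the map $f$ is locally injective, so by Fact~\ref{invariance of domain} it is a local homeomorphism. Surjectivity $f(X)=Y$ follows from a short argument combining properness (which makes $f(X)$ closed), invariance of domain (which makes $f(X\setminus N)$ open with $\partial f(X)\subset B$), and connectedness of $Y\setminus B$, by decomposing the latter as the disjoint union of the open sets $f(X)\setminus B$ and $Y\setminus f(X)$. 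Thus $f\colon X^*\to Y\setminus B$ is a proper surjective local homeomorphism, so a weak definable cover, which is a definable cover by Proposition~\ref{P: covering equivalence}, and then a homeomorphism by Fact~\ref{F: galois correspondence}.

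The most delicate step, and the main obstacle, is promoting this homeomorphism on the complement to global injectivity of $f$ on $X$. Suppose $p\neq q$ in $X$ with $f(p)=f(q)=y$; then necessarily $y\in B$. By properness I can pick a small open ball $W\ni y$ and disjoint open neighborhoods $U_p,U_q$ of $p,q$ such that $f^{-1}(W)\subset U_p\sqcup U_q$. Lemma~\ref{manifold stays connected}(1) applied to $W$ ensures $W\setminus B$ is definably connected; pulling back along the homeomorphism from the previous paragraph, $f^{-1}(W\setminus B)\cap X^*$ is definably connected as well. But density of $X^*$ in $X$ near $p$ and $q$ forces both $U_p\cap X^*\cap f^{-1}(W\setminus B)$ and $U_q\cap X^*\cap f^{-1}(W\setminus B)$ to be nonempty, contradicting connectedness. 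Hence $f\colon X\to Y$ is a continuous bijection between open subsets of $R^n$, which Fact~\ref{invariance of domain} promotes to a homeomorphism---contradicting $x_0\in\operatorname{Ram}(f)$. The subtlety concentrated in this final step is that simple-connectedness of the global base $Y\setminus B$ must be leveraged to conclude injectivity over every point of $B$ rather than just over generic points of $Y$; it is precisely the connectedness of $W\setminus B$ (itself a codimension-$\geq 2$ removal) that bridges this gap.
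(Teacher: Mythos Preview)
Your overall strategy is essentially the same as the paper's: localize via a ``good pair'' so that $f$ becomes proper, remove the small image of the ramification locus from the base, show the restriction is a definable cover of a simply connected space, and then push injectivity back over the bad set. The gap is in your claim that $X^*$ is definably connected ``by Lemma~\ref{manifold stays connected}(1) (ball minus a set of codimension $\geq 2$)''. After you replace $X$ by $X_0\cap f^{-1}(Y)$, the domain is no longer a ball: it is the preimage of a ball under $f$, intersected with the original ball $X_0$. There is no reason for this set even to be connected, let alone of the form $R^n$ minus a small set. (For instance, for a generic finite-to-one map, the preimage of a small ball around a value typically has several components near the various preimages; your arrangement $f^{-1}(y_0)\cap\overline{X_0}=\{x_0\}$ controls the fiber over the single point $y_0$, not over all of $Y$.) Since your application of Fact~\ref{F: galois correspondence} to conclude that $f\restriction_{X^*}$ is a homeomorphism requires $X^*$ to be connected, this is a genuine hole, and it is exactly the point where the paper does the real work.

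The paper's fix is to run the covering argument \emph{without} assuming connectedness of the total space: one first gets a decomposition of $U_0\cap f^{-1}(W)$ into sheets $S_1,\ldots,S_m$, each homeomorphic to $W$; one then shows $x_0\in\overline{S_i}$ for every $i$ (using properness and density of $W$), and uses this together with openness of $f$ (which itself requires a separate argument via good pairs, your surjectivity step) to prove that $U_0\cap f^{-1}(V_0)$ is connected. Only then does the codimension-$3$ removal lemma apply to give $m=1$. A shorter patch to your write-up would be to replace $X$ by the connected component of $x_0$ in $X_0\cap f^{-1}(Y)$ and check that properness and surjectivity survive; but you should be aware that Lemma~\ref{manifold stays connected} as stated is only for $R^n$, and the extension to connected definable manifolds (which you would then need) is precisely the content of the paper's Claims~\ref{injective set connected} and~\ref{only one sheet}. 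A minor additional point: in your final paragraph you tacitly assume $f^{-1}(y)=\{p,q\}$ when choosing $U_p,U_q$ with $f^{-1}(W)\subset U_p\sqcup U_q$; in general the fiber can be larger, and you should take neighborhoods of all preimages.
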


  \begin{proof} For shorthand, let us denote $\operatorname{Ram}(f)$ by $N$ and its complement in $X$ by $I$ ($I$ stands for `injective' and $N$ stands for `not injective'). Note that $I$ is open in $X$: if $x\in I$, then $f$ is injective on some open neighborhood $U$ of $x$, and thus each $x'\in U$ belongs to $I$ (also witnessed by $U$). So $X$ is the disjoint union of the open set $I$ and the closed set $N$. 
  
  \begin{assumption}
      \textbf{Throughout the proof of Proposition \ref{ramification prop}, assume that $\dim(N)\leq n-3$.}
    \end{assumption}
    
    We will prove Proposition \ref{ramification prop} by proving that $N=\emptyset$, i.e. $f$ is locally injective everywhere.
	
	\begin{claim}\label{locally injective implies open} The restriction of $f$ to $I$ is an open map.
	\end{claim}
	\begin{proof} Since $I$ is open in $X$, it is also a definable $n$-manifold. Then, since openness can be checked locally, the claim follows by invariance of domain (Fact \ref{invariance of domain}).
	\end{proof}

    Our main tool will be \textit{good pairs} (Definition \ref{good neighborhood pair}):

    \begin{notation} We use $\partial$ to denote topological boundary, i.e. $\partial(X)$ is the closure of $X$ minus the interior of $X$. 
    \end{notation}
    
		\begin{definition}\label{good neighborhood pair} Let $x\in X$, and set $y=f(x)\in Y$. By a \textit{good pair} at $x$, we mean a pair of definable open neighborhoods $U\subset X$ of $x$ and $V\subset Y$ of $y$ satisfying each of the following:
			\begin{enumerate}
				\item $\overline U$ is definably compact and contains no elements of $f^{-1}(y)$ other than $x$.
				\item $V$ is definably homeomorphic to $R^n$.
				\item $f(\partial (U))\cap V=\emptyset$.
				\end{enumerate}
			\end{definition}
		
	\begin{claim}[Existence of Good Pairs]\label{good pairs exist} For each $x\in X$, there is a neighborhood basis of $(x,f(x))$, in $X\times Y$, consisting of sets $U\times V$ such that $(U,V)$ is a good pair at $x$.
		\end{claim}
	\begin{proof} Set $y=f(x)$. Since $X$ and $Y$ are definable $n$-manifolds, we may assume that $X=Y=R^n$. Let $U_0$ be any open neighborhood of $x$, and $V_0$ any open neighborhood of $y$. We will find a good pair $(U,V)$ at $x$ with $U\subset U_0$ and $V\subset V_0$. 
		
		Let $U$ be any bounded open ball around $x$ which is contained in $U_0$. Since $f$ is finite-to-one, the fiber $f^{-1}(y)$ is finite, so we may assume that $\overline U$ is disjoint from $f^{-1}(y)-\{x\}$. Combined with the boundedness of $U$, we attain (1) from Definition \ref{good neighborhood pair}. 
		
		Since $\overline U$ is definably compact, so is $\partial U$, and thus so is $f(\partial (U))$. Since $U$ is a neighborhood of $x$, and $\overline U$ only meets $f^{-1}(y)$ at $x$, it follows that $\partial(U)$ is disjoint from $f^{-1}(y)$, i.e. $y\notin f(\partial(U))$. Since $f(\partial(U))$ is definably compact -- and thus closed -- there is an open ball $V\subset V_0$ at $y$ which is disjoint from $f(\partial U)$. Thus we satisfy (2) and (3) from Definition \ref{good neighborhood pair}, completing the proof. 
		\end{proof}
	
	\begin{remark} The proof of Claim \ref{good pairs exist} shows that whenever $(U,V)$ is a good pair at $x$, the same is true of $(U,V')$ for any open ball $V'\subset V$ at $y$.
		\end{remark}
	
	The next few claims give basic properties of good pairs. The goal is to show that $f$ is open everywhere (Claim \ref{unramified open}).
	
\begin{claim}\label{closed image in good pair} Let $(U,V)$ be a good pair at $x\in X$. Then $f(U)\cap V$ is relatively closed in $V$.
\end{claim}
\begin{proof} By (1) of Definition \ref{good neighborhood pair}, $\overline U$ is definably compact; using this it is easy to check that $\overline{f(U)}=f(\overline U)$. On the other hand, by (3) of Definition \ref{good neighborhood pair} we have that $f(\overline U)\cap V=f(U)\cap V$. In particular, $$f(U)\cap V=\overline{f(U)}\cap V,$$ which shows that $f(U)\cap V$ is relatively closed in $V$.
\end{proof}
\begin{claim}\label{small boundary}
	Let $(U,V)$ be a good pair at $x\in X$. Then the relative boundary of $f(U)\cap V$ in $V$ has dimension at most $n-3$.
\end{claim}
	\begin{proof} Let $B$ be the relative boundary of $f(U)\cap V$ in $V$. By Claim \ref{closed image in good pair}, $B\subset f(U)$. By Claim \ref{locally injective implies open}, $B$ is disjoint from $f(U\cap I)$. Thus $B\subset f(N)$. Since $f$ is finite-to-one, $$\dim (f(N))\leq\dim (N)\leq n-3,$$ which completes the proof.
		\end{proof}
	\begin{claim}\label{good pair locally surjective} Let $(U,V)$ be a good pair at $x\in X$. Then $f(U)\supset V$.
		\end{claim}
	\begin{proof} As in Claim \ref{small boundary}, let $B$ be the relative boundary of $f(U)\cap V$ in $V$. Removing $B$ partitions $V$ into two open subsets: the interior of $f(U)\cap V$, and the complement $V-f(U)$ (the second set is open by Claim \ref{closed image in good pair}). By Definition \ref{good neighborhood pair}, $V$ is definably homeomorphic to $R^n$, so by Claim \ref{small boundary} and Fact \ref{manifold stays connected}, removing $B$ cannot disconnect $V$. Thus, one of $\operatorname{int}(f(U)\cap V)$ and $V-f(U)$ is empty. 
		
		Since $f$ is continuous and finite-to-one, $\dim(f(U)\cap V)=n$. Thus $f(U)\cap V$ has non-empty interior. So $V-f(U)$ is empty, i.e. $f(U)\supset V$.
	\end{proof}

\begin{claim}\label{unramified open} $f$ is an open map.
\end{claim}
\begin{proof} By Claims \ref{good pairs exist} and \ref{good pair locally surjective}.
	\end{proof}

 Recall that we want to show $f$ is locally injective everywhere. For the rest of the proof, fix $x_0\in X$, and set $y_0=f(x_0)$. We will show that in small neighborhoods of $x_0$ and $y_0$, $f$ becomes a definable cover after deleting a codimension 3 set of points. We then use Lemma \ref{manifold stays connected} to show that this cover goes from a definably connected space to a definably simply connected space, and deduce local injectivity from Fact \ref{F: galois correspondence}. 
 
Let us be more precise. Let $(U_0,V_0)$ be any good pair at $x_0$; we will show that $f$ is injective on the neighborhood $U_0\cap f^{-1}(V_0)$ of $x_0$. By Claim \ref{good pair locally surjective}, note that $f(U_0)\supset V_0$.

\begin{notation} For the rest of the proof, we set $$W=\{y\in V_0:U_0\cap f^{-1}(y)\subset I\}.$$
\end{notation}

So $W$ is formed by deleting all fibers that meet $N$. Since $f(U_0)\supset V_0$, we equivalently have $$W=V_0-f(U_0\cap N).$$ The next claims give the key properties of $W$. 

\begin{claim}\label{covering set is small} $W$ is a dense open subset of $V_0$, and $\dim(V_0-W)\leq n-3$.
\end{claim}
\begin{proof} From above, we have $$W=V_0-f(U_0\cap N).$$ By (3) of Definition \ref{good neighborhood pair}, we equivalently have $$W=V_0-f(\overline{U_0}\cap N).$$ Since $\overline{U_0}$ is definably compact, and $N$ is a closed set of dimension at most $n-3$, $\overline{U_0}\cap N$ is both definably compact and of dimension at most $n-3$. Since $f$ is continuous and finite-to-one, we $f(\overline{U_0}\cap N)$ is also definably compact of dimension at most $n-3$. This set is exactly the complement of $W$ in $V_0$, so $W$ is open in $V_0$ and $\dim(V_0-W)\leq n-3$.
	
	Finally, the density of $W$ in $V_0$ is because $\dim(V_0-W)<\dim(V_0)$. To elaborate, let $Z$ be any non-empty open subset of $V_0$. Since $V_0$ is a definable $n$-manifold, we have $\dim (Z)=n$, and thus $\dim(V_0-W)<\dim Z$. So there is an element of $Z$ not belonging to $V_0-W$ -- equivalently, $W\cap Z\neq\emptyset$.
	\end{proof}

\begin{claim} The restriction $f:U_0\cap f^{-1}(W)\rightarrow W$ is a weak definable cover.
	\end{claim}
	\begin{proof} Since $f(U_0)\supset V_0$ (Claim \ref{good pair locally surjective}), the given restriction is automatically surjective.
 
 Fix $y\in W$. Then $U_0\cap f^{-1}(W)$ is a non-empty finite set, say $\{x_1,...,x_m\}$. Let $A_1,...,A_m\subset U_0$ be pairwise disjoint definable open neighborhoods of $x_1,...,x_m$ on which $f$ is injective. By either Fact \ref{invariance of domain} or Claim \ref{unramified open}, the restriction of $f$ to each $A_i$ is a homeomorphism with some open set $B_i\subset N$.
		
	Now the set $C=\overline{U_0}-\bigcup_{i-1}^mA_i$ is definably compact and disjoint from $f^{-1}(y)$; it follows that $f(C)$ is definably compact and does not contain $y$. So, we may choose a definable open neighborhood $Z\subset W$ of $y$ which is disjoint from $f(C)$. Since each $B_i$ is open, we may assume that $Z\subset B_i$ for each $i$. Thus, for each $i$, $f$ gives a homeomorphism from $A_i\cap f^{-1}(Z)$ to $Z$.
	
	On the other hand, since $Z$ is disjoint from $f(C)$, the preimage of $Z$ in $U_0$ is precisely the union of the $A_i\cap f^{-1}(Z)$. In other words, the preimage of $Z$ in $U_0$ consists of $m$ pairwise disjoint homeomorphisms, which proves the claim.
	\end{proof}

We now use the Galois correspondence. By Claim \ref{covering set is small}, $W$ is the complement in $V_0\cong R^n$ of a definable subset of codimension at least 3. So by Proposition \ref{P: covering equivalence} and Lemma \ref{manifold stays connected}(2), we can write $U_0\cap f^{-1}(W)$ as a disjoint union $S_1\cup...\cup S_m$, where each $S_i$ is a definably homeomorphic copy of $W$. 

Our next goal is to show that $m=1$. The idea is that restricting to $f^{-1}(W)$ only (locally) removes a codimension 2 set, so $S_1\cup...\cup S_m$ should still be connected. 

\begin{claim}\label{closure of S} For each $i$, $x_0\in\overline{S_i}$.
\end{claim}
\begin{proof} Since $S_i\subset U_0$, $\overline{S_i}$ is definably compact. This implies that $\overline{f(S_i)}=f(\overline{S_i})$. Now $f(S_i)=W$ is dense in $V_0$ by Claim \ref{covering set is small}, so $y_0\in\overline{f(S_i)}$, and thus $y_0\in f(\overline{S_i})$. Again since $S_i\subset U_0$, and $f^{-1}(y)$ only has one point of $U_0$ (Definition \ref{good neighborhood pair}(1)), the only possibility is that $x_0\in\overline{S_i}$.
	\end{proof}

\begin{claim}\label{injective set connected} $U_0\cap f^{-1}(V_0)$ is a definably connected definable $n$-manifold.
\end{claim}
\begin{proof} That $U_0\cap f^{-1}(V_0)$ is a definable $n$-manifold is clear, since it is open in the definable $n$-manifold $X$. We show that $U_0\cap f^{-1}(V_0)$ is definably connected.
	
Let $x\in U_0\cap f^{-1}(V_0)$, and let $C$ be the connected component of $x$ in $U_0\cap f^{-1}(V_0)$. Since $U_0\cap f^{-1}(V_0)$ is open, so is $C$. So by Claim \ref{unramified open}, $f(C)$ is an open neighborhood of $f(y)\in V_0$. Since $W$ is dense in $V_0$, there is some $y'\in f(C)\cap W$. Let $x'\in C$ with $f(x')=y'$. Since $y'\in W$, we have $x'\in S_i$ for some $i$. But $S_i\cong W$ is definably connected, so by Claim \ref{closure of S} and definable choice it follows that $x'$ and $x_0$ are definably path connected. Since $x,x'\in C$, they are also definably path connected -- thus by concatenating, $x$ and $x_0$ are definably path connected.
	
We have shown that an arbitrary element of $U_0\cap f^{-1}(V_0)$ is definably path connected to $x_0$, which proves the claim. 
\end{proof}
\begin{claim}\label{only one sheet} $m=1$.
	\end{claim}
\begin{proof} Let $T$ be the set of points in $U_0\cap f^{-1}(V_0)$ which do not belong to any $S_i$. In other words, $T=U_0\cap f^{-1}(V_0-W)$. By Claim \ref{covering set is small} and the fact that $f$ is finite-to-one, we have $\dim(T)\leq n-3$. Combined with Claim \ref{injective set connected} and Fact \ref{manifold stays connected}, we conclude that the union of the $S_i$ is definably connected. Since the $S_i$ are pairwise disjoint open sets, the only possibilities are $m=0$ and $m=1$. If $m=0$ then no point in $W$ would have a preimage in $U_0$, contradicting Claim \ref{good pair locally surjective}. So $m=1$. 	
\end{proof}
Finally, we finish the proof of Proposition \ref{ramification prop}:
\begin{claim}\label{conclusion of ramification prop}
	$f$ is injective on $U_0\cap f^{-1}(V_0)$.
\end{claim}
\begin{proof}
	We want to show that each element of $V_0$ has at most one preimage in $U_0$. Fix $y\in V_0$. Then $U_0\cap f^{-1}(y)$ is a finite set, say $\{x_1,...,x_l\}$. For each $i=1,...,l$, choose a good pair $(U_i,V_i)$ at $x_i$. By Claim \ref{good pairs exist}, we may assume that the $U_i$ are pairwise disjoint subsets of $U_0$, and that each $V_i$ is contained in $V_0$.
	
	Let $V=\bigcap_{i=1}^mV_i$. So $V\subset V_0$ is an open neighborhood of $y$ in $N$. Since $W$ is dense in $V_0$, there is some $y'\in V\cap W$. By Claim \ref{only one sheet}, $y'$ has exactly one preimage in $U_0$. By Claim \ref{good pair locally surjective}, $y'$ has at least one preimage in each of $U_1,...,U_l$. Since the $U_i$ are disjoint, this implies $l=1$, as desired.
\end{proof}
	By Claim \ref{conclusion of ramification prop}, the proof of Proposition \ref{ramification prop} is complete.
\end{proof}

\noindent\textit{Acknowledgments:} Thank you to Assaf Hasson for many helpful discussions of the material and for reading earlier drafts of the paper. Thanks also to Tom Scanlon, Rahim Moosa, Ya'acov Peterzil, and Dave Marker for helpful conversations, and to Chieu-Minh Tran for suggestions on terminology.

        \bibliography{references}
        \bibliographystyle{amsalpha}
    \end{document}